\newcommand{\MB}[1]{{\color{purple}{\textbf{MB:} #1 }}}
\providecommand{\customgenericname}{}
\newcommand{\newcustomtheorem}[2]{%
  \newenvironment{#1}[1]
  {%
   \renewcommand\customgenericname{#2}%
   \renewcommand\theinnercustomgeneric{##1}%
   \innercustomgeneric
  }
  {\endinnercustomgeneric}
}
\newcommand{\PP}{\mathbb{P}}
\newcommand{\RR}{\mathbb{R}}
\newcommand{\CC}{\mathbb{C}}
\newcommand{\NN}{\mathbb{N}}
\newcommand{\QQ}{\mathbb{Q}}
\newcommand{\Set}[1]{\left\{#1\right\}}
\newcommand{\p}[1]{\mathbb{P}^{#1}}
\DeclareMathOperator{\PGL}{PGL}
\theoremstyle{plain}
\newtheorem{theorem}{Theorem}[section] 
\newtheorem{proposition}[theorem]{Proposition}
\newtheorem{lemma}[theorem]{Lemma}
\newtheorem{example}[theorem]{Example}
\newtheorem{remark}[theorem]{Remark}
\begin{document}
\title{Compatibility of Fundamental Matrices for Complete Viewing Graphs}

\author{Martin Bråtelund\\
University of Oslo\\
Moltke Moes vei 35, 0851 Oslo, Norway\\
{\tt\small mabraate@math.uio.no}
\and
Felix Rydell\\
KTH Royal Institute of Technology\\
Lindstedtsvägen 25, Stockholm, Sweden\\
{\tt\small felixry@kth.se}
}

\maketitle

\begin{abstract} 

This paper studies the problem of recovering cameras from a set of fundamental matrices. A set of fundamental matrices is said to be compatible if a set of cameras exists for which they are the fundamental matrices. We focus on the complete graph, where fundamental matrices for each pair of cameras are given. Previous work has established necessary and sufficient conditions for compatibility as rank and eigenvalue conditions on the $n$-view fundamental matrix obtained by concatenating the individual fundamental matrices. In this work, we show that the eigenvalue condition is  redundant in the generic and collinear cases. We provide explicit homogeneous polynomials that describe necessary and sufficient conditions for compatibility in terms of the fundamental matrices and their epipoles. In this direction, we find that quadruple-wise compatibility is enough to ensure global compatibility for any number of cameras. We demonstrate that for four cameras, compatibility is generically described by triple-wise conditions and one additional equation involving all fundamental matrices.

\end{abstract}


\section*{Introduction} 

The problem of finding camera matrices that correspond to a given set of fundamental matrices is crucial in 3D reconstructions from 2D images. Typically, multiview structure-from-motion pipelines start by estimating fundamental matrices from point correspondences, with early methods for such estimations dating back to the 1990s and new methods still being developed today \cite{torr1995motion,xu1996epipolar,torr1997development,ranftl2018deep}. However, these methods usually only estimate a subset of all possible fundamental matrices between cameras. To describe this incomplete set of fundamental matrices, viewing graphs are often used \cite{levi2003viewing}.


In this paper, we focus on understanding the conditions under which a reconstruction of $n$ cameras can be obtained given complete knowledge of ${n\choose 2}$ fundamental matrices, but we also give a result for general graphs at the end. Here, a \emph{camera} refers to a full-rank $3\times 4$ matrix, and the \emph{fundamental matrix} of two cameras $P_1$ and $P_2$ with distinct kernels is a $3\times 3$ rank-2 matrix that encodes all point correspondences between them. For any given rank-2 $3\times 3$ matrix $F^{12}$, there exists a pair of cameras $P_1$ and $P_2$ for which $F^{12}$ is the fundamental matrix, this pair is unique up to global projective transformation. However, for a set of ${n\choose 2}$ rank-2 $3\times 3$ matrices $F^{ij}$, where $n>2$, it is not guaranteed that there exist cameras $P_1,\ldots,P_n$ such that $F^{ij}$ is the fundamental matrix of $P_i$ and $P_j$ for each $i,j$. Following the notation of \cite{Hartley2004} we say that the set ${F^{ij}}$ is \emph{compatible} if such cameras do exist. Note that some recent literature uses the term \emph{consistent} instead \cite{kasten2019gpsfm}.

Finding necessary and sufficient conditions for compatibility of fundamental matrices has practical applications as well as theoretical ones. \cite{kasten2019gpsfm} proposes an algorithm for projective structure-from-motion that employs their necessary and sufficient condition for compatibility. The algorithm is designed to handle collections of measured fundamental matrices, both complete and partial, and aims to find camera matrices that minimize a global algebraic error for the given set of matrices. As for theoretical purposes, \cite{braatelund2021critical,braatelund2021threeviews,HK} uses necessary and sufficient conditions for compatibility to give a classification of critical configurations. 


In the case of $n=3$, a classical result \cite[Section 15.4]{Hartley2004} provides triple-wise constraints on $F^{12},F^{13},F^{23}$ in terms of the fundamental matrices and their epipoles, where the $i$-th epipole in the $j$-th image is defined as $e_j^i:=\ker F^{ij}$. For non-collinear cameras, \cite[Theorem 1]{kasten2019gpsfm} provides necessary and sufficient conditions for compatibility for any $n$. These conditions rely on the eigenvalues and rank of the $n$-view fundamental matrix, which is obtained by stacking all fundamental matrices into a $3n\times 3n$ matrix. In the follow-up work, \cite[Theorem 2]{geifman2020averaging} arrives at a similar condition in the collinear case. Both methods rely on fixing a correct scaling of each matrix and are therefore not projectively well-defined, nor are the conditions expressed in terms of the fundamental matrices and their epipoles, as in the $n=3$ case.

The contributions of this paper include giving explicit homogeneous polynomials that provide necessary and sufficient conditions for the compatibility of fundamental matrices in the case of complete graphs. The paper is structure as follows. In \Cref{s: Action}, we introduce the fundamental action, a key tool in simplifying the problem of finding compatibility conditions. In \Cref{s: Kn}, for the case of $n=4$, we establish that a set of six fundamental matrices admits a reconstruction of camera matrices with linearly independent centers only if the triple-wise constraints and one additional polynomial equation involving all six fundamental matrices and their epipoles are satisfied. We also demonstrate, using the computer algebra system \texttt{Macaulay2} \cite{M2}, that the eigenvalue conditions from \cite{kasten2019gpsfm, geifman2020averaging} are superfluous in the generic case and in the case where all epipoles in each image coincide. \Cref{s: Can} presents a necessary and sufficient condition for compatibility for any viewing graph via a cycle condition, similar to cycle-based formulations of parallel rigidity that appear in the calibrated case. Finally, in \Cref{s: Image}, we discuss the image of the fundamental map and prove a first result on this topic. 

We approach compatibility of fundamental matrices from an algebraic point of view, i.e., we aim to describe constraints through algebraic equations and polynomial equations using techniques and software from applied algebraic geometry. This approach to questions in computer vision has a long standing tradition \cite{heyden1997algebraic,trager2015joint,agarwal2019ideals,duff2019plmp,kileel2022snapshot}.

\subsection*{Related work} 

\paragraph{History.} The problem of determining whether a set of fundamental matrices is compatible has a curious history. \cite{Hartley2004} provided a necessary and sufficient triple-wise condition for the compatibility of three fundamental matrices $F^{12},F^{13}$ and $F^{23}$ arising from three cameras with non-collinear centers. In 2007, the paper \cite[Theorem 2.2]{HK} claimed that this condition was sufficient for compatibility even in the case of cameras with collinear centers, a claim that we show to be false in \Cref{ex:counterexample1}. During the next decade, few advances were made in understanding compatibility. Over time, a belief seemed to develop that triple-wise compatibility was enough to ensure global compatibility. In fact, articles such as \cite[Section 2.1]{falseclaim2} claimed this to be true, based on a faulty proof provided in \cite{rudi2011linear}. In 2018, \cite[Section 3.3]{trager2018solvability} pointed out that the proof in \cite{rudi2011linear} fails in some cases, but still agreed that the result holds for complete graphs. \Cref{ex:counterexample2} shows that this is not the case by providing a counterexample.

\paragraph{Essential matrices.} In the context of \emph{uncalibrated cameras}, which are defined as full-rank $3\times 4$ matrices, this work, as well as \cite{kasten2019gpsfm}, provide necessary and sufficient conditions for compatibility of fundamental matrices. However, camera matrices are often assumed to be \emph{calibrated}, represented in the form of $[R|t]$ for a rotation matrix $R$ and a translation vector $t$. The corresponding fundamental matrices are called \emph{essential matrices}. In \cite{kasten2019algebraic}, the authors build upon their previous work and provide a necessary and sufficient condition for compatibility of essential matrices, in terms of the $n$-view essential matrix obtained by stacking all essential matrices into a larger matrix. This condition is then used to recover a consistent set of essential matrices, given a partial set of measured essential matrices. In \cite{martyushev2020necessary}, Martyushev provides a necessary and sufficient condition for compatibility of three essential matrices.


\paragraph{Solvability.} There has been extensive research on the topic of solvability of viewing graphs in computer vision, as evidenced by various studies such as \cite{levi2003viewing,rudi2011linear,nardi2011augmented,trager2015joint,trager2018solvability,arrigoni2021viewing,arrigoni2022revisiting}. A viewing graph is considered \emph{solvable} if, given a generic set of cameras, their fundamental matrices have a unique solution in terms of cameras up to global projective transformation. Recently, \cite{arrigoni2021viewing} proposed a new formulation of solvability and developed an effective algorithm for testing it.

The primary distinction between solvability and compatibility lies in the fact that, in the latter, the existence of cameras that correspond to a set of fundamental matrices is not assumed to exist. Moreover, compatibility has mostly been studied for graphs where each possible fundamental matrix is given, whereas papers on solvability study viewing graphs without such restrictions.

Furthermore, solvability has been investigated in the case of calibrated cameras, where it is known that the solvable graphs are precisely those that are parallel rigid \cite{ozyesil2015robust,sattler2016efficient}.

\begin{small}
\paragraph{\begin{small}
    Acknowledgements.
\end{small}} The authors would like to thank Kathlén Kohn, Kristian Ranestad, Timothy Duff, and Paul Breiding for helpful discussions, and Erin Connelly for pointing out a sign error in one of our proofs. Martin Bråtelund was supported by the Norwegian National Security Authority. Felix Rydell was supported by the Knut and Alice Wallenberg Foundation within their WASP (Wallenberg AI, Autonomous Systems and Software Program) AI/Math initiative.
\end{small}

\setcounter{section}{0}
\section{Preliminaries}\label{s: Pre} In this section we recall established notation and results, as well as concepts of algebraic geomety in \Cref{ss: alg geo}. We work over the real numbers, although all results in this paper either directly hold in the complex case or can be reformulated to do so. Where slight adjustments have to be made over the complex numbers, we make a remark.  

Let $\RR^n$ denote the set of real vectors with $n$ coordinates, we call this \emph{affine space}. Let $\PP^{n-1}$ denote its projectivization. We write $\RR^{n\times m}$ to denote the set of real $n\times m$ matrices, and we write $\PP^{n\times m}$ to denote the set of real projective $n\times m$ matrices.

We define a rational map (this notion is formally defined in \Cref{ss: alg geo})
\begin{equation}
    \psi : \PP^{3\times 4}\times \PP^{3\times 4}\dashrightarrow \PP^{3\times 3},
\end{equation}
as follows. Given a pair of $3\times 4$ matrices ${P}_1$ and ${P}_2$ (defined up to scale), let $\textbf{x}$ and $\textbf{y}$ be two $3\times1$ vectors. The determinant
\begin{align}\label{eq: det}
\det\begin{bmatrix}
{P}_1&\textbf{x}&0\\
{P}_2&0&\textbf{y}
\end{bmatrix}
\end{align}
is a bilinear polynomial with in $\textbf{x}$ and $\textbf{y}$, meaning there is a matrix ${F}^{12}$ (defined up to scale) such that (\ref{eq: det}) can be written as $\textbf{x}^T{F}^{12}\textbf{y}$. We define $\psi({P}_1,{P}_2)$ to be this $3\times 3$ matrix. This map is undefined, i.e. $\psi(P_1,P_2)=0$, precisely when $\ker P_1\cap \ker P_2\neq \{0\}$. 

We refer to rank-2 $3\times 3$ matrices as \textit{fundamental matrices} (either in $\RR^{3\times 3}$ or $\PP^{3\times 3}$) and we refer to rank-3 $3\times 4$ matrices as \emph{cameras} (either in $\RR^{3\times 4}$ or $\PP^{3\times 4}$).  The \emph{center} of a camera $P$ is its kernel $\ker P$. 
Before we list a set of well-known results, partly found in \cite[Section 9]{Hartley2004}, we recall that $\mathrm{GL}_n$ denotes the set of invertible $n\times n$ matrices and that $\mathrm{PGL}_n$ is its projectivization.

\begin{proposition}\label{prop:properties_of_the_fundamental_matrix} $ $

\begin{enumerate} 
      \item $\psi(P_1,P_2)$ is of rank at most 2, and it attains this rank if $P_1,P_2$ are cameras with distinct centers;
    \item for any fundamental matrix $F^{12}$, there exist two cameras $P_1,P_2$ such that $F^{12}$ is their fundamental matrix. All other cameras $C_1,C_2$ with fundamental matrix $F^{12}$ satisfy $C_1=P_1H,C_2=P_2H$ for some $H\in \mathrm{PGL}_4$;
    \item $\psi({P}_2,{P}_1)=\psi({P}_1,{P}_2)^T$;
    \item if $F^{12}$ is the fundamental matrix of $P_1,P_2$, then $\ker F^{12}=P_2 \ker(P_1)$;
    \item for cameras $P_1,P_2$, we have $F^{12}=\psi(P_1,P_2)$ if and only if $P_1^TF^{12}P_2$ is a skew-symmetric matrix.
\end{enumerate}
\end{proposition}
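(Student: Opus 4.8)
The plan is to derive all five statements directly from the defining determinant \eqref{eq: det} by elementary multilinear algebra, in the order (3), then (1) and (4), then (2), and finally (5). For (3): swapping the two block rows of the $6\times 6$ matrix in \eqref{eq: det} multiplies its determinant by $(-1)^{3\cdot 3}=-1$, and then swapping the last two appended columns multiplies it by $-1$ once more; the two signs cancel, and the resulting determinant is exactly the one defining $\psi(P_2,P_1)$ read off at the pair $(\mathbf{y},\mathbf{x})$, so $\psi(P_2,P_1)=\psi(P_1,P_2)^T$. For the rank bound in (1): given $\mathbf{c}\in\ker P_1$, the combination of the first four columns with coefficient vector $\mathbf{c}$ is the vector stacking $0\in\RR^3$ over $P_2\mathbf{c}$, so setting $\mathbf{y}=P_2\mathbf{c}$ makes the last column of \eqref{eq: det} a combination of the first four and the determinant vanishes. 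Thus either $P_2\mathbf{c}$ is a nonzero element of $\ker\psi(P_1,P_2)$ or $\psi(P_1,P_2)=0$, and in either case $\rank\psi(P_1,P_2)\le 2$; symmetrically $P_1(\ker P_2)$ lies in the left kernel. If moreover $P_1,P_2$ are cameras with distinct centers, $\ker P_1$ is a line not contained in $\ker P_2$, so $P_2(\ker P_1)$ is a line, and once the rank is known to be exactly $2$ (next paragraph) this gives $\ker\psi(P_1,P_2)=P_2(\ker P_1)$, which is (4).

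For the remaining rank claim in (1) and for (2), I would pass to the normal form $P_1=[I\mid 0]$. This is legitimate because a camera $P_1$ has rank $3$, so there is $H\in\PGL_4$ with $P_1H=[I\mid 0]$; right-multiplying both cameras by $H$ only scales the determinant \eqref{eq: det} by $\det H$ (append $H\oplus I_2$ on the right), so $\psi$ is unchanged up to scale, and we may write $P_2H=[A\mid\mathbf{b}]$. A direct expansion of \eqref{eq: det} then gives $\psi([I\mid 0],[A\mid\mathbf{b}])\sim A^T[\mathbf{b}]_\times$, whose kernel is $\{\mathbf{z}:[\mathbf{b}]_\times\mathbf{z}\in\ker A^T\}$. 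Since $\ker A^T=(\IM A)^\perp$, $\IM[\mathbf{b}]_\times=\mathbf{b}^\perp$, and $\IM A+\langle\mathbf{b}\rangle=\IM P_2=\RR^3$ (because $P_2$ has rank $3$), this kernel is exactly $\langle\mathbf{b}\rangle$ whenever $\mathbf{b}\ne 0$, i.e.\ exactly when the two centers differ; hence the rank is $2$, finishing (1). For existence in (2) I would use the classical canonical construction: for a rank-$2$ matrix $F$ with $\ker F=\langle\mathbf{e}\rangle$, the pair $P_1=[I\mid 0]$, $P_2=[\,[\mathbf{e}]_\times F^T\mid\mathbf{e}\,]$ satisfies $\psi(P_1,P_2)\sim F$ (checked using $[\mathbf{e}]_\times^2=\mathbf{e}\mathbf{e}^T-\|\mathbf{e}\|^2 I$ and $F\mathbf{e}=0$), and $P_2$ has rank $3$ since $[\mathbf{e}]_\times F^T$ has rank $2$ with image inside $\mathbf{e}^\perp$ while $\mathbf{e}\notin\mathbf{e}^\perp$.

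For the uniqueness part of (2): if cameras $C_1,C_2$ also have fundamental matrix $F$, first choose $H$ with $C_1H=[I\mid 0]$ to reduce, as above, to $C_1=P_1=[I\mid 0]$. The only remaining freedom is right-multiplication by the stabilizer of $[I\mid 0]$ in $\PGL_4$, namely the matrices $\begin{bmatrix} I & 0 \\ \mathbf{v}^T & \lambda\end{bmatrix}$ with $\lambda\ne 0$. Writing $P_2=[A\mid\mathbf{b}]$ and $C_2=[A'\mid\mathbf{b}']$, the equality $\psi(C_1,C_2)=\psi(P_1,P_2)$ forces first $\mathbf{b}'\parallel\mathbf{b}$ (both span the epipole, i.e.\ the left kernel of $F$), and then $[\mathbf{b}]_\times(\mu A'-A)=0$ for the appropriate scalar $\mu$; hence every column of $\mu A'-A$ lies in $\ker[\mathbf{b}]_\times=\langle\mathbf{b}\rangle$, which produces the vector $\mathbf{v}$ and scalar $\lambda$ realising $C_2=P_2\begin{bmatrix} I & 0 \\ \mathbf{v}^T & \lambda\end{bmatrix}$.

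Finally, for (5): write $\mathbf{a}_k$ for the $k$-th column of $P_1$ and $\mathbf{b}_l$ for the $l$-th column of $P_2$, so $(P_1^T F^{12}P_2)_{kl}=\mathbf{a}_k^T F^{12}\mathbf{b}_l$. When $F^{12}=\psi(P_1,P_2)$ this equals $\det\begin{bmatrix}P_1 & \mathbf{a}_k & 0 \\ P_2 & 0 & \mathbf{b}_l\end{bmatrix}$; subtracting the $k$-th column from the fifth turns the top-right block into $0$, and a Laplace expansion along the first three rows gives $(P_1^T F^{12}P_2)_{kl}=-\sum_m(-1)^m\det(P_1\text{ without column }m)\,\det[\mathbf{b}_m\mid\mathbf{b}_k\mid\mathbf{b}_l]$, which is antisymmetric in $k$ and $l$; hence $P_1^T F^{12}P_2$ is skew-symmetric. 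For the converse, reduce once more to $P_1=[I\mid 0]$, $P_2=[A\mid\mathbf{b}]$ with $\mathbf{b}\ne 0$; then $P_1^T F P_2=\begin{bmatrix} FA & F\mathbf{b} \\ 0 & 0\end{bmatrix}$ is skew-symmetric iff $FA$ is skew-symmetric and $F\mathbf{b}=0$, and a short case split on whether $\rank A$ equals $3$ or $2$ (using $\mathbf{b}\notin\IM A$ in the latter) shows this linear system in $F$ has a one-dimensional solution space, which must be $\langle\psi(P_1,P_2)\rangle$ since by (1) that matrix is a nonzero solution; therefore $F=\psi(P_1,P_2)$ in $\PP^{3\times 3}$. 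I expect the two genuinely non-routine points to be the uniqueness argument in (2) — correctly bookkeeping the residual projective freedom after normalizing the first camera — and the converse in (5), where extra solutions must be excluded in the degenerate case $\rank A=2$; the main pitfall elsewhere is sign-tracking, both in the block permutations for (3) and in the Laplace expansion for (5).
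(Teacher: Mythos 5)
Your proof is correct. Note that the paper does not actually prove this proposition: it is stated as a list of well-known facts with a pointer to \cite[Section 9]{Hartley2004}, so there is no in-paper argument to compare against; your self-contained derivation from the determinantal definition of $\psi$ is a legitimate (and more elementary) substitute. The key computations check out: the sign bookkeeping in (3), the normal form $\psi([I\mid 0],[A\mid\mathbf{b}])\sim A^T[\mathbf{b}]_\times$ (which is the right one for the paper's convention $\mathbf{x}^TF^{12}\mathbf{y}$), the canonical pair in (2) via $[\mathbf{e}]_\times^2=\mathbf{e}\mathbf{e}^T-\|\mathbf{e}\|^2I$ and $F\mathbf{e}=0$, and the Laplace expansion giving skew-symmetry in (5). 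The two delicate points you flagged are also handled correctly: the residual stabilizer of $[I\mid 0]$ in the uniqueness part of (2), and the dimension count in the converse of (5) --- in the case $\rank A=2$ the condition that $S=FA$ be skew-symmetric with $S\mathbf{u}=0$ for $\mathbf{u}$ spanning $\ker A$ leaves a one-dimensional space of $S$, and $\mathbf{b}\notin\IM A$ lets you extend each such $S$ to a unique $F$ with $F\mathbf{b}=0$. Two cosmetic remarks only: in the uniqueness argument, under the paper's convention $\mathbf{b}$ spans the \emph{right} kernel $\ker F^{12}$ (you call it the left kernel), which does not affect the argument; and the converse in (5) implicitly assumes distinct centers (equivalently that $F^{12}$ has rank $2$), since otherwise $\psi(P_1,P_2)=0$ and the statement degenerates --- this matches the intended reading of the proposition.
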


We say that a set of fundamental matrices $\{F^{ij}\}$ is \emph{compatible} if there are cameras $P_1,\ldots,P_n$ such that $F^{ij}=\psi(P_i,P_j)$. The cameras $P_1,\ldots,P_n$ are called a \emph{solution} to $F^{ij}$. We mostly focus on complete viewing graphs, i.e. when $\Set{F^{ij}}$ contains all $n\choose 2$ fundamental matrices for $n$ indices. Still, in \Cref{s: Can}, we provide a result that holds not only in this setting, but for any viewing graph.

We define the $i$-th \emph{epipole} $e_j^i$ in the $j$-th image to be an affine representative of $\ker F^{ij}$. By \Cref{prop:properties_of_the_fundamental_matrix} 4., $e_j^i$ is the image of the $i$-th camera center taken by the $j$-th camera.

\begin{lemma}[{\hspace{1sp}\cite[Section 15.4]{Hartley2004}}]
\label{lem:triple_has_unique_solution}
    Let $\Set{F^{12},F^{13},F^{23}}$ be compatible. There is a unique solution if and only if the two epipoles in each image are distinct.
\end{lemma}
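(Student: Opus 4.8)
The plan is to translate the epipole condition into a condition on the camera centers and then handle the two implications separately. Write $C_i:=\ker P_i$ for the center of the $i$-th camera in a solution $(P_1,P_2,P_3)$, and note that $\psi(PH,QH)=\det(H)\,\psi(P,Q)$, which follows by right-multiplying the matrix in the definition of $\psi$ by $\diag(H,I_2)$. By \Cref{prop:properties_of_the_fundamental_matrix}(4), $e_j^i=\ker F^{ij}=P_jC_i$, and since each $F^{ij}$ has rank $2$ the three centers are pairwise distinct; for $\{i,j,k\}=\{1,2,3\}$ the points $e_j^i,e_j^k$ coincide exactly when $P_j(C_i-\lambda C_k)=0$ for some $\lambda$, i.e. when $C_i-\lambda C_k\in\langle C_j\rangle$, i.e. when $C_1,C_2,C_3$ are linearly dependent. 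As this last condition is symmetric in $1,2,3$, the two epipoles coincide in one image precisely when they coincide in every image, namely exactly when $C_1,C_2,C_3$ are collinear. So it suffices to prove that the solution is unique up to $\PGL_4$ if and only if $C_1,C_2,C_3$ are not collinear.

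Suppose $C_1,C_2,C_3$ are linearly independent and let $(P_1,P_2,P_3)$, $(Q_1,Q_2,Q_3)$ be solutions. By \Cref{prop:properties_of_the_fundamental_matrix}(2) applied to each of the three pairs there are $H_{12},H_{13},H_{23}\in\PGL_4$ with $Q_i=P_iH_{ab}$ whenever $i\in\{a,b\}$. Equating the two expressions for $Q_1$ gives $P_1H_{12}=\mu_1P_1H_{13}$ for some $\mu_1\neq0$, so $P_1(H_{12}-\mu_1H_{13})=0$; since $\ker P_1=\langle C_1\rangle$, every column of $H_{12}-\mu_1H_{13}$ is a multiple of $C_1$, i.e. $H_{12}=\mu_1H_{13}+C_1w_1^{T}$ for some $w_1\in\RR^4$, and similarly $H_{13}=\mu_3H_{23}+C_3w_3^{T}$, $H_{12}=\mu_2H_{23}+C_2w_2^{T}$. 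Eliminating $H_{12},H_{13}$ yields $(\mu_2-\mu_1\mu_3)H_{23}=\mu_1C_3w_3^{T}+C_1w_1^{T}-C_2w_2^{T}$; the right side has column span inside $\Span(C_1,C_2,C_3)$, of dimension at most $3$, while $H_{23}$ is invertible, so $\mu_2=\mu_1\mu_3$ and the right side vanishes. Left-multiplying $C_1w_1^{T}-C_2w_2^{T}+\mu_1C_3w_3^{T}=0$ by the basis of $(\RR^4)^{*}$ dual to an extension of $C_1,C_2,C_3$ to a basis of $\RR^4$ forces $w_1=w_2=w_3=0$, hence $H_{12}=\mu_1H_{13}$, so $H_{12}=H_{13}$ in $\PGL_4$; calling this common element $H$, we get $Q_i=P_iH$ for all $i$, i.e. uniqueness.

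Now suppose $C_1,C_2,C_3$ are collinear, and fix a solution $(P_1,P_2,P_3)$. After a projective change of coordinates -- which does not affect the uniqueness assertion -- we may assume $C_1=[e_1]$, $C_2=[e_2]$, $C_3=[e_1+e_2]$, since three distinct collinear points can always be sent to these (by $3$-transitivity of $\PGL_2$ on a line). Then column $1$ of $P_1$ is zero, column $2$ of $P_2$ is zero, and columns $1$ and $2$ of $P_3$ are opposite. For $t\in\RR^{*}$ set $H_t:=\diag(t,1,1,1)$ and consider $(P_1,P_2H_t,P_3)$. Scaling the zero first column of $P_1$ changes nothing, so $P_1H_t=P_1$ and $\psi(P_1,P_2H_t)=\psi(P_1H_t,P_2H_t)=t\,F^{12}$; also $\psi(P_1,P_3)=F^{13}$ is untouched; and in the $6\times6$ determinant defining $\psi(P_2H_t,P_3)$, adding the second column to the first produces a matrix whose first column is $t$ times the corresponding column for $\psi(P_2,P_3)$ (using that $P_2$ has zero second column and $P_3$ has opposite first two columns) while all other columns agree with those for $\psi(P_2,P_3)$, so $\psi(P_2H_t,P_3)=t\,F^{23}$. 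Projectively, $(P_1,P_2H_t,P_3)$ is therefore again a solution for every $t$. Finally, if $(P_1,P_2H_t,P_3)=(P_1H,P_2H,P_3H)$ for some $H\in\PGL_4$ then $H$ fixes both $P_1$ and $P_3$ as cameras; since $P_iH=\lambda_iP_i$ means $H$ preserves the fibres of $P_i$, a projectivity fixing two cameras with distinct centers $c\neq c'$ sends each point $z$ into $\overline{cz}\cap\overline{c'z}$, hence fixes every point off the line $\overline{cc'}$, hence is the identity. Thus $H=\mathrm{id}$ and $P_2H_t=P_2$, which forces $t=1$; so for any $t\neq1$ the triple $(P_1,P_2H_t,P_3)$ is a solution not $\PGL_4$-equivalent to $(P_1,P_2,P_3)$, and the solution is not unique.

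The two pieces of linear algebra -- the elimination in the first implication and the column operation in the second -- are routine; the crux of the proof is the construction in the second implication. What makes it work is the normalization of the collinear centers to $[e_1],[e_2],[e_1+e_2]$, which aligns the vanishing-column patterns of $P_1,P_2,P_3$ so that rescaling a single coordinate rescales all three fundamental matrices (hence preserves them projectively) while demonstrably moving $P_2$ out of its $\PGL_4$-orbit. Identifying this normalization together with the one-parameter deformation $H_t$ is the one step that is not mechanical.
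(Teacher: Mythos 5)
The paper does not prove this lemma; it is imported verbatim from \cite[Section 15.4]{Hartley2004}, so there is no in-text argument to compare against. Your proof is correct and self-contained. The reduction of the epipole condition to linear (in)dependence of the three centers via $e_j^i=P_jC_i$ is right, and the observation that collinearity is symmetric in the indices correctly shows the epipoles degenerate in one image iff in all. The uniqueness direction is a clean elimination: writing the three pairwise $\PGL_4$-ambiguities as $H_{12}=\mu_1H_{13}+C_1w_1^T$ etc.\ and using that $\Span(C_1,C_2,C_3)$ has dimension $3<4=\rank H_{23}$ forces all the rank-one corrections to vanish, so the three transformations agree projectively. The non-uniqueness direction is the essential content, and your one-parameter family $(P_1,P_2H_t,P_3)$ with $H_t=\diag(t,1,1,1)$ works: after normalizing the centers to $[e_1],[e_2],[e_1+e_2]$, the vanishing/opposite-column structure of the three cameras makes each $\psi$ scale by $t$ (your column operation for $\psi(P_2H_t,P_3)$ checks out), while the rigidity of a projectivity fixing two cameras with distinct centers pins $H=\mathrm{id}$ and hence rules out $\PGL_4$-equivalence for $t\neq1$. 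This matches the standard Hartley--Zisserman argument in spirit (the collinear ambiguity there is likewise exhibited as a one-parameter family of third-camera reconstructions), but your version is more explicit and verifiable directly from \Cref{prop:properties_of_the_fundamental_matrix}. No gaps.
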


Although fundamental matrices and epipoles are only defined up to scale, i.e. as elements in projective space, we always assume for convenience that we are given affine representatives of them and that the representatives of fundamental matrices satisfy $(F^{ij})^T=F^{ji}$, unless otherwise is specified.

Given a fixed set of fundamental matrices $F^{ij}$, we point out that there is a rather simple method of finding possible solutions in terms of cameras by first using $F^{12}$ to recover $P_1,P_2$ and then using \Cref{lem:triple_has_unique_solution} with matrices $\Set{F^{12},F^{1i},F^{2i}}$ to recover the remaining $P_i$ (a detailed algorithm can be found in \cite[Section 6.1]{HK}). Finding explicit equations in terms of the fundamental matrices and epipoles for compatability is however more difficult, and is the subject of this paper.


\subsection{Methods of algebraic geometry} \label{s: Prelim App} \label{ss: alg geo}

For this paper, it is helpful to understand saturation and elimination of \emph{ideals}. We refer the reader to \cite{Gathmann} for the basics on algebraic geometry and \cite{cox1994ideals} for a detailed study of these topics.  Consider a field $k$ and its polynomial ring $k[x]=k[x_1,\ldots,x_m]$; the set of all polynomials with coefficients in $k$. That $k[x]$ is a \emph{ring} means that addition and multiplication of polynomials satisfy a certain set of axioms that we don't list here. An \emph{ideal} $I$ of a ring $R$ is an additive subgroup that is closed under multiplication of elements in $R$. 

Let $f_1,\ldots,f_s\in k[x]$ be polynomials. They generate an ideal of $k[x]$ as follows:
\begin{align}
    \langle f_1,\ldots,f_s\rangle:=\big\{\sum g_if_i: g_i\in k[x]\big\}\subseteq k[x].
\end{align}
From the geometric point of view, an ideal in a polynomial ring defines a \emph{variety} $\mathcal V$ as the zero set of all polynomials in the ideal. In other words,
\begin{align}
    \mathcal V(I):=\big\{ x\in k^m: f(x)=0\;\forall f\in I\big\}.
\end{align}
The Zariski closure $\overline{U}$ of a set $U\subseteq k^m$ is the smallest variety $X$ that contains $U$. 

The goal of saturation is to remove unwanted components from a variety. Let $I,J$ be ideals. The \emph{saturation} of $I$ with respect to $J$ is
\begin{align}\begin{aligned}
    I:J^\infty:=\big\{f\in k[x]: &\forall g\in J,\; \exists N\in \NN 
    &\textnormal{such that } fg^N\in I\big\}.   
\end{aligned}
\end{align}
It follows from definition that $I:J^\infty=I:(I+J)^\infty$, where $I+J=\{g+f:g\in I,f\in I\}$, and therefore we may assume without restriction that $I\subseteq J$.

\begin{theorem}[{\hspace{1sp}\cite[p. 203]{cox1994ideals}}]\label{thm: sat} Let $\mathcal{V}(J),\mathcal{V}(I)$ be two varieties over any field $k$. Then
\begin{align}
    \overline{\mathcal{V}(I)\setminus \mathcal{V}(J)}\subseteq \mathcal{V}(I:J^\infty).
\end{align}
\end{theorem}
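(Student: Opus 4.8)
The plan is to prove the stronger, pointwise inclusion $\mathcal{V}(I)\setminus\mathcal{V}(J)\subseteq\mathcal{V}(I:J^\infty)$ first, and then obtain the statement about Zariski closures immediately from the fact that $\mathcal{V}(I:J^\infty)$ is already a closed set.

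For the pointwise inclusion I would argue as follows. Fix a point $p\in\mathcal{V}(I)\setminus\mathcal{V}(J)$ and an arbitrary polynomial $h\in I:J^\infty$; the goal is to show $h(p)=0$. Since $p\notin\mathcal{V}(J)$, there is some $g\in J$ with $g(p)\neq 0$. Applying the definition of the saturation to this particular $g$, there is an exponent $N\in\NN$ with $hg^N\in I$. Because $p\in\mathcal{V}(I)$, this gives $h(p)\,g(p)^N=(hg^N)(p)=0$. As $k$ is a field, hence an integral domain, $g(p)\neq 0$ forces $g(p)^N\neq 0$, so we conclude $h(p)=0$. Since $h$ was an arbitrary element of $I:J^\infty$, this shows $p\in\mathcal{V}(I:J^\infty)$, and hence $\mathcal{V}(I)\setminus\mathcal{V}(J)\subseteq\mathcal{V}(I:J^\infty)$.

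Finally, the Zariski closure $\overline{\mathcal{V}(I)\setminus\mathcal{V}(J)}$ is by definition the smallest variety containing $\mathcal{V}(I)\setminus\mathcal{V}(J)$. Since $\mathcal{V}(I:J^\infty)$ is itself a variety (the common zero locus of the ideal $I:J^\infty$) and it contains $\mathcal{V}(I)\setminus\mathcal{V}(J)$ by the previous paragraph, minimality of the closure yields $\overline{\mathcal{V}(I)\setminus\mathcal{V}(J)}\subseteq\mathcal{V}(I:J^\infty)$, as desired.

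There is no real obstacle here; the argument is essentially a one-line unwinding of definitions. The only two points needing a moment of care are: (i) using that $k$ is an integral domain, so that a nonzero value $g(p)$ cannot have a vanishing power — this is exactly why the statement is valid over any field and not just an algebraically closed one; and (ii) observing that the exponent $N$ in the definition of $I:J^\infty$ is permitted to depend on both $h$ and $g$, which is harmless because in the proof we invoke it only for one fixed pair. It is worth noting that the reverse inclusion fails in general (it holds, for instance, when $k$ is algebraically closed and the relevant ideals are radical, via the Nullstellensatz), which is why the theorem is stated only as a containment and why saturation is the right tool for "removing unwanted components" rather than computing an exact set difference.
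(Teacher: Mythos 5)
Your proof is correct: the pointwise argument (pick $g\in J$ not vanishing at $p$, use the defining property of the saturation for that single $g$, and invoke that a field has no zero divisors), followed by passing to the Zariski closure via the fact that $\mathcal{V}(I:J^\infty)$ is itself a variety, is exactly the standard argument. The paper does not prove this statement itself but cites it from Cox--Little--O'Shea, and your proof matches the definition of saturation as given in the paper, so nothing further is needed.
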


The \emph{elimination} of variables $x_1,\ldots,x_l$ from an ideal $I\subseteq k[x]$ is the intersection
\begin{align}\label{eq: Icapk}
    I\cap k[x_{l+1},\ldots,x_m].
\end{align}
Given $(x_1,\ldots,x_n)\in \mathcal{V}(I)$, we have that $(x_{l+1},\ldots,x_n)\in \mathcal{V}(I\cap k[x_{l+1},\ldots,x_m])$, because any $f$ in \Cref{eq: Icapk} also lies in $I$. In this way, elimination of variables gives us conditions on the projection of $\mathcal{V}(I)$ away from the first $l$ coordinates.

In \Cref{s: Kn}, we use the symbolic programming language \texttt{Macaulay2} \cite{M2} to symbolically saturate ideals and eliminate variables in the ring $\QQ[x]$. In our study, all polynomials have rational coefficients, i.e. are elements of $\QQ[x]$. However, our varieties lie in real space. For saturation and elimination, it may matter in which ring the operations are performed in. In \texttt{Macaulay2} all such operations happen inside $\QQ[x]$, and we therefore prove the following lemma for clarity.

\begin{lemma}\label{le: from Q to R} Let $I,J$ be ideals in $\RR[x]$ generated by elements of $\QQ[x]$. Write $I_Q,J_Q\subseteq \QQ[x]$ for the ideals defined as the intersections $I\cap \QQ[x], J\cap \QQ[x],$ respectively. If $y\in \RR^m$ lies in $\mathcal V(I)\setminus \mathcal V(J)$, then $f(y)=0$ for every $f$ in the saturation $I_Q:J_Q^\infty$ performed inside the ring $\QQ[x]$.
\end{lemma}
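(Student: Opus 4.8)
The plan is to reduce the claim to one algebraic fact — that forming ideals commutes with the base change $\QQ[x]\hookrightarrow\RR[x]$ — after which the conclusion drops out of a one-line evaluation; this is the real-coefficient companion to \Cref{thm: sat} that legitimizes the \texttt{Macaulay2} computations. First I would observe that, since $\RR[x]$ is Noetherian, $I$ is generated by finitely many polynomials $g_1,\dots,g_s$, and by hypothesis these may be taken in $\QQ[x]$; likewise $J=\langle h_1,\dots,h_t\rangle$ with $h_j\in\QQ[x]$. The key step is to show $I_Q=\langle g_1,\dots,g_s\rangle_{\QQ[x]}$ and $J_Q=\langle h_1,\dots,h_t\rangle_{\QQ[x]}$; the inclusions $\supseteq$ are immediate, and for $\subseteq$ I would fix a $\QQ$-vector space basis $\{e_\alpha\}$ of $\RR$ containing $e_{\alpha_0}=1$, which gives a $\QQ[x]$-module decomposition $\RR[x]=\bigoplus_\alpha e_\alpha\,\QQ[x]$. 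Writing an element of $I$ as $\sum_i c_i g_i$ with $c_i=\sum_\alpha e_\alpha c_{i,\alpha}$, $c_{i,\alpha}\in\QQ[x]$ (a finitely supported expansion), its component in the summand $e_{\alpha_0}\QQ[x]=\QQ[x]$ is $\sum_i c_{i,\alpha_0}g_i$; if the element lies in $\QQ[x]$ the other components vanish and it equals $\sum_i c_{i,\alpha_0} g_i\in\langle g_1,\dots,g_s\rangle_{\QQ[x]}$.

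With this in hand the rest is bookkeeping. Given $f\in I_Q:J_Q^\infty$ and a point $y\in\mathcal V(I)\setminus\mathcal V(J)$, I would use $y\notin\mathcal V(J)$ to pick a generator $h_j$ with $h_j(y)\neq 0$; since $h_j\in J_Q$ by the previous paragraph, the definition of saturation supplies $N\in\NN$ with $f\,h_j^{N}\in I_Q\subseteq I$. Evaluating at $y\in\mathcal V(I)$ gives $f(y)\,h_j(y)^N=0$, and as $h_j(y)\neq 0$ this forces $f(y)=0$, which is exactly the claim.

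The one point needing genuine care is the identity $I\cap\QQ[x]=\langle g_1,\dots,g_s\rangle_{\QQ[x]}$: a priori one only gets the inclusion $\supseteq$, and the reverse — equivalently, that contracting to $\QQ[x]$ the extension to $\RR[x]$ of a $\QQ[x]$-ideal recovers the original ideal — is precisely the (faithful) flatness, i.e.\ freeness, of $\RR$ over $\QQ$. The explicit basis computation above is the most elementary way to pin this down; alternatively one may invoke faithful flatness directly. Everything else — the reduction to finitely many rational generators via the Hilbert basis theorem, and the final substitution — is routine, and no input beyond the saturation itself is required.
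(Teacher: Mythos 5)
Your argument is correct, and its core (the second paragraph) is actually more direct than the paper's. The paper proves the ideal-theoretic containment $I_Q:J_Q^\infty\subseteq I:J^\infty$ — which requires a small combinatorial argument that a high enough power of $g=\sum h_ig_i$ has every term divisible by some $g_i^{N_i}$ — and then invokes \Cref{thm: sat} to conclude $y\in\mathcal V(I:J^\infty)\subseteq \mathcal V(I_Q:J_Q^\infty)$. You bypass both steps by evaluating at the point: pick a rational generator $h_j$ of $J$ with $h_j(y)\neq 0$, use the definition of saturation to get $fh_j^N\in I_Q\subseteq I$, and read off $f(y)h_j(y)^N=0$. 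This buys a shorter proof that needs no statement about arbitrary elements of $J$. One remark: the part you single out as "the one point needing genuine care" — the identity $I\cap\QQ[x]=\langle g_1,\dots,g_s\rangle_{\QQ[x]}$ via freeness of $\RR$ over $\QQ$ — is not actually used anywhere in your second paragraph. The only facts you invoke are $h_j\in J_Q$ (immediate, since $h_j\in J$ and $h_j\in\QQ[x]$) and $I_Q\subseteq I$ (immediate from $I_Q=I\cap\QQ[x]$); the nontrivial direction of the flatness identity is the reverse contraction $I\cap\QQ[x]\subseteq\langle g_i\rangle_{\QQ[x]}$, which never enters. That identity does become relevant if one wants to match the lemma's $I_Q$ with the ideal one actually types into \texttt{Macaulay2} (which is $\langle g_i\rangle_{\QQ[x]}$, a priori only contained in $I\cap\QQ[x]$), so it is a reasonable observation about the lemma's use, but for the statement as written your first paragraph can be deleted without loss.
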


Hence saturation in $\mathbb{Q}[x]$ tell us something also for the real numbers. 
The statement and proof works the same if $\RR[x]$ is replaced by $\CC[x]$. 

\begin{proof} By \Cref{thm: sat}, $y\in \mathcal V(I:J^\infty)$. It suffices to show that $I_Q:J_Q^\infty\subseteq I:J^\infty$, since then we have $\mathcal V(I:J^\infty)\subseteq \mathcal V(I_Q:J_Q^\infty)$ over the real numbers. Let $f\in I_Q:J_Q^\infty$
. Then $f\in \QQ[x]$ and for every $g\in J_Q$, there is an $N$ such that $fg^N\in I_Q$. Let $g_1,\ldots,g_k\in \QQ[x]$ generate $J$ and $J_Q$. Let $N_i$ denote an integer such that $fg_i^{N_i}\in I_Q$. Now take any $g\in J$. We can write $g=\sum_{i=1}^k h_ig_i$ for some $h_i\in \RR[x]$. There is an integer $N$ depending on $k$ and $N_i$ such that each term of $g^N$ is divisible by some $g_i^{N_i}$ and $fg^N\in I_Q$. For such $N$, we can write $g^N=\sum_{i=1}^k h_i'g_i^{N_i}$ for some $h_i'\in \RR[x]$. Then, since $fg_i^{N_i}\in I_Q$, we must have that $fg^N\in I$. This shows that inclusion $I_Q:J_Q^\infty\subseteq I:J^\infty$ and we are done. 
\end{proof}


In the main body of the text, the term rational map was used, which we now define. A variety $\mathcal V$ is called \emph{irreducible} if it cannot be written as a union of two proper varieties, meaning that for two subvarieties $X,Y$ of $\mathcal{V}$, the equality $\mathcal V= X\cup Y$ implies $\mathcal V=X$ or $\mathcal V=Y$. A \emph{rational map} $f$ between projective varieties $X$ and $Y$, with $X$ irreducible, is defined on a \emph{Zariski open} set of $X$, which is a set that can be written $X\setminus Y$ for a proper subvariety $Y\subseteq X$. A rational map between $X$ and $Y$ is written
\begin{align}
    f: X\dashrightarrow Y.
\end{align}

\section{The Fundamental Action}\label{s: Action}


In this section, we formally introduce the fundamental action, a key tool in simplifying the problem of finding compatibility conditions. 
$\mathrm{GL}_3^n$ (or equivalently $\mathrm{PGL}_3^n$) acts on a set of fundamental matrices $\Set{F^{ij}}$ by
\begin{align}
    \Set{F^{ij}}\mapsto \Set{H_i^TF^{ij}H_j}.
\end{align}
We call this the \emph{fundamental action} of $\mathrm{GL}_3^n$. The main appeal of this action is that we can use it to simplify a set of fundamental matrices, without affecting compatibility.

\begin{proposition} 
\label{prop:fundamental_action_preserves_compibility} Let $\Set{F^{ij}}$ be a set of fundamental matrices. Let $P_i$ be a solution to $\Set{F^{ij}}$. For any $(H_1,\ldots,H_n,H)\in \mathrm{PGL}_3^n\times \mathrm{PGL}_4$, we have, 
\begin{align}\label{eq: fund}
\psi(H_i^{-1}P_iH,H_j^{-1}P_jH)=H_i^T\psi(P_i,P_j)H_j.
\end{align} 
In particular, $\Set{F^{ij}}$ is compatible if and only if $\Set{G^{ij}}$ is compatible, where $G^{ij}:=H_i^TF^{ij}H_j$. 
\end{proposition}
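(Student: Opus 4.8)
The plan is to establish the identity~\eqref{eq: fund} first, since the ``in particular'' statement follows from it almost immediately. To prove~\eqref{eq: fund}, I would use characterization~5 of \Cref{prop:properties_of_the_fundamental_matrix}: for cameras $Q_1,Q_2$ and a $3\times 3$ matrix $G$, one has $G=\psi(Q_1,Q_2)$ if and only if $Q_1^T G Q_2$ is skew-symmetric. So set $Q_i:=H_i^{-1}P_iH$, $Q_j:=H_j^{-1}P_jH$, and $G:=H_i^T\psi(P_i,P_j)H_j$. Then compute
\begin{align}
Q_i^T G Q_j
&=\bigl(H_i^{-1}P_iH\bigr)^T\,H_i^T\psi(P_i,P_j)H_j\,\bigl(H_j^{-1}P_jH\bigr)\notag\\
&=H^T P_i^T (H_i^{-1})^T H_i^T\,\psi(P_i,P_j)\,H_j H_j^{-1} P_j H\notag\\
&=H^T\,P_i^T\psi(P_i,P_j)P_j\,H.\notag
\end{align}
By \Cref{prop:properties_of_the_fundamental_matrix}~5 applied to the cameras $P_i,P_j$ (which are cameras since $H$ preserves rank and $H_i,H_j$ are invertible, and $P_i,P_j$ have distinct centers as they form a solution), the middle matrix $P_i^T\psi(P_i,P_j)P_j$ is skew-symmetric, and conjugating a skew-symmetric matrix by $H^T(\cdot)H$ preserves skew-symmetry since $(H^T S H)^T = H^T S^T H = -H^T S H$. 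Hence $Q_i^T G Q_j$ is skew-symmetric, so by the ``only if'' direction of characterization~5, $G=\psi(Q_i,Q_j)$, which is exactly~\eqref{eq: fund}. One small point to check is that $Q_i$ and $Q_j$ genuinely have distinct kernels so that $\psi$ is defined (not the zero map) at this pair; this holds because $\ker Q_i = H^{-1}\ker P_i$ and the $P_i$ have pairwise distinct centers.

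For the ``in particular'' claim: suppose $\Set{F^{ij}}$ is compatible with solution $P_1,\ldots,P_n$. Apply~\eqref{eq: fund} with $H=I$ (the identity in $\mathrm{PGL}_4$) and the given $H_1,\ldots,H_n$; this gives $\psi(H_i^{-1}P_i,H_j^{-1}P_j)=H_i^T F^{ij}H_j = G^{ij}$ for all $i,j$, so the cameras $H_1^{-1}P_1,\ldots,H_n^{-1}P_n$ form a solution to $\Set{G^{ij}}$, proving $\Set{G^{ij}}$ is compatible. The converse is symmetric: since $G^{ij}=H_i^TF^{ij}H_j$ is equivalent to $F^{ij}=(H_i^{-1})^TG^{ij}H_j^{-1}=(H_i^{-1})^TG^{ij}(H_j^{-1})$, applying the same argument with $H_i$ replaced by $H_i^{-1}$ recovers compatibility of $\Set{F^{ij}}$ from that of $\Set{G^{ij}}$.

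I do not anticipate a serious obstacle here: the proof is essentially a bookkeeping exercise in matrix algebra built on characterization~5. The one place to be careful is ensuring all maps are well-defined as rational maps at the relevant points, i.e. that the cameras appearing inside $\psi$ have pairwise distinct centers so that $\psi$ does not vanish identically there; this follows because $H$ and the $H_i$ are invertible and a solution consists of cameras with distinct centers. A secondary subtlety is that everything is only defined up to scale (projectively), but since skew-symmetry is a scale-invariant property and the identity in characterization~5 is itself projective, this causes no issue.
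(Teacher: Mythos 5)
Your proof is correct, but it takes a different route from the paper's. The paper works directly from the determinantal definition of $\psi$: it multiplies the $6\times 6$ block matrix in \eqref{eq: det} on the left by $\operatorname{diag}(H_i,H_j)$, which changes the determinant only by the nonzero factor $\det H_i\det H_j$ and turns $H_i^{-1}P_i$ into $P_i$ while turning $\textbf{x}_i$ into $H_i\textbf{x}_i$; reading off the resulting bilinear form in $(\textbf{x}_i,\textbf{x}_j)$ gives \eqref{eq: fund} immediately. You instead route everything through item~5 of \Cref{prop:properties_of_the_fundamental_matrix}: you verify that $Q_i^TGQ_j=H^TP_i^T\psi(P_i,P_j)P_jH$ is skew-symmetric and invoke the ``if and only if'' to conclude $G=\psi(Q_i,Q_j)$. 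This is valid, and you are right to check the two points on which the characterization silently depends, namely that $Q_i,Q_j$ are genuine cameras with distinct centers (so that the skew-symmetry condition pins down $G$ up to scale) and that $G$ is nonzero of rank $2$. The trade-off: the paper's argument is self-contained, using only the definition of $\psi$, whereas yours leans on a characterization the paper imports from Hartley--Zisserman without proof; on the other hand, you carry the $\mathrm{PGL}_4$ factor $H$ explicitly through the computation, where the paper simply dismisses it as ``a standard fact'' and sets $H=I$. One cosmetic slip: the parenthetical ``which are cameras since $H$ preserves rank'' should refer to $Q_i,Q_j$ rather than $P_i,P_j$, but this does not affect the argument. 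The ``in particular'' part is handled the same way in both proofs.
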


\begin{proof} It is a standard fact that the action of $H\in \mathrm{PGL}_4$ in \Cref{eq: fund} does not change the fundamental matrix, so we may set $H=I$. Consider the following equality up to scaling,
\begin{align}
    \det \begin{bmatrix} H_i^{-1}P_i & x_i & 0 \\
H_j^{-1}P_j & 0 & x_j\end{bmatrix}=\det \begin{bmatrix}P_i & H_ix_i & 0 \\
P_j & 0 & H_jx_j\end{bmatrix}.
\end{align}
Writing these expressions in terms of fundamental matrices, we get exactly \Cref{eq: fund}. \end{proof}


The fundamental action gives rise to an equivalence relation. 
For compatible fundamental matrices, the equivalence classes turn out to be the equivalence classes of $n$ points in $\p3$ under $\PGL_4$.

\begin{proposition} \label{prop:equivalence} Let $\Set{F^{ij}}$ and $\Set{G^{ij}}$ be two sets of compatible fundamental matrices. They are equivalent under fundamental action if and only if they have solutions whose camera centers are equivalent under $\mathrm{PGL}_4$.
\end{proposition}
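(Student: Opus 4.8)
The plan is to derive both implications from \Cref{prop:fundamental_action_preserves_compibility}, which already records how the fundamental action on $\Set{F^{ij}}$ interacts with left multiplication of a solution by elements of $\mathrm{PGL}_3$ and right multiplication by $\mathrm{PGL}_4$. The only genuinely new ingredient will be an elementary linear-algebra fact: two full-rank $3\times 4$ matrices with the same kernel differ by left multiplication by an element of $\mathrm{GL}_3$. Throughout, cameras and fundamental matrices are only defined projectively, but since the relevant transformations live in $\mathrm{PGL}_3$ and $\mathrm{PGL}_4$ this is harmless; one fixes affine representatives at the outset.

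For the forward direction, suppose $G^{ij}=H_i^TF^{ij}H_j$ for some $H_i\in\mathrm{PGL}_3$, and let $P_1,\dots,P_n$ be a solution of $\Set{F^{ij}}$. Applying \Cref{prop:fundamental_action_preserves_compibility} with the $\mathrm{PGL}_4$ factor equal to the identity, the cameras $H_i^{-1}P_i$ form a solution of $\Set{G^{ij}}$. Since $H_i^{-1}$ is invertible, $\ker(H_i^{-1}P_i)=\ker P_i$, so these two solutions have literally the same tuple of centers, and in particular the centers are $\mathrm{PGL}_4$-equivalent (via the identity).

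For the converse, let $P_i$ be a solution of $\Set{F^{ij}}$ and $Q_i$ a solution of $\Set{G^{ij}}$, and assume there is $H\in\mathrm{PGL}_4$ with $\ker Q_i=H\ker P_i$ for all $i$. Right multiplication of all cameras by $H$ does not change any fundamental matrix (the standard fact used in the proof of \Cref{prop:fundamental_action_preserves_compibility}), so $Q_iH$ is again a solution of $\Set{G^{ij}}$, and now $\ker(Q_iH)=H^{-1}\ker Q_i=\ker P_i$ for every $i$. By the linear-algebra fact, for each $i$ there is $H_i\in\mathrm{PGL}_3$ with $Q_iH=H_i^{-1}P_i$. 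Feeding this into \Cref{prop:fundamental_action_preserves_compibility} gives $G^{ij}=\psi(Q_iH,Q_jH)=\psi(H_i^{-1}P_i,H_j^{-1}P_j)=H_i^TF^{ij}H_j$, so $\Set{F^{ij}}$ and $\Set{G^{ij}}$ are equivalent under the fundamental action.

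It remains to justify the linear-algebra fact: if $P,Q$ are full-rank $3\times 4$ matrices with $\ker P=\ker Q=\langle v\rangle$, extend $v$ to a basis $v,w_1,w_2,w_3$ of $\RR^4$; then $Pw_1,Pw_2,Pw_3$ and $Qw_1,Qw_2,Qw_3$ are each bases of $\RR^3$, so the unique $A\in\mathrm{GL}_3$ with $A(Pw_k)=Qw_k$ satisfies $AP=Q$, both sides agreeing on the chosen basis of $\RR^4$; take $H_i=A^{-1}$. I do not expect a real obstacle here. The only point requiring care is bookkeeping: keeping the $\mathrm{PGL}_4$ equivalence of centers acting on the camera side (as right multiplication) separate from the $\mathrm{PGL}_3^n$ fundamental action, and checking that "solution" is preserved under right multiplication by $\mathrm{PGL}_4$.
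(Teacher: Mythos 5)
Your proof is correct and follows essentially the same route as the paper: the forward direction applies \Cref{prop:fundamental_action_preserves_compibility} to $H_i^{-1}P_i$, and the converse aligns the two solutions by a right $\mathrm{PGL}_4$ factor and then invokes the fact that cameras with a common center differ by a left $\mathrm{PGL}_3$ factor. The only difference is that you prove this last fact from scratch, whereas the paper cites it as \Cref{lem:cameras_sharing_center_are_equivalent}.
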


For the proof we need the following lemma:
\begin{lemma} [{\hspace{1sp}\cite[Result 22.1]{Hartley2004}}]
\label{lem:cameras_sharing_center_are_equivalent}
Let $P$ and $P'$ be two camera matrices with the same center. Then there
exists $H\in\PGL_3$ such that $P' = HP$.
\end{lemma}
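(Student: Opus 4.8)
The plan is to use the fact that a camera is precisely a surjective linear map $\RR^4\to\RR^3$ whose center is its one-dimensional kernel, and then to build the required $H\in\mathrm{GL}_3$ explicitly before passing to $\PGL_3$. Write $\langle c\rangle:=\ker P=\ker P'$ for the common center; since $P$ and $P'$ have rank $3$, this is a one-dimensional subspace of $\RR^4$.

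First I would fix a complement to the center by extending $c$ to a basis $v_1,v_2,v_3,c$ of $\RR^4$. Because $\ker P=\langle c\rangle$ and $\Span(v_1,v_2,v_3)\cap\langle c\rangle=\{0\}$, the restriction of $P$ to $\Span(v_1,v_2,v_3)$ is injective, so the $3\times 3$ matrix $M:=[\,Pv_1\mid Pv_2\mid Pv_3\,]$ is invertible; the identical argument for $P'$ shows $M':=[\,P'v_1\mid P'v_2\mid P'v_3\,]$ is invertible as well. I would then define $H:=M'M^{-1}\in\mathrm{GL}_3$.

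The verification that $P'=HP$ reduces to checking the two maps on the chosen basis of $\RR^4$. For each $i$ one has $M^{-1}Pv_i=e_i$ and $M'e_i=P'v_i$, hence $HPv_i=M'M^{-1}Pv_i=P'v_i$; and on the center $HPc=0=P'c$, since $c$ lies in both kernels. Agreement on a basis yields the equality $HP=P'$ of $3\times 4$ matrices, which descends to $P'=HP$ in $\PGL_3$ once we recall that cameras and $H$ are defined only up to scale.

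I expect no serious obstacle, as this is essentially the first isomorphism theorem: both $P$ and $P'$ factor through the quotient $\RR^4/\langle c\rangle$ as isomorphisms onto $\RR^3$, and $H$ is just the composite of one such isomorphism with the inverse of the other. The only points requiring care are the well-definedness inputs, namely that the common kernel is exactly one-dimensional (so a three-dimensional complement exists and the restricted maps are genuine isomorphisms) and the harmless scaling ambiguity that lets the affine identity $HP=P'$ be read as an equality in $\PGL_3$.
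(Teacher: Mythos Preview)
Your proof is correct. The paper does not actually prove this lemma; it is quoted directly from \cite[Result 22.1]{Hartley2004} and used as a black box in the proof of \Cref{prop:equivalence}. Your argument is the standard one: since both cameras have the same one-dimensional kernel, they factor through the same quotient $\RR^4/\langle c\rangle$ as isomorphisms onto $\RR^3$, and $H$ is the composite of one isomorphism with the inverse of the other. The explicit construction via $H=M'M^{-1}$ is clean and leaves nothing to check beyond what you have written.
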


\begin{proof}[Proof of \Cref{prop:equivalence}] $ $

$\Rightarrow)$ Let $G^{ij}=H_i^TF^{ij}H_j$. If $P_1,\ldots,P_n$ is a solution to $\Set{F^{ij}}$, then by \Cref{prop:fundamental_action_preserves_compibility}, $H_1^{-1}P_1,\ldots,H_n^{-1}P_n$ is a solution to $G^{ij}$, which have the same centers as $P_1,\ldots,P_n$. 

$\Leftarrow)$ Let $P_1,\ldots,P_n$ be a solution to $\Set{F^{ij}}$ with centers $c_i$ and $P_1',\ldots,P_n'$ a solution to $\Set{G^{ij}}$ with centers $c_i'$ such that $c_i'=H^{-1}c_i$ for some $H\in \mathrm{PGL}_4$. By \Cref{lem:cameras_sharing_center_are_equivalent}, there are $H_i\in \mathrm{PGL}_3$ such that $P_i'=H_iP_iH$, since $P_i'$ and $P_iH$ have the same center $H^{-1}c_i$. Then by \Cref{prop:fundamental_action_preserves_compibility}, $\Set{F^{ij}}$ and $\Set{G^{ij}}$ are equivalent under fundamental action.
\end{proof}

In this paper, quantities of the form $\textbf{e}_{sijt}:=(e_i^s)^TF^{ij}e_j^t$, called \emph{epipolar numbers}, are important (see \Cref{thm:compatible_forms,thm: 4tuple-condition}). The epipolar numbers are invariant under the fundamental action: 
\begin{lemma}\label{le: epip inv} Let $\Set{{F}^{ij}}$ be a set of fundamental matrices with epipoles $\Set{{e}_j^i}$. Let ${H}_i\in \mathrm{GL}_3^n$ and consider the fundamental matrices ${G}^{ij}:={H}_i^T{F}^{ij}{H}_j$, whose epipoles are ${h}_j^i={H}_j^{-1}{e}_j^i$. Then
\begin{align}
({e}_i^s)^T{F}^{ij}{e}_j^t=({h}_i^s)^T{G}^{ij}{h}_j^t.
\end{align}
\end{lemma}

\begin{proof} The equality follows directly by the definitions of $G^{ij}$ and $h_j^i$.
\end{proof}


We have the following geometrical interpretation of the epipolar numbers. 

\begin{lemma}\label{le: epip-coplane} Let $\Set{F^{ij}}$ be set of compatible fundamental matrices that include $F^{si},F^{ij}$ and $F^{jt}$. We have $\textnormal{\textbf{e}}_{sijt}=0$ if and only if the centers $c_s,c_i,c_j$ and $c_t$ of any solution are coplanar. 
\end{lemma}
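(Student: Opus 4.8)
The plan is to reduce the statement to a direct computation by exploiting the fundamental action. By \Cref{prop:fundamental_action_preserves_compibility} and \Cref{le: epip inv}, both the vanishing of $\textbf{e}_{sijt}$ and the coplanarity of the four centers are invariant under the fundamental action, and the latter is obviously invariant under the $\mathrm{PGL}_4$-action on the cameras. Since only the four indices $s,i,j,t$ appear in the statement, I may as well assume $n=4$ and relabel them as $1,2,3,4$; the goal becomes to show $\textbf{e}_{1234} = (e_1^1)^T F^{12} \cdots$ — more precisely $(e_2^1)^T F^{23} e_3^4 = 0$ if and only if the four centers $c_1,c_2,c_3,c_4$ are coplanar. (Here I use that $e_2^1 = P_2 c_1$ and $e_3^4 = P_3 c_4$ by \Cref{prop:properties_of_the_fundamental_matrix} 4.)

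First I would fix a convenient solution $P_1,\ldots,P_4$ to $\Set{F^{ij}}$, which exists by compatibility. Using \Cref{prop:properties_of_the_fundamental_matrix} 4., rewrite
\[
\textbf{e}_{sijt} = (e_i^s)^T F^{ij} e_j^t = (P_i c_s)^T F^{ij} (P_j c_t) = c_s^T (P_i^T F^{ij} P_j) c_t.
\]
By \Cref{prop:properties_of_the_fundamental_matrix} 5., the matrix $A := P_i^T F^{ij} P_j$ is skew-symmetric, so $c_s^T A c_t$ is the value of the alternating bilinear form $A$ on the pair $(c_s, c_t)$. The key algebraic fact I want is the classical identity expressing this skew form in terms of $4\times 4$ determinants: $A = P_i^T F^{ij} P_j$ equals (up to a nonzero scalar depending only on $P_i,P_j$) the matrix whose $(k,l)$-entry is $\det[\,(\text{rows }k,l\text{ of }P_i)\,;\,(\text{all rows of }P_j\text{?})\,]$ — more cleanly, $c_s^T P_i^T F^{ij} P_j\, c_t$ is proportional to $\det\begin{bmatrix} P_i \\ P_j \end{bmatrix}$ with the columns weighted so that it becomes $\det\begin{bmatrix} P_i c_s & P_j c_t & \cdots\end{bmatrix}$; I should instead use that it is proportional to the $4\times 4$ determinant $\det[c_i \mid c_j \mid c_s \mid c_t]$, where $c_i, c_j$ are the centers of $P_i, P_j$.

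The cleanest route to that determinantal identity is via the definition of $\psi$ itself. By definition, $\textbf{x}^T F^{ij} \textbf{y} = \det\begin{bmatrix} P_i & \textbf{x} & 0 \\ P_j & 0 & \textbf{y}\end{bmatrix}$. Substituting $\textbf{x} = e_i^s = P_i c_s$ and $\textbf{y} = e_j^t = P_j c_t$ and performing column operations (subtract $P_i$ applied to $c_s$ from the column $\textbf{x}$, etc.) I expect to transform this $6\times 6$ determinant into something proportional to $\det[c_i \mid c_j \mid c_s \mid c_t]$, where $c_i = \ker P_i$, $c_j = \ker P_j$. Concretely: in $\det\begin{bmatrix} P_i & P_i c_s & 0 \\ P_j & 0 & P_j c_t\end{bmatrix}$, the column $(P_i c_s, 0)^T$ is $\begin{bmatrix}P_i\\ P_j\end{bmatrix} c_s$ minus $(0, P_j c_s)^T$, and similarly for the other added column; expanding multilinearly and using $P_i c_i = 0$, $P_j c_j = 0$ to kill terms, one is left with a multiple of $\det\begin{bmatrix}P_i\\ P_j\end{bmatrix}$ evaluated on $[\,\cdot\mid\cdot\mid c_s\mid c_t\,]$ — but $\begin{bmatrix}P_i\\ P_j\end{bmatrix}$ is a $6\times 4$ matrix, so one instead gets a $4\times 4$ minor, and the surviving minor is exactly $\det[c_i \mid c_j \mid c_s \mid c_t]$ up to a nonzero scalar (the nonzero Plücker coordinate of the $2$-plane $\ker\begin{bmatrix}P_i\\ P_j\end{bmatrix}^{\perp}$ or similar). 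I would verify this bookkeeping carefully on the chosen normalized cameras. Once this identity is in hand, $\textbf{e}_{sijt} = 0 \iff \det[c_s \mid c_i \mid c_j \mid c_t] = 0 \iff$ the four centers are coplanar, which is the claim.

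The main obstacle is the second-to-last step: carrying out the column-expansion of the $6\times 6$ determinant and correctly identifying the nonzero proportionality constant, in particular checking it is genuinely nonzero for cameras with pairwise distinct centers (so that the "if and only if" is not degraded to a one-way implication). A clean way to sidestep the constant-chasing is to pick an explicit normal form via the fundamental action: by \Cref{prop:fundamental_action_preserves_compibility} we may conjugate so that, say, $P_i = [I \mid 0]$ and $P_j = [[e_j^i]_\times \mid e_j^i]$ in the standard Hartley–Zisserman form with $F^{ij} = [e_j^i]_\times$, reducing everything to a concrete small computation where both sides are visibly equal to a single $4\times 4$ determinant in the coordinates of $c_s$ and $c_t$.
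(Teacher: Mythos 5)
Your plan is correct, but it follows a genuinely different route from the paper. The paper's proof is a four-line geometric argument: it invokes the standard interpretation of the epipolar constraint, namely that $(e_i^s)^TF^{ij}e_j^t=0$ holds exactly when the back-projected line of $e_i^s$ (which is the line through $c_i$ and $c_s$) meets the back-projected line of $e_j^t$ (the line through $c_j$ and $c_t$), and two lines in $\PP^3$ meet if and only if the four points spanning them are coplanar. You instead prove the stronger algebraic identity $\textbf{e}_{sijt}=\lambda\det[c_s\;c_i\;c_j\;c_t]$ with $\lambda\neq 0$. Your version quantifies the epipolar number rather than only characterizing its vanishing, which is more than the lemma needs, and this is exactly why you run into the bookkeeping you flag as the main obstacle; the paper's argument avoids the constant entirely.

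That said, the step you leave open closes cleanly with the tools already in your second paragraph, with no need to expand the $6\times 6$ determinant or to pass to a normal form. Set $A:=P_i^TF^{ij}P_j$. By \Cref{prop:properties_of_the_fundamental_matrix}(5) it is skew-symmetric, and it has rank exactly $2$: $P_i^T$ is injective and $F^{ij}P_j$ has rank $2$ because $P_j$ is surjective and $F^{ij}$ has rank $2$. Hence the radical of the alternating form $A$ is a $2$-plane. Since $Ac_j=P_i^TF^{ij}(P_jc_j)=0$ and $c_i^TA=(P_ic_i)^TF^{ij}P_j=0$, and $c_i,c_j$ are linearly independent (the centers are distinct, as $F^{ij}$ is defined), that radical is exactly $\mathrm{span}(c_i,c_j)$. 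An alternating form on $\RR^4$ whose radical contains a fixed $2$-plane $W$ descends to an alternating form on the $2$-dimensional quotient $\RR^4/W$, so the space of such forms is one-dimensional; both $(x,y)\mapsto x^TAy$ and $(x,y)\mapsto\det[c_i\;c_j\;x\;y]$ are nonzero elements of it, hence proportional with a nonzero constant. This gives $\textbf{e}_{sijt}=c_s^TAc_t=\lambda\det[c_i\;c_j\;c_s\;c_t]$, $\lambda\neq0$, and the lemma follows since vanishing of that determinant is exactly coplanarity of the four centers.
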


The \emph{back-projected line} of an image point $x$ for a camera $P$ is the line in $\PP^3$ of all points that are projected by $P$ to $x$. This line contains the center of $P$. 
\begin{proof} Let $P_1,\ldots,P_n$ be a solution to $\Set{F^{ij}}$. Let $L_{i,s}$ be the back-projected line of $e_{i}^s$ and $L_{j,t}$ the back-projected line of $e_j^t$. Then $e_i^sF^{ij}e_j^t=0$ means precisely that the back-projected lines $L_{i,s}$ and $ L_{j,t}$ meet in a point. Therefore, $L_{i,s}$ and $L_{j,t}$ together span a plane unless they are the same line. In either case, all centers lie in this span, since $L_{i,s}$ contains $c_i$ and $c_s$, and $L_{j,t}$ contains $c_j$ and $c_t$. The other direction follows similarly. 
\end{proof}

It follows from the lemma that putting any of the two indices $s,i,j,t$ equal, the epipolar number is zero. In particular, $\textbf{e}_{sijs}$ is always zero for compatible fundamental matrices, because three centers are always in a plane. 

\section{Compatibility for Complete Graphs}\label{s: Kn} 
We begin by giving our main results for complete graphs, that is, the case where all the fundamental matrices are known. The main contribution of this paper is providing explicit, algebraic conditions for compatibility expressed in terms of the fundamental matrices and their epipoles for any number of views. Let $K_n$ denote the complete graph on $n$ nodes.

In \Cref{ss: K3}, we deal with $K_3$ graphs and recall the triple-wise conditions. We also state a result for the collinear case. In \Cref{ss: K4} we find necessary and sufficient constraints for compatibility in the case of $K_4$. In \Cref{ss: Kn} we prove that quadruple-wise compatbility implies global compatibility. Finally, in \Cref{ss: n-view} we state that the eigenvalue condition from the theorem of Kasten et. al. is redundant in the generic and collinear cases.

\begin{remark}
In this section, we work only with real numbers, because it allows us to give polynomials equations using the standard inner product and norm on $\RR^3$. However, all of our statements in \Cref{ss: K3} and \Cref{ss: K4} can be extended to the complex numbers.
\end{remark}


\subsection{$K_3$} \label{ss: K3}
The case of three fundamental matrices is fairly straightforward. We have two possible configurations for the three camera centers; they either all lie on a line, or they do not.
\begin{theorem}[{\hspace{1sp}\cite[Section 15.4]{Hartley2004}}]
\label{thm:compatible_forms}
Let $F^{12}$, $F^{13}$, $F^{23}$ be fundamental matrices. There exist non-collinear cameras $P_1,P_2,P_3$ such that $F^{ij}=\psi(P_i,P_j)$ if and only if
\begin{align}
\label{eq_non_collinear}
e_{1}^{2}\neq e_{1}^{3}, \quad e_{2}^{1}\neq e_{2}^{3}, \quad e_{3}^{1}\neq e_{3}^{2},
\end{align}
and
\begin{align}
\label{eq_compatible}
(e_{1}^{3})^{T}F^{12}e_{2}^{3}=(e_{1}^{2})^{T}F^{13}e_{3}^{2}=(e_{2}^{1})^{T}F^{23}e_{3}^{1}=0.
\end{align}
\end{theorem}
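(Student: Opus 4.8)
The plan is to prove both directions by reducing, via the fundamental action (\Cref{prop:fundamental_action_preserves_compibility}), to a normal form in which the cameras and fundamental matrices are explicit and the conditions can be checked by direct computation. First I would observe that a rank-$2$ matrix $F^{12}$ always has a solution $P_1, P_2$ by \Cref{prop:properties_of_the_fundamental_matrix}(2), and that applying the fundamental action lets us put $P_1 = [I \mid 0]$ and $P_2 = [[e_2^1]_\times F^{12} \mid e_2^1]$ in the standard canonical form (the classical construction, e.g.\ \cite[Section 9]{Hartley2004}). So the content of the theorem is really about the third camera: given $F^{13}$ and $F^{23}$, when can we find $P_3$ compatible with both $P_1$ and $P_2$?

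For the forward direction, suppose non-collinear cameras $P_1, P_2, P_3$ exist. By \Cref{prop:properties_of_the_fundamental_matrix}(4), each epipole is the image of one camera center under another camera; non-collinearity of the three centers $c_1, c_2, c_3$ means $c_3 \notin \overline{c_1 c_2}$, and I would translate this into the statement that the two epipoles in each image are distinct — e.g.\ $e_1^2 = P_1 c_2$ and $e_1^3 = P_1 c_3$ are equal iff $c_2, c_3$ and $c_1 = \ker P_1$ are collinear. This gives \Cref{eq_non_collinear}. For \Cref{eq_compatible}, I would invoke \Cref{le: epip-coplane}: the epipolar number $\textbf{e}_{1323} = (e_1^3)^T F^{12} e_2^3$ vanishes iff $c_1, c_3, c_2, c_3$ are coplanar — which is automatic since one index repeats, as noted in the remark after \Cref{le: epip-coplane}. (Equivalently one checks it directly: $P_1^T F^{12} P_2$ is skew by \Cref{prop:properties_of_the_fundamental_matrix}(5), and $e_1^3 = P_1 c_3$, $e_2^3 = P_2 c_3$, so $(e_1^3)^T F^{12} e_2^3 = c_3^T (P_1^T F^{12} P_2) c_3 = 0$.) The same argument handles the other two equations by symmetry.

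For the converse, assume \Cref{eq_non_collinear} and \Cref{eq_compatible} hold. Using the fundamental action I normalize $F^{12}$ and recover $P_1 = [I\mid 0]$, $P_2$ as above. The goal is to build $P_3$. Writing $P_3 = [M \mid v]$ with $v$ the (affine representative of the) center's image data, I would use \Cref{prop:properties_of_the_fundamental_matrix}(5): $F^{13} = \psi(P_1, P_3)$ iff $P_1^T F^{13} P_3$ is skew, and likewise for $F^{23}$. The first condition pins down $P_3$ up to a few parameters (essentially $M$ is forced up to adding multiples of the epipole and $v$ is $e_3^1$ up to scale), and I then verify that the second skew-symmetry condition holds precisely because of \Cref{eq_compatible} — this is where the single scalar equation $(e_1^3)^T F^{12} e_2^3 = 0$ (together with its symmetric partners) does the work of matching up the scalings, and \Cref{lem:triple_has_unique_solution} guarantees the resulting solution is unique once \Cref{eq_non_collinear} holds. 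Finally I check non-collinearity of the constructed centers from \Cref{eq_non_collinear} via the same epipole-image translation as in the forward direction.

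The main obstacle I expect is the bookkeeping in the converse: matching the \emph{scales} of the three independently-given fundamental matrices. Each $F^{ij}$ is only defined up to a scalar, and the explicit constructions of $P_1, P_2$ from $F^{12}$ and then $P_3$ from $F^{13}$ fix particular representatives; the equations in \Cref{eq_compatible} are exactly the obstruction to these choices being mutually consistent, so the crux is to carry out the linear algebra carefully enough to see that the three scalar equations vanishing is both necessary and sufficient for the skew-symmetry conditions to hold simultaneously — rather than, say, only up to a scalar that might not be realizable. Since this is a classical result I would largely cite \cite[Section 15.4]{Hartley2004} for the detailed computation and emphasize the conceptual reduction via the fundamental action and \Cref{le: epip-coplane}.
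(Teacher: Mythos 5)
The paper gives no proof of this theorem---it is quoted directly from \cite[Section 15.4]{Hartley2004}---and your proposal follows exactly the classical route: necessity via the skew-symmetry of $P_1^TF^{12}P_2$ applied to $e_1^3=P_1c_3$, $e_2^3=P_2c_3$ (this direction you argue fully and correctly), and sufficiency via the canonical pair $P_1=[I\mid 0]$, $P_2=[\,[e_2^1]_\times F^{12}\mid e_2^1\,]$ together with the parametrized family of candidate third cameras, with the decisive scale-matching computation deferred to the same citation the paper relies on, so the approaches coincide. Two small nits: the epipolar number you write as $\textbf{e}_{1323}$ should be $\textbf{e}_{3123}$ in the paper's convention $\textbf{e}_{sijt}=(e_i^s)^TF^{ij}e_j^t$, and \Cref{lem:triple_has_unique_solution} presupposes compatibility, so it can only be invoked for uniqueness after your construction of $P_3$ succeeds, not as an ingredient of that construction.
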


If $P_1,P_2,P_3$ are cameras with collinear centers, then it follows that $P_i(\ker P_j)=P_i(\ker P_k)$ for all distinct $i,j,k$. This implies that for the corresponding fundamental matrices $F^{12},F^{13},F^{23}$, we have $e_j^i=e_j^k$ for all distinct $i,j,k$. However, contrary to what is claimed in \cite{HK}, the conditions in \Cref{eq_compatible} are not enough in this case: 

\begin{example}
\label{ex:counterexample1}
\textnormal{Consider the fundamental matrices:}
\begin{align}
\begin{aligned}
    &F^{12}=\begin{bmatrix} 0&0&0\\0&1&0\\0&0&1 \end{bmatrix}, & &F^{13}=\begin{bmatrix} 0&0&0\\0&0&1\\0&1&0 \end{bmatrix},    & &F^{23}=\begin{bmatrix} 0&0&0\\0&1&1\\0&-1&1 \end{bmatrix},  
\end{aligned}
\end{align}
\textnormal{whose epipoles are all equal to $[1,0,0]$. These six matrices satisfy the conditions in \Cref{eq_compatible}. However, no solution of cameras $P_1,P_2,P_3$ exist for which $F^{12},F^{13},F^{23}$ are the fundamental matrices. This can be checked for instance via the algorithm described at the end of \Cref{s: Pre}.}
\hfill$\diamond$
\end{example}

Given a vector $t\in \RR^3$, we define 
\begin{align}
    [t]_\times=\begin{bmatrix} 0 & -t_3& t_2\\
t_3 & 0 & -t_1\\ -t_2 &t_1 &0\end{bmatrix}.
\end{align}
Then with respect to the cross product $\times$ on $\RR^3\times \RR^3$, we have $t\times u=[t]_\times u$. To the best of our knowledge, the following result does not appear in the literature:

\begin{proposition}\label{prop: K3 colin} Let $F^{12}$, $F^{13}$, $F^{23}$ be fundamental matrices. There exist collinear cameras $P_1,P_2,P_3$ such that $F^{ij}=\psi(P_i,P_j)$ if and only if
\begin{align}
\label{eq_collinear}
e_{1}^{2}= e_{1}^{3}, \quad e_{2}^{1}= e_{2}^{3}, \quad e_{3}^{1}= e_{3}^{2},
\end{align}  
and (up to scaling)
\begin{align}
    (F^{12})^T[e_1^2]_\times F^{13}=F^{23}.
\end{align}
\end{proposition}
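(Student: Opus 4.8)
The plan is to use the fundamental action (Proposition~\ref{prop:fundamental_action_preserves_compibility}) to normalize the three fundamental matrices to a canonical collinear form, verify the claimed identity in that normal form, and then transport it back using the invariance of the relevant expressions. First I would prove the forward direction: suppose $P_1, P_2, P_3$ are collinear cameras with $F^{ij} = \psi(P_i, P_j)$. Collinearity of the centers immediately gives \Cref{eq_collinear} as explained in the text just before the proposition. For the identity, the cleanest route is to use the fundamental action to move to the normal form where, say, $P_1 = [I \mid 0]$ and the common epipole $e$ in the first image is a fixed vector; since all three centers are collinear one can arrange $P_2 = [I \mid 0]$ composed with something explicit, or more directly write $F^{12} = [e]_\times A_{12}$, $F^{13} = [e]_\times A_{13}$ for suitable invertible $A_{12}, A_{13}$ (using that a rank-2 matrix with left kernel spanned by $e$ factors this way), and similarly analyze $F^{23}$. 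In the collinear case the camera centers $c_1, c_2, c_3$ lie on a line, which forces strong relations among the $A$'s; concretely, $F^{23}$ should come out equal to $(F^{12})^T [e_1^2]_\times F^{13}$ up to scale after chasing the skew-symmetry characterization of Proposition~\ref{prop:properties_of_the_fundamental_matrix}(5).

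A more computational but very robust way to organize the forward direction: by Proposition~\ref{prop:fundamental_action_preserves_compibility} and Proposition~\ref{prop:equivalence}, compatibility and this polynomial identity are both preserved under the fundamental action (the left and right sides transform the same way, since $F^{12} \mapsto H_1^T F^{12} H_2$, $e_1^2 \mapsto H_1^{-1} e_1^2$, $F^{13} \mapsto H_1^T F^{13} H_3$, and one checks $(H_1^T F^{12} H_2)^T [H_1^{-1} e_1^2]_\times (H_1^T F^{13} H_3) = H_2^T\!\big((F^{12})^T [e_1^2]_\times F^{13}\big) H_3$ using $H_1^{-T}[t]_\times H_1^{-1} = \det(H_1^{-1})\, [H_1 t]_\times$ — this cofactor identity is the one delicate algebraic fact and should be stated as a small lemma). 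Hence it suffices to verify the statement for a single representative of each equivalence class of collinear configurations, and collinear configurations of three points in $\PP^3$ form essentially one projective class (three distinct collinear points, plus degenerate coincidences handled separately by Lemma~\ref{lem:triple_has_unique_solution} or directly). For that representative one picks, e.g., $c_1 = [1,0,0,0]$, $c_2 = [0,1,0,0]$, $c_3 = [1,1,0,0]$ and generic cameras, computes all three fundamental matrices and epipoles explicitly, and checks the identity by hand or in \texttt{Macaulay2}.

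For the converse, suppose \Cref{eq_collinear} holds and $(F^{12})^T [e_1^2]_\times F^{13} = F^{23}$ up to scale. I would first recover cameras $P_1, P_2$ from $F^{12}$ (possible by Proposition~\ref{prop:properties_of_the_fundamental_matrix}(2)), then recover $P_3$: the condition $e_1^2 = e_1^3$ and $e_2^1 = e_2^3$ means the triple $\{F^{12}, F^{13}, F^{23}\}$ falls into the degenerate regime of Lemma~\ref{lem:triple_has_unique_solution}, so I instead build $P_3$ directly. Writing $P_3$ with center on the line through $c_1, c_2$, the equations $\psi(P_1, P_3) = F^{13}$ and $\psi(P_2, P_3) = F^{23}$ become linear(-ish) conditions on the remaining entries of $P_3$; the factorization $F^{23} = (F^{12})^T [e_1^2]_\times F^{13}$ is exactly what guarantees these are consistent. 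Then collinearity of the resulting $c_1, c_2, c_3$ follows from \Cref{eq_collinear} via Lemma~\ref{le: epip-coplane}-type reasoning, or is forced by construction.

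The main obstacle I anticipate is the converse: showing that the single scalar-matrix identity, together with the epipole-coincidence conditions, is \emph{enough} to reconstruct a valid $P_3$ — one must be careful that the reconstructed $P_3$ has rank $3$ (is an honest camera) and that no hidden sign or scaling obstruction appears (the acknowledgements mention a sign error was caught in a proof, which suggests $[e_1^2]_\times$ versus $[e_2^1]_\times$ and the transpose placement genuinely matter). I would handle this by again reducing via the fundamental action to a normal form for $F^{12}$ (e.g. $F^{12} = \diag(0,1,1)$ as in \Cref{ex:counterexample1}), parametrizing $F^{13}$ and $F^{23}$ subject to the hypotheses, and solving explicitly, checking that the identity cuts the solution set down to exactly the compatible locus. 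The degenerate subcases (two centers coinciding, or a fundamental matrix being forced to be zero) should be dispatched separately at the start.
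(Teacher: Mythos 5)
Your overall strategy is the same as the paper's: normalize via the fundamental action so that all epipoles become $[1,0,0]$, place the collinear centers at $[1,0,0,0]$, $[0,1,0,0]$, $[1,1,0,0]$, parametrize the cameras explicitly, read off the compatibility condition on the resulting $2\times 2$ blocks, and translate it back into an identity in the $F^{ij}$ and $e_1^2$. Your fallback plan for the converse (explicit parametrization showing that any admissible pair $G^{12},G^{13}$ is realized by cameras whose third fundamental matrix is then forced) is exactly how the paper closes both directions at once, so that part is fine even though you only sketch it.

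There is, however, one concretely wrong step: the covariance claim
\begin{align}
  (H_1^T F^{12} H_2)^T\,[H_1^{-1} e_1^2]_\times\,(H_1^T F^{13} H_3) \;=\; H_2^T\bigl((F^{12})^T [e_1^2]_\times F^{13}\bigr) H_3
\end{align}
does not follow from the cofactor identity alone. Applying $[Av]_\times=\det(A)\,A^{-T}[v]_\times A^{-1}$ with $A=H_1^{-1}$ gives $[H_1^{-1}e_1^2]_\times=\det(H_1)^{-1}H_1^T[e_1^2]_\times H_1$, so the left-hand side becomes $\det(H_1)^{-1}H_2^T F^{21}(H_1H_1^T)[e_1^2]_\times(H_1H_1^T)F^{13}H_3$, with spurious factors $H_1H_1^T$ that do not cancel for general $H_1$. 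The identity is nevertheless covariant up to a nonzero scale, but for a different reason that you must supply: in the collinear case the column space of $F^{13}$ is $(e_1^2)^\perp$ and the kernel of $F^{21}$ is spanned by $e_1^2$, so for any vector $v$ one has $F^{21}[v]_\times F^{13}=\frac{v^Te_1^2}{\|e_1^2\|^2}\,F^{21}[e_1^2]_\times F^{13}$ (the component of $v$ orthogonal to $e_1^2$ contributes only multiples of $e_1^2$, which $F^{21}$ kills). Applied with $v=(H_1H_1^T)^{-1}e_1^2$, whose pairing with $e_1^2$ is $\|H_1^{-1}e_1^2\|^2\neq 0$ over $\RR$, this rescues your reduction to a single representative. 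The paper sidesteps this entirely by never transporting the identity: it works in normalized coordinates throughout and only at the end observes that $F^{21}[\ell]_\times F^{13}$ is independent of $\ell$ up to scale whenever $\ell^Te_1^2\neq 0$, choosing $\ell=e_1^2$. Either fix works, but as written your ``small lemma'' is insufficient and the step would fail.
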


The conditions of \Cref{thm:compatible_forms,prop: K3 colin} are called the \emph{triple-wise conditions.}

\begin{remark} When we in the proofs below write ``it can be verified that'' or ``it can be checked that'' in relation to the shape of fundamental matrices, we have checked this fact in \texttt{Macaulay2}.
\end{remark}

\begin{proof} Recall that the epipole $e_j^i$ equals $P_j(\ker(P_i))$. It follows that if a solution to $F^{12},F^{13},F^{23}$ consists of collinear cameras, then \Cref{eq_collinear} must be satisfied. Conversely, if \Cref{eq_collinear} is satisfied, any solution must consist of collinear camera centers.

We begin by simplifying the problem using the fundamental action. Let
\begin{align}
    H_i=\begin{bmatrix}
   e_i^k\,\, \textbf{x}_i\,\, \textbf{y}_i
    \end{bmatrix},
\end{align} for any $k\neq i$ and $\textbf{x}_i,\textbf{y}_i\in \RR^3$ such that the determinant is non-zero, meaning $H_i$ is invertible. We get a new triple of fundamental matrices
\begin{align}
G^{ij}=H_i^TF^{ij}H_j.
\end{align}
Write $h_j^i$ for the epipoles of $G^{ij}$. By the fact that $H_j^{-1}e_j^i$ spans $\ker G^{ij}$ we have $h_j^i=H_j^{-1}e_j^i$ (up to scaling). By construction of $H_j$, we then have:
\begin{align}
\begin{aligned}
    &h_1^2=[1,0,0], & &h_2^1=[1,0,0],  & &h_3^1=[1,0,0], \\
    &h_1^3=[1,0,0], & &h_2^3=[1,0,0], & &h_3^2=[1,0,0]. 
\end{aligned}
\end{align}
Since the epipoles span the kernels of $G^{ij}$, we conclude that $G^{ij}$ take the following form
\begin{align} \label{eq: Gform}
\begin{aligned}
    &G^{12}=\begin{bmatrix} 0&0&0\\0&a_{12}&b_{12}\\0&c_{12}&d_{12} \end{bmatrix}, & &G^{13}=\begin{bmatrix} 0&0&0\\0&a_{13}&b_{13}\\0&c_{13}&d_{13} \end{bmatrix}, &      &G^{23}=\begin{bmatrix} 0&0&0\\0&a_{23}&b_{23}\\0&c_{23}&d_{23}  \end{bmatrix},  
\end{aligned}
\end{align}
for some $a_{ij},b_{ij},c_{ij},d_{ij}\in \RR$ making them rank-2. 

We next find conditions on triplets of cameras $P_1,P_2,P_3$ with collinear centers whose fundamental matrices are of the form given by \Cref{eq: Gform}. We may up to $\mathrm{PGL}_4$ action assume that the center of $P_1$ is $[1,0,0,0]$, the center of $P_2$ is $[0,1,0,0]$ and the center of $P_3$ is $[1,1,0,0]$. Fix $P_1$ to be
\begin{align}
        P_1&=\left[\begin{matrix}
        0&1&0&0\\
        0&0&1&0\\
        0&0&0&1
    \end{matrix}\right].
\end{align}
Using the fact that $e_j^i=P_j(\ker P_i)$, we find that $P_2$ and $P_3$ must take the following form:
\begin{align}
    P_2=\left[\begin{matrix} 
        1&0&*&*\\
        0&0&*&*\\
        0&0&*& *    \end{matrix}\right],
     P_3&=\left[\begin{matrix}
        1&-1&*&*\\
        0&0&*&*\\
        0&0&*&*
    \end{matrix}\right].
    \end{align}
One can check that the two right-most elements of the first rows of $P_2$ and $P_3$ do not affect the fundamental matrices. In particular, if $G^{ij}$ are compatible, then one solution must be 
\begin{align}
    P_2=\left[\begin{matrix} 
        1&0&0&0\\
        0&0&\alpha_1&\alpha_2\\
        0&0&\alpha_3&\alpha_4    \end{matrix}\right],
     P_3=\left[\begin{matrix}
        1&-1&0&0\\
        0&0&\beta_1&\beta_2\\
        0&0&\beta_3&\beta_4
    \end{matrix}\right],
    \end{align}
for $\alpha_i$ and $\beta_i$ such that $\alpha_1\alpha_4-\alpha_2\alpha_3\neq 0$ and $\beta_1\beta_4-\beta_2\beta_3\neq 0$. Given such cameras, the fundamental matrices are calculated as  
\begin{align}\begin{aligned}\label{eq: psi P123}
    \psi(P_1,P_2)&=\left[\begin{matrix} 
        0&0&0\\
        0&-\alpha_3&\alpha_1\\
        0&-\alpha_4&\alpha_2    \end{matrix}\right], 
     \psi(P_1,P_3)=\left[\begin{matrix}
        0&0&0\\
        0&-\beta_3&\beta_1\\
        0&-\beta_4&\beta_2
    \end{matrix}\right], \\     \psi(P_2,P_3)&=\left[\begin{matrix} 
        0&0&0\\
        0&-\alpha_4\beta_3+\alpha_3\beta_4&\alpha_4\beta_1-\alpha_3\beta_2\\
        0&\alpha_2\beta_3-\alpha_1\beta_4&-\alpha_2\beta_1+\alpha_1\beta_2    \end{matrix}\right].    
\end{aligned}
    \end{align}
    
Define the $\star$ operator on $2\times 2$ matrices as
\begin{align}
\begin{aligned}
    &\begin{bmatrix}
        v_1 & v_2 \\ v_3 & v_4
    \end{bmatrix}\star\begin{bmatrix}
        w_1 & w_2 \\ w_3 & w_4
    \end{bmatrix}
    :=\begin{bmatrix}
        v_3w_1-v_1w_3 & v_3w_2-v_1w_4\\ v_4w_1-v_2w_3 & v_4w_2-v_2w_4
    \end{bmatrix}
    =\begin{bmatrix}
        v_3 & v_1 \\ v_4 & v_2
    \end{bmatrix}\begin{bmatrix}
        w_1 & w_2 \\ -w_3 & -w_4
    \end{bmatrix}.
\end{aligned}
\end{align}
Then, by \Cref{eq: psi P123}, $G^{ij}$ on the form \Cref{eq: psi P123} are compatible if and only if (up to scaling) we have
\begin{align}
\begin{aligned}\label{eq: compat aij}
    &\begin{bmatrix}
        a_{12} & b_{12} \\ c_{12} & d_{12}
    \end{bmatrix}\star\begin{bmatrix}
        a_{13} & b_{13} \\ c_{13} & d_{13}
    \end{bmatrix}
    = \begin{bmatrix}
        a_{23} & b_{23} \\ c_{23} & d_{23}
    \end{bmatrix}.
\end{aligned}
\end{align}
By the construction of our fundamental action, we have
\begin{align}
\begin{aligned}
    a_{ij}&=[0,1,0]G^{ij}[0,1,0]^T=\textbf{x}_i^TF^{ij}\textbf{x}_j, &
    b_{ij}&=[0,1,0]G^{ij}[0,0,1]^T=\textbf{x}_i^TF^{ij}\textbf{y}_j,\\
    c_{ij}&=[0,0,1]G^{ij}[0,1,0]^T=\textbf{y}_i^TF^{ij}\textbf{x}_j, &
    d_{ij}&=[0,0,1]G^{ij}[0,0,1]^T=\textbf{y}_i^TF^{ij}\textbf{y}_j.
\end{aligned}
\end{align}
In the below, and throughout this section, we skip the transpose notation and write for instance $\textbf{x}_iF^{ij}\textbf{x}_j$ instead of $\textbf{x}_i^TF^{ij}\textbf{x}_j$. We get
\begin{align}
\begin{aligned}
    \begin{bmatrix}
        \textbf{x}_1F^{12}\textbf{x}_2 & \textbf{x}_1F^{12}\textbf{y}_2 \\ \textbf{y}_1F^{12}\textbf{x}_2  & \textbf{y}_1F^{12}\textbf{y}_2
    \end{bmatrix}\star&\begin{bmatrix}
\textbf{x}_1F^{13}\textbf{x}_3 & \textbf{x}_1F^{13}\textbf{y}_3 \\ \textbf{y}_1F^{13}\textbf{x}_3 & \textbf{y}_1F^{13}\textbf{y}_3
    \end{bmatrix}    =\begin{bmatrix}
        \textbf{x}_2F^{23}\textbf{x}_3 &  \textbf{x}_2F^{23}\textbf{y}_3\\  \textbf{y}_2F^{23}\textbf{x}_3  & \textbf{y}_2F^{23}\textbf{y}_3
    \end{bmatrix}.
\end{aligned}
\end{align}
However,
\begin{align}
    \begin{bmatrix}
    \textbf{x}_iF^{ij}\textbf{x}_j & \textbf{x}_iF^{ij}\textbf{y}_j\\  \textbf{y}_iF^{ij}\textbf{x}_j  & \textbf{y}_iF^{ij}\textbf{y}_j
    \end{bmatrix}=\begin{bmatrix}
        \textbf{x}_i^T\\ \textbf{y}_i^T\end{bmatrix}F^{ij}\begin{bmatrix}
        \textbf{x}_j & \textbf{y}_j
    \end{bmatrix},
\end{align}
and therefore, 
   \begin{align}
   \begin{aligned}
    &\begin{bmatrix}
        \textbf{x}_1F^{12}\textbf{x}_2 & \textbf{x}_1F^{12}\textbf{y}_2  \\  \textbf{y}_1F^{12}\textbf{x}_2 & \textbf{y}_1F^{12}\textbf{y}_2
    \end{bmatrix}\star\begin{bmatrix}
\textbf{x}_1F^{13}\textbf{x}_3 &  \textbf{x}_1F^{13}\textbf{y}_3\\ \textbf{y}_1F^{13}\textbf{x}_3  & \textbf{y}_1F^{13}\textbf{y}_3
    \end{bmatrix}\\
    =&\begin{bmatrix}
        \textbf{x}_2^T\\ \textbf{y}_2^T\end{bmatrix}F^{21}\begin{bmatrix}
        \textbf{y}_1 & \textbf{x}_1
    \end{bmatrix}\begin{bmatrix}
        \textbf{x}_1^T\\ -\textbf{y}_1^T\end{bmatrix}F^{13}\begin{bmatrix}
        \textbf{x}_3 & \textbf{y}_3
    \end{bmatrix}\\
    =& \begin{bmatrix}
        \textbf{x}_2^T\\ \textbf{y}_2^T\end{bmatrix}F^{23}\begin{bmatrix}
        \textbf{x}_3 & \textbf{y}_3
    \end{bmatrix}.
   \end{aligned}
\end{align} 
Since this holds for generic choices of $\textbf{x}_2,\textbf{y}_2,\textbf{x}_3,\textbf{y}_3$, we conclude that, projectively,
   \begin{align}
    F^{21}\begin{bmatrix}
        \textbf{y}_1 & \textbf{x}_1
    \end{bmatrix}\begin{bmatrix}
        \textbf{x}_1^T\\ -\textbf{y}_1^T\end{bmatrix}F^{13}=F^{23},
\end{align} 
for all $\textbf{x}_1,\textbf{y}_1$ such that $[e_1^2\; \textbf{x}_1\; \textbf{y}_1]$ is invertible. Further, 
   \begin{align}
    \begin{bmatrix}
        \textbf{y}_1 & \textbf{x}_1
    \end{bmatrix}\begin{bmatrix}
        \textbf{x}_1^T\\ -\textbf{y}_1^T\end{bmatrix}
\end{align} 
is skew-symmetric and equals $[\ell]_\times$ for $\ell=\textbf{x}_1\times \textbf{y}_1\in \RR^3$. Then choosing $\textbf{x}_1,\textbf{y}_1$ such that $\ell=e_1^2$, we have over the real numbers that $[e_1^2\; \textbf{x}_1\; \textbf{y}_1]$ is full-rank. In other words, 
   \begin{align}\label{eq: e12}
    F^{21}[e_1^2]_\times F^{13}=F^{23}
\end{align} 
is a necessary and sufficient condition for compatibility.
\end{proof}

\begin{remark} In the complex setting, it does not always suffice to put $\ell=e_1^2$, because it could be the case that $(e_1^2)^Te_1^2=0$. Then $\ell$ should be any vector such that $\ell^Te_1^2\neq 0$.
\end{remark}



\subsection{$K_4$}\label{ss: K4}

We start this section with a counterexample to the previous belief that triple-wise compatibility is enough to ensure full compatibility.
\begin{example}
\label{ex:counterexample2}
\textnormal{Consider the fundamental matrices:}
\begin{align}
\begin{aligned}
    &F^{12}=\begin{bmatrix} 0&0&0\\0&0&1\\0&1&0 \end{bmatrix}, & &F^{13}=\begin{bmatrix} 0&0&1\\0&0&0\\0&1&0 \end{bmatrix}, & &F^{14}=\begin{bmatrix} 0&0&1\\0&1&0\\0&0&0 \end{bmatrix},\\ &F^{23}=\begin{bmatrix} 0&0&1\\0&0&0\\1&0&0 \end{bmatrix}, & &F^{24}=\begin{bmatrix} 0&0&1\\1&0&0\\0&0&0 \end{bmatrix}, & &F^{34}=\begin{bmatrix} 0&1&0\\2&0&0\\0&0&0 \end{bmatrix},  
\end{aligned}
\end{align}
\textnormal{with epipoles:}
\begin{align}\begin{aligned}
    &{e}_1^2=[1,0,0], & &{e}_2^1=[1,0,0], & &{e}_3^1=[1,0,0], & &{e}_4^1=[1,0,0],\\  
    &{e}_1^3=[0,1,0], & &{e}_2^3=[0,1,0], & &{e}_3^2=[0,1,0], & &{e}_4^2=[0,1,0],\\        
    &{e}_1^4=[0,0,1], & &{e}_2^4=[0,0,1], & &{e}_3^4=[0,0,1], & &{e}_4^3=[0,0,1].
\end{aligned}
\end{align}
 \textnormal{It can easily be verified that these six matrices satisfy the conditions in \Cref{thm:compatible_forms}. Nonetheless, no solution exists. Any attempt to find four cameras will end up matching at most five of the six fundamental matrices. We will soon see that this is because the sextuple does not satisfy the conditions in \Cref{thm: 4tuple-condition}.}
\hfill$\diamond$
\end{example}

Before we get to the main results, we list the possible configurations of camera centers in the case of four cameras (six fundamental matrices). These are illustrated in \cref{fig:4cases}. By \Cref{prop:equivalence}, these correspond to the equivalence classes of compatible fundamental matrices. Each of these will be recognizable from the epipoles $e_i^j$:

\begin{enumerate}[leftmargin =3.5em]
    \item [Case 1:] Cameras are in generic position, meaning no plane contains all four centers. Epipoles are in generic position, meaning in each image, the three epipoles do not lie on a line.
    \item [Case 2:] All camera centers lie in the same plane, but no three lie on a line. In each image, the three epipoles are distinct and lie on a line.
    \item [Case 3:] Precisely three camera centers lie on a line. In the three corresponding images, the epipoles corresponding to the other two cameras are equal, with the third one different from these two. In the final image, the three epipoles are distinct and lie on a line. 
    \item [Case 4:] All four camera centers lie on a line. In each image, the three epipoles coincide.
\end{enumerate}

\begin{figure}
\begin{center}
\includegraphics[width = \textwidth]{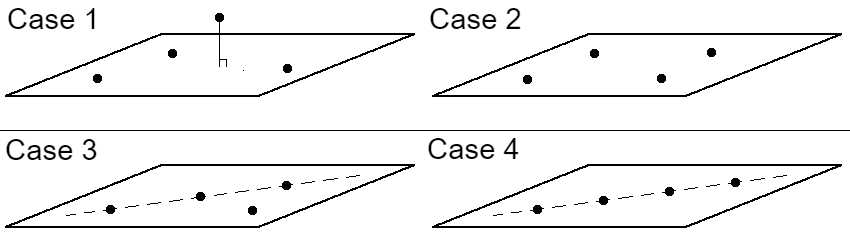}
\end{center}
\caption{Illustration of the 4 cases.}\label{fig:4cases}
\end{figure}

These are the only possible configurations of four cameras, so any compatible sextuple $\Set{F^{ij}}$ must have its epipoles in one of the configurations above. If we have, for instance, collinear epipoles in one image, but not all, the fundamental matrices can not be compatible. In Cases 3 and 4, the configuration of the epipoles together with the triple-wise conditions from \Cref{ss: K3} ensure compatibility. This is not true for Cases 1 and 2; here we need additional constraints. We cover all cases in sequence. We recall the epipolar numbers: $\textbf{e}_{sijt}=(e_i^s)^TF^{ij}e_j^t$.

\begin{theorem}[Case 1] 
\label{thm: 4tuple-condition} Let $\Set{F^{ij}}$ be a sextuple of fundamental matrices such that the three epipoles in each image do not lie on a line. 
Then $\Set{F^{ij}}$ is compatible if and only if the triple-wise conditions hold and 
\begin{align}
\begin{aligned} \label{eq: 4-tuple}
&\textnormal{\textbf{e}}_{4123}\textnormal{\textbf{e}}_{2134}\textnormal{\textbf{e}}_{3142}\textnormal{\textbf{e}}_{4231}\textnormal{\textbf{e}}_{1243}\textnormal{\textbf{e}}_{2341}=\textnormal{\textbf{e}}_{3124}\textnormal{\textbf{e}}_{4132}\textnormal{\textbf{e}}_{2143}\textnormal{\textbf{e}}_{1234}\textnormal{\textbf{e}}_{3241}\textnormal{\textbf{e}}_{1342}.
\end{aligned}
\end{align}
\end{theorem}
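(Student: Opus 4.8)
The plan is to mimic the strategy behind \Cref{prop: K3 colin}: normalize the configuration with the fundamental action, reduce to an explicitly parametrized family of cameras, and read off the single remaining constraint. Since by hypothesis the three epipoles in each image span $\PP^2$, \Cref{prop:fundamental_action_preserves_compibility} lets us choose $H_i\in\mathrm{GL}_3$ carrying, for each $i$, the epipoles $e_i^j$ $(j\neq i)$ to the three standard basis vectors in a fixed order (the order occurring in \Cref{ex:counterexample2}); by \Cref{le: epip inv} the epipolar numbers — hence both sides of \Cref{eq: 4-tuple} — and the triple-wise conditions are unchanged, so it suffices to treat the normalized sextuple. After normalization each $F^{ij}$ is a rank-$2$ matrix whose kernel and left-kernel are coordinate lines, so it is supported on a $2\times2$ block of coordinate positions. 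Each triple-wise condition then reads off a single diagonal entry of some $F^{ij}$, and since each pair $\{i,j\}$ lies in exactly two triples, two of the four entries of $F^{ij}$ are forced to vanish, leaving $F^{ij}$ with exactly two nonzero entries in anti-diagonal position of its block, both necessarily nonzero by rank.

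Next I would parametrize potential solutions. If a solution exists its centers cannot be coplanar — coplanar (or collinear) centers force collinear epipoles in every image, contrary to the hypothesis — so after a residual $\PGL_4$ we may take the centers to be the four coordinate points of $\PP^3$. Then the $i$-th column of $P_i$ is zero and its remaining columns are scalars $s_{ij}\in\RR\setminus\{0\}$ times fixed standard basis vectors (prescribed by the epipoles), i.e.\ $P_i$ is a column-deleted scaled permutation matrix. Expanding the $6\times6$ determinant that defines $\psi(P_i,P_j)$ shows it has exactly the two-nonzero-entry shape above, with entries $\pm\,s_{i,t}s_{j,s}$ and $\pm\,s_{i,s}s_{j,t}$, where $\{s,t\}=\{1,2,3,4\}\setminus\{i,j\}$. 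Hence, once the triple-wise conditions hold, the normalized sextuple is compatible if and only if the system of twelve binomial equations $s_{i,t}s_{j,s}=\pm(\text{nonzero entry of }F^{ij})$ has a solution in nonzero reals.

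Necessity of \Cref{eq: 4-tuple} now follows by substitution: each epipolar number appearing there is, up to sign, one of the monomials $s_{i,t}s_{j,s}$, and multiplying the six factors on each side one finds that both products equal $\pm\prod_{i\neq j}s_{ij}$ — the product of all twelve parameters — so (after checking that the signs agree) \Cref{eq: 4-tuple} holds for every compatible sextuple. For sufficiency I would show that \Cref{eq: 4-tuple} is the \emph{only} obstruction: the monomial map $s\mapsto\big(\text{nonzero entries of the }F^{ij}\big)$, restricted to $s_{ij}\neq0$, has image cut out — inside the torus of triple-wise-reduced sextuples — by this single binomial, which can be checked by eliminating the $s_{ij}$ from the corresponding ideal in \texttt{Macaulay2}. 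Equivalently, fix $P_1,P_2$ from $F^{12}$, invoke \Cref{lem:triple_has_unique_solution} on $\{F^{12},F^{13},F^{23}\}$ and on $\{F^{12},F^{14},F^{24}\}$ to pin down $P_3$ and $P_4$ uniquely (the epipoles being distinct), and verify that the one remaining requirement $\psi(P_3,P_4)\propto F^{34}$ simplifies to \Cref{eq: 4-tuple}.

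The main obstacle is precisely this last step — proving that one equation suffices — together with the sign bookkeeping over $\RR$: one must confirm that the exponent matrix of the twelve-term binomial system has rank $11$ (so there is a unique relation) and that solving it imposes no further sign or inequality condition beyond \Cref{eq: 4-tuple}. The determinant computation for $\psi(P_i,P_j)$ and the verification that the two products of six epipolar numbers coincide are routine but must be done carefully to fix the signs in \Cref{eq: 4-tuple}; evaluating both products on \Cref{ex:counterexample2}, where they come out to $2$ and $1$, is a useful consistency check.
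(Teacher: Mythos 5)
Your proposal is correct and follows essentially the same route as the paper's proof: normalize the epipoles to the standard basis via the fundamental action, note that the triple-wise conditions force each normalized $F^{ij}$ to have exactly two nonzero entries, place the four (necessarily independent) centers at the coordinate points, and reduce compatibility to a binomial system in the camera scalars whose single eliminant is \Cref{eq: 4-tuple}. The step you flag as the main obstacle --- that this one binomial is the only obstruction --- is resolved in the paper exactly along the lines of your second suggestion: after normalizing the scalars ($\alpha_1^j=1$, $\alpha_i^1=1$), five of the six equations determine the remaining unknowns and the sixth is then implied by the binomial relation, so no further sign or inequality condition arises.
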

\begin{remark}  The condition that the epipoles in each image do not lie on a line is equivalent to all epipolar number $\textnormal{\textbf{e}}_{ijkl}$ being non-zero for distinct $i,j,k,l$. In Cases $2,3$ and $4$, the three epipoles in each image lie on a line. This is equivalent to all epipolar numbers $\textnormal{\textbf{e}}_{ijkl}$ being zero for distinct $i,j,k,l$. 
\end{remark}

\begin{proof} The triple-wise conditions are necessary for compatibility, so we assume that they are satisfied and prove that in this case compatibility is equivalent to \Cref{eq: 4-tuple} being satisfied. We begin by simplifying the problem. Let
\begin{align}
    H_i=\begin{bmatrix}
    e_i^j\,\, e_i^k\,\, e_i^l
    \end{bmatrix}.
\end{align}
This $3\times3$ matrix is of full-rank and takes the three coordinate points to the three epipoles in the $i$-th image. Using this as our fundamental action, we get a new sextuple of fundamental matrices
\begin{align}G^{ij}=H_i^TF^{ij}H_j.
\end{align}
Since the fundamental action preserves compatibility, the sextuple $\Set{G^{ij}}$ is compatible if and only if $\Set{F^{ij}}$ is. Note that the epipoles of $G^{ij}$, denoted by $h_j^i$, are:
\begin{align}
\begin{aligned}
    &{h}_1^2=[1,0,0], & &{h}_2^1=[1,0,0], & &{h}_3^1=[1,0,0], & &{h}_4^1=[1,0,0],\\  
    &{h}_1^3=[0,1,0], & &{h}_2^3=[0,1,0], & &{h}_3^2=[0,1,0], & &{h}_4^2=[0,1,0],\\        
    &{h}_1^4=[0,0,1], & &{h}_2^4=[0,0,1], & &{h}_3^4=[0,0,1], & &{h}_4^3=[0,0,1].
\end{aligned}
\end{align}
Moreover, since $G^{ij}$ satisfy the triple-wise conditions (we assumed $F^{ij}$ did, and these are preserved under fundamental action), it follows that the six matrices must be on the form: 
\begin{align}\label{eq: Gform c1}
\begin{aligned}
    &G^{12}=\begin{bmatrix} 0&0&0\\0&0&x_{12}\\0&y_{12}&0 \end{bmatrix},      &G^{13}=\begin{bmatrix} 0&0&x_{13}\\0&0&0\\0&y_{13}&0 \end{bmatrix},    &  &G^{14}=\begin{bmatrix} 0&0&x_{14}\\0&y_{14}&0\\0&0&0 \end{bmatrix},\\
    &G^{23}=\begin{bmatrix} 0&0&x_{23}\\0&0&0\\y_{23}&0&0 \end{bmatrix}, &G^{24}=\begin{bmatrix} 0&0&x_{24}\\y_{24}&0&0\\0&0&0 \end{bmatrix},    &  &G^{34}=\begin{bmatrix} 0&x_{34}&0\\y_{34}&0&0\\0&0&0 \end{bmatrix}. 
\end{aligned}
\end{align}
The sextuple $\Set{G^{ij}}$ is compatible if and only if there exists a reconstruction consisting of 4 cameras $P_i$. Since the epipoles do not lie on a line, any such reconstruction must have 4 linearly independent centers. We are free to choose coordinates in $\p3$ without affecting compatibility, so we take the four camera centers (assuming cameras exist) to be the four unit vectors. Furthermore, we know that the epipoles satisfy
\begin{align}
\label{eq:epipole}
    h_i^j=P_i(\ker(P_j)).
\end{align}
So if $\Set{G^{ij}}$ has a reconstruction $\Set{P_i}$, it must be on the form:
\begin{align}\begin{aligned}
    &P_1=\left[\begin{matrix} 0&\alpha_1^1&0&0\\0&0&\alpha_1^2&0\\0&0&0&\alpha_1^3 \end{matrix}\right], 
    & &P_2=\left[\begin{matrix} \alpha_2^1&0&0&0\\0&0&\alpha_2^2&0\\0&0&0&\alpha_2^3 \end{matrix}\right],\\
    &P_3=\left[\begin{matrix} \alpha_3^1&0&0&0\\0&\alpha_3^2&0&0\\0&0&0&\alpha_3^3 \end{matrix}\right],
    & &P_4=\left[\begin{matrix} \alpha_4^1&0&0&0\\0&\alpha_4^2&0&0\\0&0&\alpha_4^3&0 \end{matrix}\right],   
\end{aligned}
\end{align}
where $\alpha_i^j$ are scalars.
Since the fundamental matrices are of rank-2 and the cameras are rank-3, all the $\alpha_i^j$, as well as the $x_{ij}$ and $y_{ij}$ are non-zero. Computing the fundamental matrices of these four cameras, and setting them equal to the $G^{ij}$, we get the following six equations:
\begin{align}\label{eq: elimsystem}
\begin{aligned}
    &x_{12}\alpha_1^2\alpha_2^3=-y_{12}\alpha_1^3\alpha_2^2, & &x_{13}\alpha_1^1\alpha_3^3=-y_{13}\alpha_1^3\alpha_3^2, & &x_{14}\alpha_1^1\alpha_4^3=-y_{14}\alpha_1^2\alpha_4^2, \\
    &x_{23}\alpha_2^1\alpha_3^3=-y_{23}\alpha_2^3\alpha_3^1, & &x_{24}\alpha_2^1\alpha_4^3=-y_{24}\alpha_2^2\alpha_4^1, & &x_{34}\alpha_3^1\alpha_4^2=-y_{34}\alpha_3^2\alpha_4^1.
\end{aligned}
\end{align}
Eliminating the variables $\alpha_i^j$, we are left with a single polynomial,
\begin{align}
\label{eq:4_tuple_as_x_y}
    x_{12}y_{13}x_{14}x_{23}y_{24}x_{34}-y_{12}x_{13}y_{14}y_{23}x_{24}y_{34}=0.
\end{align}
This tells us that \Cref{eq: elimsystem} implies \Cref{eq:4_tuple_as_x_y}, and we are left to argue that if $x_{ij},y_{ij}$ are non-zero numbers such that \Cref{eq:4_tuple_as_x_y} holds, then there are non-zero $\alpha_i^j$ such that \Cref{eq: elimsystem} holds. Note that we can assume $\alpha_1^j=1$ by $\mathrm{PGL}_4$ action and that $\alpha_i^1=1$ by scaling. Writing $\lambda_{ij}=x_{ij}/y_{ij}$, we then aim to find non-zero $\alpha_i^j$ such that 
\begin{align}
\begin{aligned}
   \lambda_{12}\alpha_2^3&=\alpha_2^2, &  \lambda_{13}\alpha_3^3&=\alpha_3^2, & \lambda_{14}\alpha_4^3&=\alpha_4^2,\\
\lambda_{23}\alpha_3^3&=\alpha_2^3, &  \lambda_{24}\alpha_4^3&=\alpha_2^2, & \lambda_{34}\alpha_4^2&=\alpha_3^2.
\end{aligned}
\end{align}
It is clear that we can find non-zero $\alpha_i^j$ that solve the first five equations. However, this is enough because using $\lambda_{12}\lambda_{14}\lambda_{23}\lambda_{34}=\lambda_{13}\lambda_{24}$, the sixth equation $\lambda_{34}\alpha_4^2=\alpha_3^2$ is implied by the other five through substitution.

It follows that the set $\Set{G^{ij}}$ is compatible if and only if  \Cref{eq:4_tuple_as_x_y} is satisfied. Finally, we can express the $x_{ij}$ and $y_{ij}$ in terms of $F^{ij}$ and $e_i^j$, for instance we have
\begin{align}
\begin{aligned}
x_{12}&=(h_1^3)^TG^{12}h_2^4=(h_1^3)^TH_1^TF^{12}H_2h_2^4=(e_1^3)^TF^{12}e_2^4.
\end{aligned}
\end{align}
Making these substitutions for all the $x_{ij}$ and $y_{ij}$, we get \Cref{eq: 4-tuple}.
\end{proof}

\begin{theorem}[Case 2]\label{thm: Case 2} Let $\Set{F^{ij}}$ be a sextuple of fundamental matrices whose epipoles in each image are distinct and lie on a line. Then $\Set{F^{ij}}$ is compatible if and only if the triple-wise conditions hold,
\begin{align}
\begin{aligned}\label{eq: ijk F}
    \langle F^{jk}e_k^i,F^{jl}e_l^i \rangle \langle F^{kj}e_j^i,F^{kl}e_l^i \rangle\langle F^{lj}e_j^i,F^{lk}e_k^i \rangle +\|F^{lj}e_j^i\|^2\|F^{jk}e_k^i\|^2\|F^{kl}e_l^i\|^2 =0 
\end{aligned}
\end{align}
for all distinct $i,j,k,l$ satisfying $j<k<l$, and if for $\textbf{x}_i=F^{ij}e_j^l$ with $j<k<l$, we have
\begin{align}
\begin{aligned}
\label{eq:long_equation}
    -\frac{e_2^3F^{24}\textbf{x}_4}{\textbf{x}_2F^{24}e_4^1}\frac{\textbf{x}_1F^{12}\textbf{x}_2}{\textbf{x}_1F^{12}e_2^3}+  \frac{e_3^2F^{34}\textbf{x}_4}{\textbf{x}_3F^{34}e_4^1}\frac{\textbf{x}_1F^{13}\textbf{x}_3}{\textbf{x}_1F^{13}e_3^2} +\frac{e_3^1F^{34}\textbf{x}_4}{\textbf{x}_3F^{34}e_4^1}\frac{\textbf{x}_2F^{23}\textbf{x}_3}{\textbf{x}_2F^{23}e_3^1}+\\
    +\frac{e_3^2F^{34}\textbf{x}_4}{e_1^2F^{14}\textbf{x}_4}\frac{e_1^2F^{13}\textbf{x}_3}{\textbf{x}_1F^{13}e_3^2}\frac{\textbf{x}_1F^{14}\textbf{x}_4}{\textbf{x}_3F^{34}e_4^1}+\frac{\textbf{x}_2F^{24}\textbf{x}_4}{\textbf{x}_2F^{24}e_4^1}-\frac{\textbf{x}_3F^{34}\textbf{x}_4}{\textbf{x}_3F^{34}e_4^1}=0.
\end{aligned}
\end{align}
\end{theorem}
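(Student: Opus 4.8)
The plan is to mirror the strategy used in the proof of Theorem~\ref{thm: 4tuple-condition} (Case~1): normalize the fundamental matrices via a suitable fundamental action so that the epipoles become standard coordinate points, read off the resulting ``shape'' of the $G^{ij}$ from the triple-wise conditions, parametrize an arbitrary would-be reconstruction $\{P_i\}$ by the fact that the centers must be coplanar (and, here, in generic-coplanar position — no three collinear), then impose $\psi(P_i,P_j)=G^{ij}$ and eliminate the camera parameters. The key structural difference from Case~1 is that the four centers now span only a plane, so after choosing coordinates in $\PP^3$ we may put the four centers at four generic points of the plane $\{x_4=0\}$ (say the three coordinate points and $[1,1,1,0]$). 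This makes the camera matrices depend on more free parameters than in Case~1, and consequently the elimination ideal is no longer principal: one expects the codimension-one part of the constraint variety to be cut out by \emph{two} polynomials, which after back-substitution via $x_{ij}=(e^3_1)^TF^{12}e^4_2$-type identities (Lemma~\ref{le: epip inv}) become \Cref{eq: ijk F} and \Cref{eq:long_equation}.

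\textbf{Step 1 (normalization).} For each $i$, the three epipoles $e^j_i,e^k_i,e^l_i$ in image $i$ are distinct and collinear; choose $H_i\in\mathrm{GL}_3$ sending, say, $e^j_i\mapsto[1,0,0]$, $e^l_i\mapsto[0,0,1]$, $e^k_i\mapsto[1,0,1]$ (possible since they are collinear, and a generic such normalization keeps the line $\{y=0\}$). Set $G^{ij}=H_i^TF^{ij}H_j$; by \Cref{prop:fundamental_action_preserves_compibility} compatibility is unchanged, and by \Cref{le: epip inv} the epipolar-type quantities transform correctly. Using that the columns/rows of $G^{ij}$ are killed by the normalized epipoles, deduce the explicit sparse form of each $G^{ij}$, with the remaining entries constrained by the triple-wise relations of \Cref{thm:compatible_forms} and \Cref{prop: K3 colin} (the relevant triples now are the collinear ones within the plane only when three epipoles coincide — here they are distinct, so the Case~1-type triple relations \Cref{eq_compatible} apply).

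\textbf{Step 2 (reconstruction ansatz and elimination).} Choose projective coordinates so that the four centers are $c_1=[1,0,0,0],c_2=[0,1,0,0],c_3=[0,0,1,0],c_4=[1,1,1,0]$. Each $P_i$ is a $3\times4$ camera with $P_ic_i=0$ and $P_ic_m=(\text{scalar})\cdot h^m_i$ for $m\neq i$; solving these linear conditions leaves a bounded number of free scalars per camera. Impose $\psi(P_i,P_j)=G^{ij}$ for the six pairs (equivalently, $P_i^TG^{ij}P_j$ skew, by \Cref{prop:properties_of_the_fundamental_matrix}(5)), obtaining a polynomial system in the camera parameters and the entries of the $G^{ij}$. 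Eliminate the camera parameters (in \texttt{Macaulay2}, over $\QQ$, invoking \Cref{le: from Q to R} to transfer the conclusion to $\RR$), and saturate away the loci where some $\alpha$'s vanish or where the epipoles degenerate, via \Cref{thm: sat}. The output should be the ideal generated by the normalized forms of \Cref{eq: ijk F} and \Cref{eq:long_equation}; converting entries back through identities like $\textbf{x}_i=F^{ij}e_j^l$ and $x_{ij}=(e^a_i)^TF^{ij}e^b_j$ yields the stated equations.

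\textbf{Step 3 (sufficiency).} For the converse, assume the triple-wise conditions plus \Cref{eq: ijk F} and \Cref{eq:long_equation}. As in Case~1, exhibit a solution explicitly: set the $\PP^3$-coordinate and scaling freedoms to normalize some camera entries to $1$, solve the resulting (now consistent) subsystem of the skew-symmetry equations for the remaining $\alpha$'s, and check that the leftover equations are implied by \Cref{eq: ijk F} and \Cref{eq:long_equation} through substitution. The main obstacle I anticipate is Step~2: unlike Case~1, the elimination is not over a single monomial relation — the larger parameter count from coplanar-but-generic centers means the computation is genuinely heavier, the saturation has to strip several spurious components (coincident epipoles, rank drops), and identifying the two generators of the elimination ideal with the \emph{specific} invariant forms \Cref{eq: ijk F}–\Cref{eq:long_equation} (rather than some other generating set) requires a careful rewriting in terms of the $F^{ij}$ and epipoles. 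The somewhat asymmetric, ``$j<k<l$''-indexed shape of \Cref{eq: ijk F} and the long rational expression \Cref{eq:long_equation} strongly suggest that this identification was done by hand after the \texttt{Macaulay2} elimination, guided by the geometry of back-projected lines (\Cref{le: epip-coplane}): \Cref{eq: ijk F} encodes that for each choice of $i$ the four back-projected lines through $c_i$ and the other centers are coplanar in a consistent way, while \Cref{eq:long_equation} is the single remaining ``global'' closure condition analogous to \Cref{eq: 4-tuple}.
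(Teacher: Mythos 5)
Your plan is essentially the paper's own proof: normalize by the fundamental action $H_i=[e_i^j\ e_i^k\ \textbf{x}_i]$ so the collinear epipoles become standard points on a line, place the four coplanar centers at $[1,0,0,0],[0,1,0,0],[0,0,1,0],[1,1,1,0]$, parametrize the candidate cameras, set $\psi(P_i,P_j)=G^{ij}$, eliminate the camera parameters, and rewrite the resulting conditions (one per triple $j<k<l$ plus one global relation) via $\textbf{x}_i=F^{ij}e_j^l$ to obtain \Cref{eq: ijk F} and \Cref{eq:long_equation}. The only cosmetic differences are your choice of normalized epipole coordinates and your correct but slightly loose count of the elimination-ideal generators (there are four equations of type \Cref{eq: ijk F}, one for each triple, plus \Cref{eq:long_equation}).
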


\begin{remark}
As \Cref{eq:long_equation} is already oversaturated with sub/superscript, we are omitting the transpose symbol from these equations. It is to be understood that the 3-vectors $\textbf{x}_i$ and $e_i^j$ are column-vectors when directly right of a fundamental matrix, and row-vectors when to the left.
\end{remark}

\begin{proof}
Like in the previous proof, we begin by assuming the triple-wise conditions are satisfied. The three epipoles in each image lie on a line and therefore we fix a scaling such that for each $i$ we have $e_i^l=e_i^j+e_i^k$, where $l>k>j$. Let
\begin{align}
H_i=\begin{bmatrix}
    e_i^j\,\, e_i^k \,\, \textbf{x}_i
    \end{bmatrix}.
\end{align}
Note that $(e_i^j)^T\textbf{x}_i$ and $(e_i^k)^T\textbf{x}_i$ for $\textbf{x}_i$ in the statement are both zero, so $H_i$ is of full-rank. Using this as our fundamental action, we get a new sextuple of fundamental matrices
\begin{align}    G^{ij}=H_i^TF^{ij}H_j.
\end{align}
Since the fundamental action preserves compatibility, the sextuple $\Set{G^{ij}}$ is compatible if and only if $\Set{F^{ij}}$ is. Note that the epipoles of $G^{ij}$ are as follows:
\begin{align}
\begin{aligned}
    &{h}_1^2=[1,0,0], & &{h}_2^1=[1,0,0], & &{h}_3^1=[1,0,0], & &{h}_4^1=[1,0,0],\\  
    &{h}_1^3=[0,1,0], & &{h}_2^3=[0,1,0], & &{h}_3^2=[0,1,0], & &{h}_4^2=[0,1,0],\\  
    &{h}_1^4=[1,1,0], & &{h}_2^4=[1,1,0], & &{h}_3^4=[1,1,0], & &{h}_4^3=[1,1,0].
\end{aligned}
\end{align}
With these epipoles and the fact that the $G^{ij}$ satisfy the triple-wise conditions (we assumed $F^{ij}$ did, and these are preserved under fundamental action), it follows that the six matrices must be on the form:
\begin{align}\label{eq: Gform c2}
\begin{aligned}
    &G^{12}=\left[\begin{matrix} 0&0&0\\0&0&x_{12}\\0&y_{12}&z_{12} \end{matrix}\right], & &G^{13}=\left[\begin{matrix} 0&0&x_{13}\\0&0&0\\0&y_{13}&z_{13} \end{matrix}\right], & &G^{14}=\left[\begin{matrix} 0&0&x_{14}\\0&0&-x_{14}\\0&y_{14}&z_{14} \end{matrix}\right],\\
    &G^{23}=\left[\begin{matrix} 0&0&x_{23}\\0&0&0\\y_{23}&0&z_{23} \end{matrix}\right], & &G^{24}=\left[\begin{matrix} 0&0&x_{24}\\0&0&-x_{24}\\y_{24}&0&z_{24} \end{matrix}\right],
    & &G^{34}=\left[\begin{matrix} 0&0&x_{34}\\0&0&-x_{34}\\y_{34}&-y_{34}&z_{34} \end{matrix}\right].
\end{aligned}
\end{align}
The sextuple $\Set{G^{ij}}$ is compatible if and only if there exists a reconstruction consisting of 4 cameras $P_i$ with centers that lie in a plane, but no three collinear, since the three epipoles are collinear in each image. 
We are free to choose coordinates in $\p3$ without changing the fundamental matrices, so we take the four camera centers (assuming they exist) to be $[1,0,0,0]$, $[0,1,0,0]$, $[0,0,1,0]$, and $[1,1,1,0]$. Furthermore, by the definition of the epipole, we know that the epipoles satisfy
\begin{align}
    h_i^j=P_i(\ker(P_j)). 
\end{align}
So if $\Set{G^{ij}}$ has a reconstruction $\Set{P_i}$, it must be on the form:
\begin{align}
\begin{aligned}
    &P_1=\left[\begin{matrix} 0&1&0&\alpha_1^1\\0&0&1&\alpha_1^2\\0&0&0&\alpha_1^3 \end{matrix}\right],  & &P_2=\left[\begin{matrix} 1&0&0&\alpha_2^1\\0&0&1&\alpha_2^2\\0&0&0&\alpha_2^3 \end{matrix}\right],\\
    &P_3=\left[\begin{matrix} 1&0&0&\alpha_3^1\\0&1&0&\alpha_3^2\\0&0&0&\alpha_3^3 \end{matrix}\right],  & &P_4=\left[\begin{matrix} 1&0&-1&\alpha_4^1\\0&1&-1&\alpha_4^2\\0&0&0&\alpha_4^3 \end{matrix}\right],
\end{aligned}
\end{align}
where the $\alpha_i^j$ are scalars. Since the fundamental matrices are of rank 2 and the cameras of rank 3, the four scalars $\alpha_i^3$, as well as all the $x_{ij}$ and $y_{ij}$ are non-zero. Computing the fundamental matrices of these four cameras, and setting them equal to the $G^{ij}$, we get the following set of equations: 
\begin{align}
\begin{aligned}
    &\frac{x_{12}}{y_{12}}=-\frac{\alpha_1^3}{\alpha_2^3},  & &\frac{x_{13}}{y_{13}}=-\frac{\alpha_1^3}{\alpha_3^3}, & &\frac{x_{14}}{y_{14}}=-\frac{\alpha_1^3}{\alpha_4^3},\\ &\frac{x_{23}}{y_{23}}=-\frac{\alpha_2^3}{\alpha_3^3},& &\frac{x_{24}}{y_{24}}=-\frac{\alpha_2^3}{\alpha_4^3}, & &\frac{x_{34}}{y_{34}}=-\frac{\alpha_3^3}{\alpha_4^3},
\end{aligned}
 \end{align}
and
\begin{align}
\begin{aligned}
    &\frac{z_{12}}{y_{12}}=\frac{\alpha_1^2-\alpha_2^2}{\alpha_2^3}, & &\frac{z_{13}}{y_{13}}=\frac{\alpha_1^1-\alpha_3^2}{\alpha_3^3},  & &\frac{z_{14}}{y_{14}}=\frac{\alpha_1^1-\alpha_1^2-\alpha_4^2}{\alpha_4^3},\\
    &\frac{z_{23}}{y_{23}}=\frac{\alpha_2^1-\alpha_3^1}{\alpha_3^3}, & &\frac{z_{24}}{y_{24}}=\frac{\alpha_2^1-\alpha_2^2-\alpha_4^1}{\alpha_4^3}, & &\frac{z_{34}}{y_{34}}=\frac{\alpha_3^1+\alpha_4^2-\alpha_3^2-\alpha_4^1}{\alpha_4^3}.
\end{aligned}
\end{align}
Eliminating the $\alpha_i^j$ from these equations gives us the following constraints:
\begin{align}
\label{eq:condition1}
    x_{jk}x_{kl}y_{jl}+y_{jk}y_{kl}x_{jl}=0 \quad \forall j<k<l,
\end{align}
and
\begin{align}
\label{eq:condition2}
    \frac{x_{24}}{y_{24}}\frac{z_{12}}{y_{12}}-\frac{x_{34}}{y_{34}}\frac{z_{13}}{y_{13}}+\frac{x_{34}}{y_{34}}\frac{z_{23}}{y_{23}}-\frac{z_{14}}{y_{14}}+\frac{z_{24}}{y_{24}}-\frac{z_{34}}{y_{34}}=0.
\end{align}
As in the proof of \Cref{thm: 4tuple-condition}, the fundamental matrices are compatible if and only if \Cref{eq:condition1,eq:condition2} are satisfied. Let $k$ be the smallest index satisfying $k\neq i,j$, then we can write
\begin{align}
    \begin{aligned}
        &x_{ij}=e_i^kF^{ij}\textbf{x}_j,\\
        &y_{ij}=\textbf{x}_iF^{ij}e_j^k,\\
        &z_{ij}=\textbf{x}_iF^{ij}\textbf{x}_j.\\
    \end{aligned}
\end{align}
With the substitution $\textbf{x}_i=F^{ij}e_j^l$ in \Cref{eq:condition1}, we get:
\begin{align}
\begin{aligned}
    &y_{jl}x_{jk}x_{kl}+x_{jl}y_{jk}y_{kl}\\
    =&(\textbf{x}_jF^{jl}e_l^i)(e_j^iF^{jk}\textbf{x}_k)(e_k^iF^{kl}\textbf{x}_l)+(e_j^iF^{jl}\textbf{x}_l)(\textbf{x}_jF^{jk}e_k^i)(\textbf{x}_kF^{kl}e_l^i)\\
    =&(e_k^iF^{kj}F^{jl}e_l^i)(e_j^iF^{jk}F^{kl}e_l^i)(e_j^iF^{jl}F^{lk}e_k^i)+(e_j^iF^{jl}F^{lj}e_j^i)(e_k^iF^{kj}F^{jk}e_k^i)(e_l^iF^{lk}F^{kl}e_l^i),\\
    =&\langle F^{jk}e_k^i,F^{jl}e_l^i \rangle \langle F^{kj}e_j^i,F^{kl}e_l^i \rangle\langle F^{lj}e_j^i,F^{lk}e_k^i \rangle+\|F^{lj}e_j^i\|^2\|F^{jk}e_k^i\|^2\|F^{kl}e_l^i\|^2 =0,
    \end{aligned}
\end{align}
hence we arrive at \Cref{eq: ijk F}. In \Cref{eq:condition2}, we use \Cref{eq:condition1} to substitute 
\begin{align}
  -\frac{1}{y_{14}}=\frac{x_{13}x_{34}}{x_{14}y_{13}y_{34}}  
\end{align}
and then plug in $\textbf{x}_i=F^{ij}e_j^l$ (we do this step to get a homogeneous equation in every fundamental matrix and epipole). This gives us \Cref{eq:long_equation}.
\end{proof}

\begin{remark} In the complex setting, we cannot always put $\textbf{x}_i=F^{ij}e_j^l$ in \Cref{thm: Case 2}, because there is no longer any guarantee that this makes $H_i$ invertible. For fixed complex $F^{ij}$, one can check if they are compatible in Case 2 instead by choosing any $\textbf{x}_i$ that make $H_i$ invertible. The same principle applies in Case 3. 
\end{remark}

\begin{theorem}[Case 3]\label{thm: Case 3} Let $\Set{F^{ij}}$ be a sextuple of fundamental matrices such that 
\begin{align}
    \begin{aligned}
        e_1^2=e_1^3\neq e_1^4,\,  e_2^1=e_2^3\neq e_2^4,\, e_3^1=e_3^2\neq e_3^4, 
    \end{aligned}
\end{align}
and $e_4^1,e_4^2,e_4^3$ are distinct and lie on a line. Then $\Set{F^{ij}}$ is compatible if and only if each triple is compatible.
\end{theorem}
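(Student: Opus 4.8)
The plan is to prove the non-trivial (``if'') direction by explicitly gluing together solutions of three of the four triples; as in the previous cases, necessity is immediate, since any solution $P_1,\dots,P_4$ of $\{F^{ij}\}$ restricts to a solution of every triple. For sufficiency, I would first use the compatible triple $\{F^{12},F^{14},F^{24}\}$ to obtain a solution $(P_1,P_2,P_4)$, and then the compatible triple $\{F^{13},F^{14},F^{34}\}$ to obtain a solution which, after applying \Cref{prop:properties_of_the_fundamental_matrix} 2. to the pair $\{1,4\}$ (and translating the whole triple by the corresponding element of $\mathrm{PGL}_4$), may be assumed to consist of $P_1$, a camera $B_3$ with $\psi(P_1,B_3)=F^{13}$ and $\psi(B_3,P_4)=F^{34}$, and $P_4$. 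At this point five of the six fundamental matrices are realized by $(P_1,P_2,B_3,P_4)$, so it remains only to show $\psi(P_2,B_3)\propto F^{23}$; note that the triple $\{F^{23},F^{24},F^{34}\}$ will then not even be needed.

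Next I would show that $P_1,P_2,B_3$ is a \emph{collinear} triple of cameras with pairwise distinct centers. Let $L$ be the line through $\ker P_1$ and $\ker P_2$. From $\psi(P_1,B_3)=F^{13}$ the center $b_3$ of $B_3$ satisfies $P_1(b_3)=e_1^3$, and by hypothesis $e_1^3=e_1^2=P_1(\ker P_2)$, so $b_3\in L$; from $\psi(B_3,P_4)=F^{34}$ we get $P_4(b_3)=e_4^3$, and since $e_4^3$ is distinct from $e_4^1=P_4(\ker P_1)$ and $e_4^2=P_4(\ker P_2)$, the point $b_3$ is distinct from $\ker P_1$ and $\ker P_2$. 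Hence $P_1,P_2,B_3$ have collinear, pairwise distinct centers. Applying the forward direction of \Cref{prop: K3 colin} to this triple of cameras, their fundamental matrices satisfy $\psi(P_2,B_3)\propto(\psi(P_1,P_2))^{T}[\,\ker(\psi(P_1,P_2))^{T}\,]_{\times}\,\psi(P_1,B_3)$; since $\psi(P_1,P_2)\propto F^{12}$ (so $\ker(\psi(P_1,P_2))^T\propto e_1^2$) and $\psi(P_1,B_3)\propto F^{13}$, this reads $\psi(P_2,B_3)\propto(F^{12})^{T}[e_1^2]_{\times}F^{13}$. On the other hand, the triple $\{F^{12},F^{13},F^{23}\}$ is compatible and has the collinear epipole configuration from \Cref{eq_collinear}, so \Cref{prop: K3 colin} gives $(F^{12})^{T}[e_1^2]_{\times}F^{13}\propto F^{23}$. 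Combining the two, $\psi(P_2,B_3)\propto F^{23}$, so $(P_1,P_2,B_3,P_4)$ is a solution to all six fundamental matrices and $\{F^{ij}\}$ is compatible.

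The main obstacle is the step that invokes \Cref{prop: K3 colin} as an \emph{identity} satisfied by any collinear triple of cameras, rather than as a compatibility criterion for given matrices: one must be careful that the vector inside the cross-product bracket is exactly the left kernel $e_1^2$ of the \emph{given} $F^{12}$ (which is why it matters that the matched cameras satisfy $\psi(P_1,P_2)\propto F^{12}$ precisely), and that \Cref{prop: K3 colin} genuinely applies, i.e.\ that $P_1,P_2,B_3$ are in the generic collinear situation underlying its proof — hence the role of the argument that $b_3\notin\{\ker P_1,\ker P_2\}$, which rests on the distinctness of the three epipoles in the fourth image. A secondary routine point is to check that the two auxiliary triples $\{1,2,4\}$ and $\{1,3,4\}$ are non-degenerate (distinct centers, well-defined epipoles), which is forced by the Case~3 epipole configuration; this also guarantees the matchings via \Cref{prop:properties_of_the_fundamental_matrix} 2. are available. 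If one prefers to avoid appealing to \Cref{prop: K3 colin} in identity form, the alternative is the computational route of \Cref{thm: 4tuple-condition} and \Cref{thm: Case 2}: normalize $\{F^{ij}\}$ by a fundamental action so that all epipoles sit at standard coordinate points, use the triple-wise conditions to pin down the shape of each $G^{ij}$, posit the corresponding cameras with three collinear centers and one off the line, and verify in \texttt{Macaulay2} that eliminating the camera parameters from the resulting system leaves no new equation beyond the collinear-triple relation already imposed.
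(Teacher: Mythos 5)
Your argument is correct, and it takes a genuinely different route from the paper. The paper's proof of \Cref{thm: Case 3} follows the same computational template as Cases 1 and 2: normalize the epipoles by a fundamental action, pin down the shape of each $G^{ij}$, posit cameras with centers $[1,0,0,0],[0,1,0,0],[1,1,0,0],[0,0,1,0]$, and eliminate the camera parameters to obtain two polynomial conditions, which are then recognized (via the $\star$-formula from the proof of \Cref{prop: K3 colin}) as exactly the compatibility conditions for the collinear triple $\{G^{12},G^{13},G^{23}\}$. You instead glue: realize $\{F^{12},F^{14},F^{24}\}$ and $\{F^{13},F^{14},F^{34}\}$, match them along the pair $(1,4)$ using \Cref{prop:properties_of_the_fundamental_matrix} 2.\ (both auxiliary triples are non-degenerate precisely because $e_i^4$ is distinct from the other epipoles in images $1,2,3$ and the three epipoles in image $4$ are distinct), and then use the forward direction of \Cref{prop: K3 colin} as an identity on the collinear camera triple $(P_1,P_2,B_3)$ together with the same identity for the given compatible triple $\{F^{12},F^{13},F^{23}\}$ to force $\psi(P_2,B_3)\propto (F^{12})^T[e_1^2]_\times F^{13}\propto F^{23}$ (both sides being nonzero since $F^{23}$ has rank $2$). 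This is the same strategy the paper uses for \Cref{thm:compatible_if_each_sextuple_is_compatible}, adapted to the point where \Cref{lem:triple_has_unique_solution} fails; the collinearity of $\ker P_1,\ker P_2,\ker B_3$ and the distinctness of the three centers are exactly the facts needed to substitute \Cref{prop: K3 colin} for the uniqueness lemma, and you verify both correctly from the epipole hypotheses. Your approach is coordinate-free, avoids the \texttt{Macaulay2} elimination, and proves slightly more (compatibility of the triple $\{F^{23},F^{24},F^{34}\}$ is never used); the paper's computation, on the other hand, produces the explicit polynomial conditions \Cref{eq:condition3case3} as a byproduct and keeps the treatment of the four cases uniform.
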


\begin{proof} Like in the two previous proofs, we begin by assuming the triple-wise conditions are satisfied, since we know them to be necessary. Fix a scaling such that $e_4^3=e_4^1+e_4^2$. Let 
\begin{align}
    H_i=\begin{bmatrix}
    e_i^j\,\, e_i^l\,\, \textbf{x}_i
    \end{bmatrix}, \quad H_4=\begin{bmatrix}
    e_4^1\,\, e_4^2\,\, \textbf{x}_4
    \end{bmatrix}
\end{align}
for $i=1,2,3$ and $j<k<l$, and
\begin{align}  G^{ij}=H_i^TF^{ij}H_j.
\end{align}
Let $\textbf{x}_i=F^{ij}e_j^l$ with $j<k<l$ for $i=1,2,3$. Since all epipolar numbers are zero in this case, $(e_i^j)^T\textbf{x}_i$ and $(e_i^l)^T\textbf{x}_i$ are both zero. It follows that $H_i$ is full-rank for $i=1,2,3$. Let $\textbf{x}_4$ be such that $H_4$ is full-rank. The fundamental matrices $G^{ij}$ are compatible if and only if $F^{ij}$ are. Note that the epipoles of $G^{ij}$ are:
\begin{align*}
    &h_1^2=[1,0,0], & &\underline{h}_2^1=[1,0,0], & &h_3^1=[1,0,0], & &\underline{h}_4^1=[1,0,0],\\
    &h_1^3=[1,0,0], & &\underline{h}_2^3=[1,0,0], & &h_3^2=[1,0,0], & &h_4^2=[0,1,0],\\
    &\underline{h}_1^4=[0,1,0], & &\underline{h}_2^4=[0,1,0], & &\underline{h}_3^4=[0,1,0], & &\underline{h}_4^3=[1,1,0].
\end{align*}
With these epipoles and the fact that the $G^{ij}$ satisfy the triple-wise conditions (preserved under fundamental action), it follows that the six matrices must be on the form:
\begin{align}\label{eq: Gform c3}
\begin{aligned}
    &G^{12}=\left[\begin{matrix} 0&0&0\\0&0&x_{12}\\0&y_{12}&z_{12} \end{matrix}\right], & &G^{13}=\left[\begin{matrix} 0&0&0\\0&0&x_{13}\\0&y_{13}&z_{13} \end{matrix}\right],    & &G^{14}=\left[\begin{matrix} 0&0&x_{14}\\0&0&0\\0&y_{14}&z_{14} \end{matrix}\right],\\
    &G^{23}=\left[\begin{matrix} 0&0&0\\0&0&x_{23}\\0&y_{23}&z_{23} \end{matrix}\right], & &G^{24}=\left[\begin{matrix} 0&0&x_{24}\\0&0&0\\y_{24}&0&z_{24} \end{matrix}\right],    & &G^{34}=\left[\begin{matrix} 0&0&x_{34}\\0&0&0\\y_{34}&-y_{34}&z_{34} \end{matrix}\right]. 
\end{aligned}
\end{align}
The sextuple $\Set{G^{ij}}$ is compatible if and only if there exists a reconstruction consisting of 4 cameras $P_i$ with the centers of $P_1,P_2,P_3$ lying on a line that does not contain the center of $P_4$. To see this, note that the three epipoles in each image are collinear, implying that any reconstruction must consist of cameras with coplanar centers. Furthermore, since two epipoles coincide in the first three images, the centers of $P_1,P_2,P_3$ must lie on a line. We are free to choose coordinates in $\p3$ without changing the fundamental matrices, so we take the four camera centers (assuming they exist) to be $[1,0,0,0]$, $[0,1,0,0]$, $[1,1,0,0]$, and $[0,0,1,0]$. We recall that the epipoles satisfy
\begin{align}
    h_i^j=P_i(\ker(P_j)). 
\end{align}
So if $\Set{G^{ij}}$ has a reconstruction $\Set{P_i}$, it must be on the form:
\begin{align}\begin{aligned}
    &P_1=\left[\begin{matrix} 0&1&0&\alpha_1^1\\0&0&\beta_1&\alpha_1^2\\0&0&0&\alpha_1^3 \end{matrix}\right],       
    & &P_2=\left[\begin{matrix} 1&0&0&\alpha_2^1\\0&0&\beta_2&\alpha_2^2\\0&0&0&\alpha_2^3 \end{matrix}\right],\\
    &P_3=\left[\begin{matrix} 1&-1&0&\alpha_3^1\\0&0&\beta_3&\alpha_3^2\\0&0&0&\alpha_3^3 \end{matrix}\right],       
    & &P_4=\left[\begin{matrix} 1&0&0&\alpha_4^1\\0&1&0&\alpha_4^2\\0&0&0&\alpha_4^3 \end{matrix}\right].  
\end{aligned}
\end{align}
where the $\beta_i,\alpha_i^j$ are scalars. Since the fundamental matrices are rank-2 and the cameras rank-3, the four scalars $\alpha_i^3$, as well as all the $\beta_i$, $x_{ij}$ and $y_{ij}$ are non-zero. Computing the fundamental matrices of these four cameras, and setting them equal to the $G^{ij}$, we get after elimination the following two equations: 
\begin{align}\begin{aligned}\label{eq:condition3case3}
x_{12}x_{23}y_{13}+x_{13}y_{12}y_{23}&=0,\\
\frac{x_{23}}{y_{23}}\frac{z_{12}}{y_{12}}+\frac{z_{13}}{y_{13}}-\frac{z_{23}}{y_{23}}&=0.
\end{aligned}
\end{align}
Similarly to the proofs of Cases 1 and 2, \Cref{eq:condition3case3} are equivalent to $G^{ij}$ being compatible. We next observe that these equations precisely describe that $G^{12},G^{13}$ and $G^{23}$ are compatible. Indeed, we have seen in the proof of \Cref{prop: K3 colin} that for compatibility we must have (up to scale) 
\begin{align}
\label{eq:collinear_case_3}
    G^{23}=\begin{bmatrix} 0&0&0\\0&0&-y_{12}x_{13}\\0&x_{12}y_{13}&x_{12}z_{13}-x_{13}z_{12} \end{bmatrix}.
\end{align}
This is equivalent to
\begin{align}
    \mathrm{rank}\begin{bmatrix}
       -y_{12}x_{13} & x_{23}\\
       x_{12}z_{13}-x_{13}z_{12} & z_{23}\\
       x_{12}y_{13} & y_{23}
    \end{bmatrix}=1
\end{align} 
Setting the $2\times 2$ minors of this $3\times 2$ matrix to zero, we get a polynomial system equivalent to \Cref{eq:condition3case3}, finishing the proof. 
\end{proof}

\begin{theorem}[Case 4]\label{thm: Case 4} Let $\Set{F^{ij}}$ be a sextuple of fundamental matrices such that 
in each image, all three epipoles coincide. Then $\Set{F^{ij}}$ is compatible if and only if each triple is compatible.
\end{theorem}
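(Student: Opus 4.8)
The plan is to reduce compatibility of the full sextuple to a statement about $2\times 2$ matrices, building on the collinear triple result \Cref{prop: K3 colin}. The direction ``compatible $\Rightarrow$ each triple compatible'' is immediate, since a global solution restricts to a solution of every triple; so assume each triple is compatible, and in particular that the triple-wise conditions hold. As in the proofs of Cases~1--3, I would first normalize using the fundamental action: take $H_i=[\,e_i^j\ \textbf{x}_i\ \textbf{y}_i\,]$ with $e_i^j$ the common epipole in image $i$ and $\textbf{x}_i,\textbf{y}_i$ chosen so that $H_i$ is invertible, and replace $F^{ij}$ by $G^{ij}=H_i^TF^{ij}H_j$; this preserves compatibility by \Cref{prop:fundamental_action_preserves_compibility}. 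Now all epipoles equal $[1,0,0]$, so each $G^{ij}$ has vanishing first row and column, with an invertible $2\times 2$ lower-right block $M^{ij}$ satisfying $M^{ji}=(M^{ij})^T$ --- the shape of \Cref{eq: Gform}.

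Write $\omega=\begin{bmatrix}0&-1\\1&0\end{bmatrix}$, the lower-right block of $[(1,0,0)]_\times$, and write $\propto$ for equality up to a nonzero scalar. Since in Case~4 the three epipoles in each image coincide, the epipole hypothesis of \Cref{prop: K3 colin} is automatic, and that proposition translates, in these coordinates, to: the triple $\{G^{1j},G^{1k},G^{jk}\}$ (for $1<j<k$) is compatible iff $(M^{1j})^T\omega M^{1k}\propto M^{jk}$. To handle the full sextuple I would choose coordinates in $\PP^3$ so that the four (necessarily collinear) camera centers lie on the line spanned by the first two coordinate points, and look for a solution among cameras of the shape used in the proof of \Cref{prop: K3 colin}, namely $P_i=\begin{bmatrix} u_i & v_i & 0 & 0\\ 0 & 0 & * & *\\ 0 & 0 & * & *\end{bmatrix}$ with $(u_i,v_i)$ pairwise non-proportional and the $2\times 2$ block $\tilde B_i$ of columns $3,4$ invertible. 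Setting $N_i=\tilde B_i^{-1}$, the skew-symmetry criterion (part~5 of \Cref{prop:properties_of_the_fundamental_matrix}) applied to the $3\times 3$ matrix with vanishing first row and column and lower block $N_i^T\omega N_j$ shows that this matrix is $\psi(P_i,P_j)$; the centers being distinct, $\psi(P_i,P_j)$ is a genuine rank-$2$ fundamental matrix of exactly the normalized shape. (This is the same computation already carried out for $\psi(P_1,P_2)$ and $\psi(P_2,P_3)$ in the proof of \Cref{prop: K3 colin}.) Hence $\{G^{ij}\}$ is compatible as soon as there exist $N_1,\dots,N_4\in\mathrm{GL}_2$ with $M^{ij}\propto N_i^T\omega N_j$ for all $i<j$.

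It remains to build the $N_i$. Put $N_1=I$ and $N_j=\omega^{-1}M^{1j}$ for $j=2,3,4$; these are invertible since the $M^{1j}$ are. Then $N_1^T\omega N_j=M^{1j}$, and for $1<j<k$, using $(\omega^{-1})^T\omega\,\omega^{-1}=\omega$,
\[
N_j^T\omega N_k=(M^{1j})^T(\omega^{-1})^T\omega\,\omega^{-1}M^{1k}=(M^{1j})^T\omega M^{1k}\propto M^{jk},
\]
the last step being compatibility of the triple $\{G^{1j},G^{1k},G^{jk}\}$. So all six relations $M^{ij}\propto N_i^T\omega N_j$ hold, and therefore $\{G^{ij}\}$ --- hence $\{F^{ij}\}$ --- is compatible. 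Note that only the three triples through index~$1$ are used; compatibility of $\{F^{23},F^{24},F^{34}\}$ then follows automatically.

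The one genuinely new ingredient is the $2\times 2$ reduction in the second paragraph: checking that the block-shaped cameras produce exactly the normalized $G^{ij}$ shape with $2\times 2$ block $N_i^T\omega N_j$, with the correct ranks and with distinct centers. This is routine --- essentially the computation already carried out inside the proof of \Cref{prop: K3 colin} --- but it is the step that needs care; everything after it is formal linear algebra. As in Case~3, the argument makes no use of positivity of the reals, so it is valid verbatim over $\CC$.
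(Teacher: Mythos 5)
Your proof is correct and follows essentially the same route as the paper, which derives Case 4 from the collinear part of \Cref{thm:compatible_if_each_sextuple_is_compatible}: normalize by the fundamental action so all epipoles become $[1,0,0]$, apply \Cref{prop: K3 colin} to the triples through index $1$, and exhibit explicit block-shaped cameras with collinear centers. Your $2\times 2$ factorization $M^{ij}\propto N_i^T\omega N_j$ with $N_j=\omega^{-1}M^{1j}$ is just a cleaner packaging of the paper's explicit cameras $P_i$, whose lower $2\times 2$ blocks are built from the entries of $G^{1i}$ in exactly this way.
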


This result is a direct consequence of \Cref{thm:compatible_if_each_sextuple_is_compatible}, proven in the next subsection.


\subsection{$K_n$}\label{ss: Kn}

For the case of more than 4 cameras, it turns out that quadruple-wise compatibility is sufficient to ensure global compatibility.

\begin{theorem}
\label{thm:compatible_if_each_sextuple_is_compatible}
Let $\Set{F^{ij}}$ be a complete set of $n\choose2$, $n\ge 4$, fundamental matrices such that 
for all $i,j,k,l$, the sextuple $F^{ij},F^{ik},F^{jk},F^{il},F^{jl},F^{kl}$ is compatible. Then $\Set{F^{ij}}$ is compatible.

Moreover, if all epipoles in each image coincide, then triple-wise compatibility implies that $\Set{F^{ij}}$ is compatible. The reconstruction in this case will be a set of cameras whose centers all lie on a line.
\end{theorem}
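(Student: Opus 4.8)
My plan is to reduce to a dichotomy: either all epipoles in each image coincide, or some triple of camera centers is forced to be non‑collinear. First I would argue this is exhaustive. If image $i$ has $e_i^j\neq e_i^k$ (these are intrinsic, being $\ker F^{ji},\ker F^{ki}\in\PP^2$), then the triple $\{F^{ij},F^{ik},F^{jk}\}$ — compatible, since it sits inside a compatible sextuple — has $P_i(\ker P_j)\neq P_i(\ker P_k)$, so $c_i,c_j,c_k$ are non‑collinear, and by \Cref{lem:triple_has_unique_solution} this triple is \emph{rigid} (unique solution up to $\mathrm{PGL}_4$). If instead all epipoles in each image coincide, the ``moreover'' statement (proved below) already gives global compatibility, since quadruple‑wise compatibility implies triple‑wise compatibility. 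So it remains to handle the branch where some rigid triple exists.

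In that branch, relabel so $c_1,c_2,c_3$ are non‑collinear and fix a solution $P_1,P_2,P_3$ of the rigid triple $\{F^{12},F^{13},F^{23}\}$. For each $k\ge4$, the sextuple on $\{1,2,3,k\}$ is compatible; by rigidity I align its solution via a $\mathrm{PGL}_4$ element so that its first three cameras become $P_1,P_2,P_3$, and I set $P_k$ to be the fourth camera. Then $\psi(P_{i'},P_k)=F^{i'k}$ for all $i'\in\{1,2,3\}$ and all $k$, so it suffices to prove $\psi(P_i,P_j)=F^{ij}$ for $4\le i<j\le n$. Given such $i,j$, I pick a pair $\{a,b\}\subseteq\{1,2,3\}$ with $c_a,c_b,c_i$ non‑collinear and $c_a,c_b,c_j$ non‑collinear; this is possible because at most one of the three pairs can be collinear with $c_i$ (two such pairs would force either $c_i\in\{c_1,c_2,c_3\}$ or $c_1,c_2,c_3$ collinear, both excluded), and likewise for $c_j$. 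Using compatibility of the sextuple on $\{a,b,i,j\}$ and rigidity of $\{a,b,i\}$ (realized both by its sextuple‑solution and by $(P_a,P_b,P_i)$), I align them by a $\mathrm{PGL}_4$ element and transport the fourth camera to $\hat P_j$ with $\psi(P_a,\hat P_j)=F^{aj}$, $\psi(P_b,\hat P_j)=F^{bj}$, $\psi(P_i,\hat P_j)=F^{ij}$. Now $(P_a,P_b,\hat P_j)$ and $(P_a,P_b,P_j)$ both solve the rigid triple $\{a,b,j\}$, hence differ by a $\mathrm{PGL}_4$ element fixing two cameras with distinct centers; such an element is $I+c_av^T$ and also $I+c_bw^T$, forcing it to be the identity. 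Thus $P_j=\hat P_j$ and $\psi(P_i,P_j)=F^{ij}$.

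For the ``moreover'' part I would induct on $n$, the base case $n=3$ being immediate (a compatible triple with coinciding epipoles has, by the classification, a collinear solution — cf.\ \Cref{prop: K3 colin}). Given a collinear solution $P_1,\dots,P_{n-1}$ on a line $L$ for the sub‑collection on $\{1,\dots,n-1\}$ (whose triples are compatible and whose epipoles coincide), I normalize by the fundamental action so every epipole is $[1,0,0]$; then each matrix becomes $\left[\begin{smallmatrix}0&0&0\\0&M_{ij}\end{smallmatrix}\right]$ with invertible $2\times2$ core $M_{ij}$, $M_{ji}=M_{ij}^{T}$, and \Cref{prop: K3 colin} applied to each triple gives $M_{ik}=(\text{scalar})\,M_{ij}^{T}J'M_{jk}$ with $J'=\left[\begin{smallmatrix}0&-1\\1&0\end{smallmatrix}\right]$. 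Writing the $n$‑th camera (if it exists) in normal form with center on $L$ forces its core to be determined by $M_{1n}$, leaving only the position of its center along $L$ free; computing $\psi(P_i,P_n)$ shows it equals that position‑difference times a fixed matrix which, by the $K_3$ relation for $\{1,i,n\}$, is proportional to the core of $F^{in}$, and I choose the position so all these scalars match at once — consistency across $i$ following from $P_1,\dots,P_{n-1}$ being a genuine solution. The resulting reconstruction is collinear by construction.

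The main obstacle, I expect, is precisely this last bookkeeping in the collinear case: unlike the non‑collinear branch, a collinear triple is \emph{not} rigid (\Cref{lem:triple_has_unique_solution} fails), so no triple pins down the new camera, and one must show the single remaining degree of freedom — its center's position on $L$ — can be chosen to satisfy all $n-1$ matching conditions simultaneously. Everything else is repeated use of rigidity of non‑collinear triples together with the elementary fact that a projective transformation fixing two cameras with distinct centers is the identity.
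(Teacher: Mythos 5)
Your proposal is correct and follows essentially the same route as the paper: the non-collinear branch is the paper's argument of gluing solutions of overlapping rigid (non-collinear) triples via \Cref{lem:triple_has_unique_solution} (carried out for general $n$ with a careful choice of anchor pair, where the paper treats $n=5$ and notes the extension), and the collinear branch rests on the same normalized form and the $K_3$ relation of \Cref{prop: K3 colin}. One remark: the ``main obstacle'' you anticipate in the collinear case is not actually there --- since $\psi(P_i,P_n)$ lives in $\PP^{3\times 3}$, proportionality to $F^{in}$ suffices, and the position of the new center on $L$ enters each $\psi(P_i,P_n)$ only as a global nonzero scalar; this is exactly why the paper's explicit reconstruction works for arbitrary distinct nonzero $\gamma_i$, and why \Cref{prop:unique_solution} records the non-uniqueness of the collinear solution.
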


In the non-collinear case, we actually don't need to assume that all sextuples are compatible. It suffices that there is a sextuple $F^{12},F^{13},F^{14},F^{23},F^{24},F^{34}$ that is compatible with a solution of cameras $P_1,P_2,P_3,P_4$ such that the line spanned by the centers of $P_1,P_2$ do not contain the centers of $P_3,P_4$, and that each sextuple of fundamental matrices corresponding to indices $\{1,2,3,i\}$ and $\{1,2,4,i\}$ for $i\ge 5$ are compatible. This is what we show in the proof below. 



\begin{proof}
   We start with the collinear case. As in the proof of \Cref{prop: K3 colin}, it suffices to prove the statement for fundamental matrices
    \begin{align}
        G^{ij}=\begin{bmatrix}
        0&0&0\\
        0&a_{ij}&b_{ij}\\
        0&c_{ij}&d_{ij}
    \end{bmatrix}.
    \end{align}
    By the compatibility of $\Set{G^{1i},G^{1j},G^{ij}}$, we have by \Cref{prop: K3 colin} that
    \begin{align}
        G^{ij}=\begin{bmatrix}
        0&0&0\\
        0&c_{1i}a_{1j}-a_{1i}c_{1j}&c_{1i}b_{1j}-a_{1i}d_{1j}\\
        0&d_{1i}a_{1j}-b_{1i}c_{1j}&d_{1i}b_{1j}-b_{1i}d_{1j}
    \end{bmatrix},
    \end{align}
    for all $i,j\neq1$. It can be verified that the following cameras $P_i$ form a reconstruction of these fundamental matrices:
    \begin{align}\begin{aligned}
        P_1&=\begin{bmatrix}
        0&1&0&0\\
        0&0&1&0\\
        0&0&0&1
    \end{bmatrix}, &
    & P_i=\begin{bmatrix} 
        \gamma_i&1&0&0\\
        0&0&b_{1i}&d_{1i}\\
        0&0&-a_{1i}&-c_{1i}
    \end{bmatrix}, \forall i\neq1, 
    \end{aligned}
    \end{align}
    where $\gamma_i\neq 0$ are distinct numbers. Hence the $n\choose2$-tuple is compatible whenever each triple is compatible. We also observe that all cameras have a center lying on the line $[\lambda_1,\lambda_2,0,0]$.

   Now assume that in some image, not all epipoles coincide. We prove the theorem for the case $n=5$ and note that the principle extends to any $n$. 
   
   Consider a sextuple $S_{1234}=\Set{F^{12},F^{13},F^{14},F^{23},F^{24},F^{34}}$, where in some image, not all epipoles coincides. Let $P_1,P_2,P_3,P_4$ be a solution and without loss of generality assume that the line spanned by the centers of $P_1,P_2$ do not contain the centers of $P_3,P_4$. Let $P_1',P_2',P_3',P_5$ be a solution to $S_{1235}=\Set{F^{12},F^{13},F^{15},F^{23},F^{25},F^{35}}$. By \Cref{lem:triple_has_unique_solution}, we have that $P_1,P_2,P_3$ and $P_1',P_2',P_3'$ differ by $\mathrm{PGL}_4$, and we may therefore take them to be equal. 
   
   It remains to prove that $F^{45}$ is the fundamental matrix of $P_4,P_5$. For this we note that either 1) $P_1,P_2,P_5$ or 2) $P_1,P_3,P_5$ are not collinear cameras, since $P_1,P_2,P_3$ are not collinear. In the first case 1), consider the tuple $S_{1245}=\Set{F^{12},F^{14},F^{15},F^{24},F^{25},F^{45}}$ with solution $P_1'',P_2'',P_4'',P_5''$. By \Cref{lem:triple_has_unique_solution}, the overlap between $S_{1235}$ and $S_{1245}$ imply that we can via $\mathrm{PGL}_4$ action assume $P_1''=P_1,P_2''=P_2,P_5''=P_5$, and the overlap between $S_{1234}$ and $S_{1245}$ imply that we can also assume $P_4''=P_4$, since $P_1,P_2,P_4$ are not collinear. But since $F^{45}$ is the fundamental matrix of $P_4'',P_5''$ we conclude that it is also the fundamental matrix of $P_4,P_5$. In the second case 2) the argument is analogous when we consider $S_{1345}$ instead of $S_{1245}$.  
\end{proof}


While uniqueness is not the focus of this paper, we give the following useful theorem on the complete graph:

\begin{proposition}
\label{prop:unique_solution}
    A compatible set of $n\choose2$ fundamental matrices has a unique solution up to action by $\PGL(4)$, unless all the epipoles in each image are equal.
\end{proposition}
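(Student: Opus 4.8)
The plan is to deduce the statement from the three-view uniqueness result \Cref{lem:triple_has_unique_solution} by a bootstrapping argument. Let $P_1,\dots,P_n$ and $P_1',\dots,P_n'$ be two solutions of the compatible set $\Set{F^{ij}}$. Since each $F^{ij}$ has rank $2$ and $\psi$ vanishes on any pair of cameras with a common center, all camera centers of either solution are distinct. The hypothesis that the epipoles do not all coincide in every image means there is an image $j$ and indices $a\neq b$, both distinct from $j$, with $e_j^a\neq e_j^b$; by \Cref{prop:properties_of_the_fundamental_matrix} this is equivalent to the centers of $P_a,P_b,P_j$ not being collinear. After relabeling assume $\Set{a,b,j}=\Set{1,2,3}$, so the centers $c_1,c_2,c_3$ of $P_1,P_2,P_3$ span a plane. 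Then the two epipoles in each of images $1,2,3$ that occur in the triple $\Set{F^{12},F^{13},F^{23}}$ are distinct, so by \Cref{lem:triple_has_unique_solution} this triple has a unique solution up to $\PGL_4$. Acting on the whole primed solution by one element of $\PGL_4$ (which leaves all $F^{ij}$ unchanged), we may therefore assume $P_1'=P_1$, $P_2'=P_2$, $P_3'=P_3$.

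Next I would fix $i\geq4$ and show $P_i'=P_i$. The center $c_i=\ker P_i$ is distinct from $c_1,c_2,c_3$, so it lies on at most one of the three lines $\overline{c_1c_2}$, $\overline{c_1c_3}$, $\overline{c_2c_3}$; pick $a,b\in\Set{1,2,3}$ so that $c_i$ is not on $\overline{c_ac_b}$, i.e. $c_a,c_b,c_i$ are not collinear. Then the triple $\Set{F^{ab},F^{ai},F^{bi}}$ has distinct epipoles in each of its three images, so by \Cref{lem:triple_has_unique_solution} it has a unique solution up to $\PGL_4$. Both $(P_a,P_b,P_i)$ and $(P_a,P_b,P_i')$ are solutions of this triple, so there is $H\in\PGL_4$ with $P_aH=P_a$, $P_bH=P_b$ and $P_i'=P_iH$, where these equalities hold in projective space, i.e. up to nonzero scalars.

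It then remains to check that such an $H$ must be the identity of $\PGL_4$. From $P_aH=\mu P_a$ with $\mu\neq0$ one sees that the projectivity $H$ of $\p3$ preserves every line through $c_a$: for $x\neq c_a$ we have $P_a(Hx)=\mu P_a(x)$, so $Hx$ lies in the fiber of the projection $P_a$ through $x$, which is the line $\overline{c_ax}$. Likewise $H$ preserves every line through $c_b$. Consequently, any point $p\in\p3$ off the line $\overline{c_ac_b}$ satisfies $Hp\in\overline{c_ap}\cap\overline{c_bp}=\Set{p}$, because $\overline{c_ap}$ and $\overline{c_bp}$ are then two distinct lines through $p$. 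So $H$ fixes a Zariski-dense subset of $\p3$, hence $H$ is the identity and $P_i'=P_i$. Running this for every $i\geq4$ shows the two solutions agree, which is the desired uniqueness up to $\PGL_4$. Finally, the exception is sharp: if all epipoles coincide in every image then the camera centers of any solution are collinear, and applying \Cref{lem:triple_has_unique_solution} to any triple (or exhibiting the one-parameter families of cameras constructed in the proof of \Cref{thm:compatible_if_each_sextuple_is_compatible}) shows the solution is not unique.

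I expect the main obstacle to be the last step — proving that a projective transformation of $\p3$ fixing all lines through two distinct points is the identity — together with the bookkeeping needed to guarantee, for each triple used, that the relevant epipoles are distinct so that \Cref{lem:triple_has_unique_solution} applies; the elementary but crucial observation making the bootstrap work is that $c_i$ can lie on at most one of the three lines $\overline{c_1c_2},\overline{c_1c_3},\overline{c_2c_3}$.
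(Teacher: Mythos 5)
Your proof is correct, and it follows the same overall strategy as the paper (bootstrapping from the three-view uniqueness of \Cref{lem:triple_has_unique_solution}), but the combinatorial engine is genuinely different. The paper invokes the Sylvester--Gallai theorem to produce a single ``ordinary'' pair $P_1,P_2$ whose line contains no other center, so that every triple $(P_1,P_2,P_i)$ is non-collinear and the same base pair works for all $i$; it then pins down coordinates by fixing $P_1,P_2$ and leaves implicit the fact that the stabilizer of a pair of cameras with distinct centers in $\PGL_4$ is trivial. You instead fix one non-collinear base triple and, for each $i\ge4$, choose the pair $(a,b)$ adaptively using the elementary observation that $c_i$ can lie on at most one of the three lines $\overline{c_1c_2},\overline{c_1c_3},\overline{c_2c_3}$; you then prove the rigidity step explicitly (a projectivity preserving all lines through two distinct points fixes the complement of the line joining them, hence is the identity). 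Your route buys two things: it replaces Sylvester--Gallai — a theorem specific to the reals, which would need replacing if one wanted the complex version the paper alludes to in \Cref{s: Pre} — by a trivial incidence argument valid over any field, and it makes the ``$H$ must be the identity'' step, which the paper's phrase ``which fixes coordinates in $\p3$'' glosses over, into an explicit lemma. The only soft spot is your parenthetical for the exceptional case: applying \Cref{lem:triple_has_unique_solution} to a single collinear triple does not by itself show non-uniqueness of the full $n$-camera solution (the extra triple solutions need not extend), so for $n>3$ one really does need the explicit one-parameter families from the proof of \Cref{thm:compatible_if_each_sextuple_is_compatible}, which is the alternative you also cite and is exactly what the paper does.
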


\begin{proof} If the set of fundamental matrices is compatible, and the epipoles in each image are not all equal, we know that there exists a reconstruction consisting of $n$ cameras, not all lying on a line. It follows from the Sylvester-Gallai theorem \cite[Chapter 11]{aigner1999proofs} that there will always be at least two cameras $P_1,P_2$ such that the line spanned by their camera centers does not contain any other camera centers. By \Cref{lem:triple_has_unique_solution}, a triple of compatible fundamental matrices has a unique solution if the two epipoles in each image are distinct, or equivalently if their reconstruction consists of three non-collinear cameras. Up to projective transformation, we can uniquely recover $P_1,P_2$ from $F^{12}$, which fixes coordinates in $\p3$. All other cameras $P_i$ are then uniquely determined by the triple $F^{12},F^{1i},F^{2i}$. Since this uniquely determines all cameras (up to global projective transformation), the fundamental matrices $F^{ij}$ can only have one solution.

Conversely, in the case that all epipoles in each image are equal, the constructed solution of cameras $P_i$ in the proof of \Cref{thm:compatible_if_each_sextuple_is_compatible} shows that there is no unique reconstruction of the centers (up to global projective transformation). This is because the choice of distinct numbers $\gamma_i$, was arbitrary. 
\end{proof}

\subsection{$n$-view matrices}\label{ss: n-view}

The compatibility of $n\choose 2$ fundamental matrices $F^{ij}$ was also studied in \cite{kasten2019gpsfm,geifman2020averaging} and we recall their results below. Given a set of $n\choose2$ fundamental matrices $F^{ij}$, the \emph{$n$-view fundamental matrix} is the $3n\times 3n$ symmetric matrix
\begin{align}
    \textbf{F}:=\begin{bmatrix} 0 & F^{12} & \cdots & F^{1n}\\
    F^{21} & 0 & \cdots & F^{2n}\\
    \vdots & \vdots & \ddots & \vdots\\
    F^{n1} & F^{n2} & \cdots & 0\\
    \end{bmatrix}.
\end{align}

\begin{theorem}[Theorem 1 of \cite{kasten2019gpsfm}, Theorem 2 of \cite{geifman2020averaging}]\label{thm: KGGB} Let $\Set{F^{ij}}$ be a complete set of $n\choose 2$ real fundamental matrices, where $n\ge 3$. Then $\Set{F^{ij}}$ is compatible with a solution of real cameras whose centers are not all collinear if and only if there exist non-zero scalars $\lambda_{ij}=\lambda_{ji}$ such that:
\begin{enumerate}
    \item the $n$-view fundamental matrix $~\textnormal{\textbf{F}}=(\lambda_{ij}F^{ij})_{ij}$ is rank-6 and has exactly three positive and three negative eigenvalues;
    \item the $3\times 3n$ and $3n\times 3$ block rows and block columns of \textnormal{$\textbf{F}$} are all of rank 3.
\end{enumerate}

Further, $\Set{F^{ij}}$ is compatible with a solution of real cameras whose centers are all collinear if and only if there exist non-zero scalars $\lambda_{ij}=\lambda_{ji}$ such that:
\begin{enumerate}
    \item the $n$-view fundamental matrix $~\textnormal{\textbf{F}}=(\lambda_{ij}F^{ij})_{ij}$ is rank-4 and has exactly two positive and two negative eigenvalues;
    \item the $3\times 3n$ and $3n\times 3$ block rows and block columns of \textnormal{$\textbf{F}$} are all of rank 2.
\end{enumerate}
\end{theorem}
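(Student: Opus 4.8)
The plan is to recognise the $n$-view fundamental matrix as a congruence of the classical Klein form and to read the rank, eigenvalue and block conditions off that factorisation. The key algebraic fact is that each entry of $F^{ij}=\psi(P_i,P_j)$ is, up to sign, a $4\times 4$ minor of the $6\times 4$ matrix $\bigl[\begin{smallmatrix}P_i\\ P_j\end{smallmatrix}\bigr]$, obtained by deleting one row from $P_i$ and one from $P_j$; expanding such a minor by Laplace along the three $P_i$-rows shows that $F^{ij}=(\wedge^2 P_i)\,\Omega\,(\wedge^2 P_j)^T$, where $\wedge^2 P_i$ denotes the $3\times 6$ matrix of $2\times 2$ minors of $P_i$ and $\Omega$ is the fixed $6\times 6$ symmetric matrix of the wedge pairing $\wedge^2\RR^4\times\wedge^2\RR^4\to\wedge^4\RR^4\cong\RR$. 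Over $\RR$ this pairing is a sum of three hyperbolic planes, so $\Omega$ is nondegenerate of signature $(3,3)$. Hence for any cameras $P_1,\dots,P_n$ there are unique scalars $\lambda_{ij}$ identifying $\lambda_{ij}F^{ij}$ with $(\wedge^2 P_i)\,\Omega\,(\wedge^2 P_j)^T$, and for these scalars $\mathbf F=\mathcal P\,\Omega\,\mathcal P^T$, where $\mathcal P$ is the $3n\times 6$ matrix with block rows $\wedge^2 P_i$.

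For the ``only if'' direction I would observe that the row space $W_i$ of $\wedge^2 P_i$ is the $3$-dimensional $\alpha$-plane on the Klein quadric cut out by the center $c_i$, i.e.\ the span of the bivectors of all lines through $c_i$. If the centers are not all collinear, pick three non-collinear ones $c_i,c_j,c_k$: the planes $W_i,W_j$ both lie in, and together span, the $5$-dimensional hyperplane $H_\ell$ of bivectors of lines meeting $\ell=c_ic_j$, while $W_k\not\subseteq H_\ell$ since $c_k\notin\ell$; thus $W_i+W_j+W_k=\RR^6$ and $\mathcal P$ has rank $6$, so $\mathbf F=\mathcal P\,\Omega\,\mathcal P^T$ is congruent to $\Omega$ and has rank $6$ with signature $(3,3)$. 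The $i$-th $3\times 3n$ block row equals $(\wedge^2 P_i)\,\Omega\,\mathcal P^T$; since $\wedge^2 P_i$ and $\Omega$ have full rank and the rows of $\mathcal P$ span $\RR^6$, it has rank $3$, and the block columns follow by symmetry. If all centers lie on a line $\ell$, then every $W_i\subseteq H_\ell$, and since the bivector of $\ell$ lies in $H_\ell$ and is $\Omega$-isotropic, $\Omega|_{H_\ell}$ has rank $4$ and signature $(2,2)$; this gives the collinear statement, the block rows and columns now being of rank $2$.

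The ``if'' direction is the main obstacle, and I would proceed as follows. Given scalars $\lambda_{ij}$ with $\mathbf F=(\lambda_{ij}F^{ij})$ of rank $6$ and signature $(3,3)$, congruence writes $\mathbf F=\mathcal P\,\Omega\,\mathcal P^T$ for a $3n\times 6$ matrix $\mathcal P$ of rank $6$ (using that $\Omega$ is the split form of its signature). Let $\mathcal P_i$ be the $i$-th $3\times 6$ block and $W_i=\mathrm{rowspan}(\mathcal P_i)$. Vanishing of the diagonal blocks gives $\mathcal P_i\,\Omega\,\mathcal P_i^T=0$, so $W_i$ is totally $\Omega$-isotropic, and the block-row rank condition forces $\dim W_i=3$; hence each $W_i$ is a maximal isotropic subspace, i.e.\ an $\alpha$-plane or a $\beta$-plane, the two rulings of the Klein quadric. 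The delicate point is to show they are all of the same type: each off-diagonal block $\mathcal P_i\,\Omega\,\mathcal P_j^T=\lambda_{ij}F^{ij}$ has rank exactly $2$, and for maximal isotropic $W_i,W_j$ this rank equals $3-\dim(W_i\cap W_j)$, which is $2$ only when $W_i,W_j$ are two $\alpha$-planes of distinct points or two $\beta$-planes of distinct planes (an $\alpha$-plane and a $\beta$-plane meet in $0$ or $2$ dimensions). Applying, if necessary, the automorphism of $\RR^6$ induced by a correlation of $\p3$ — which preserves $\Omega$ up to scale and exchanges the two rulings — we may assume every $W_i$ is the $\alpha$-plane of a point $c_i\in\p3$. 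Picking any camera $P_i$ with center $c_i$ gives $\mathrm{rowspan}(\wedge^2 P_i)=W_i$, so $\wedge^2 P_i=A_i\mathcal P_i$ for some $A_i\in\mathrm{GL}_3$; replacing $P_i$ by $A_i^{-1}P_i$ (which keeps the center) yields $\wedge^2 P_i=\mathcal P_i$ and therefore $\psi(P_i,P_j)=\mathcal P_i\,\Omega\,\mathcal P_j^T=\lambda_{ij}F^{ij}$, proving compatibility; the centers are not all collinear because $\mathrm{rank}\,\mathbf F=6$ forces $\sum_iW_i=\RR^6$, impossible if all $c_i$ lie on a line. The collinear case of the theorem is identical with ``rank $6$, signature $(3,3)$'' replaced by ``rank $4$, signature $(2,2)$'' and block rank $2$: then $\mathrm{rowspan}(\mathcal P)$ is a hyperplane $H_\ell$ whose $\Omega$-radical is the bivector of a line $\ell$, every $W_i\subseteq H_\ell$ is forced to be the $\alpha$-plane of a point of $\ell$, and one reconstructs collinear cameras as before. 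Throughout, the fundamental action of \Cref{prop:fundamental_action_preserves_compibility} may be used to place the configuration in a convenient projective frame when carrying out the rank computations.
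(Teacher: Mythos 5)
The paper does not prove this statement: \Cref{thm: KGGB} is quoted from \cite{kasten2019gpsfm} and \cite{geifman2020averaging}, so there is no internal proof to compare against. Your argument is essentially the factorization proof of those sources recast in invariant language: writing $\mathbf F=\mathcal P\,\Omega\,\mathcal P^T$ with $\Omega$ the split form of signature $(3,3)$ is equivalent to their decomposition $\mathbf F=UV^T+VU^T$, and identifying the row spaces of the blocks $\wedge^2 P_i$ with the $\alpha$-planes of the camera centers on the Klein quadric is a clean way to organize the rank bookkeeping. The non-collinear half is correct: the forward direction is a routine dimension count, and in the converse the chain ``zero diagonal blocks $\Rightarrow W_i$ isotropic, block-row rank $3\Rightarrow\dim W_i=3$, off-diagonal rank $2\Rightarrow\dim(W_i\cap W_j)=1\Rightarrow$ all $W_i$ in one ruling'' is sound, resting on $\mathrm{rank}(\mathcal P_i\Omega\mathcal P_j^T)=3-\dim(W_i\cap W_j)$ and the parity criterion for the two rulings. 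Two normalization points deserve a line each: $\wedge^2:\mathrm{GL}_3(\RR)\to\mathrm{GL}_3(\RR)$, $B\mapsto\det(B)B^{-T}$, only hits matrices of positive determinant, so you can arrange $\wedge^2P_i=c_i\mathcal P_i$ for a nonzero scalar $c_i$ but not always $c_i=1$ (harmless, since compatibility is projective); and the ruling-swapping correlation may negate $\Omega$, which again only rescales the $\lambda_{ij}$.

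The one genuine gap is the collinear converse, which you dispatch in a sentence with a description that is not quite right. A symmetric $\mathbf F$ of rank $4$ and signature $(2,2)$ factors naturally with row span a \emph{four}-dimensional subspace $U_0$ on which $\Omega$ is nondegenerate of signature $(2,2)$ --- not a hyperplane $H_\ell$ --- and the block-row condition then forces each $W_i$ to be a $2$-dimensional maximal isotropic of $U_0$ (a pencil of lines), not an $\alpha$-plane. To finish you must choose an isotropic vector $\ell$, embed $U_0$ as a complement of $\langle\ell\rangle$ inside $\ell^{\perp_\Omega}$, check that $W_i\oplus\langle\ell\rangle$ is a maximal isotropic of $\RR^6$ containing $\ell$ (hence the $\alpha$-plane of a point of $\ell$ or the $\beta$-plane of a plane through $\ell$, with the off-diagonal rank-$2$ condition again forcing a single ruling), and verify that for cameras $P_i$ centered at the resulting collinear points the products $\wedge^2P_i\,\Omega\,(\wedge^2P_j)^T$ reproduce $\mathcal P_i\Omega\mathcal P_j^T$ because the extra $\langle\ell\rangle$-components lie in the radical of $\Omega|_{\ell^{\perp_\Omega}}$, together with the same $\mathrm{GL}_3$ normalization of each $P_i$ as before. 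All of this works, but none of it is in your write-up, and the sentence you do give (``$\mathrm{rowspan}(\mathcal P)$ is a hyperplane $H_\ell$ \dots every $W_i\subseteq H_\ell$ is forced to be the $\alpha$-plane of a point of $\ell$'') is false for the factorization you actually have in hand.
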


Our work regarding the $K_3$ and $K_4$ cases can be used to improve on this result by showing that the eigenvalue condition can be dropped in the cases below.

\begin{theorem}
\label{thm: redundant} In the collinear case of \Cref{thm: KGGB}, the eigenvalue condition can be dropped. In the non-collinear case, the eigenvalue condition can be dropped if in each image, no three epipoles lie on a line.
\end{theorem}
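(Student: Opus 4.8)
The plan is to reduce \Cref{thm: redundant} to an elementary fact about real symmetric matrices with vanishing diagonal blocks. First observe that the ``only if'' direction is automatic: if $\Set{F^{ij}}$ is compatible then, by \Cref{thm: KGGB}, some scaling makes $\textbf{F}=(\lambda_{ij}F^{ij})$ satisfy the rank, eigenvalue and block-rank conditions simultaneously, hence in particular the rank and block-rank conditions. So the real content is the converse, and for this it suffices to prove the following: \emph{if $\textbf{F}$ is a real symmetric $3n\times 3n$ matrix whose diagonal $3\times 3$ blocks vanish, whose rank equals $r\in\{4,6\}$, and each of whose $3\times 3n$ block rows has rank $r/2$, then $\textbf{F}$ has exactly $r/2$ positive and $r/2$ negative eigenvalues.} Taking $r=6$ yields the non-collinear case and $r=4$ the collinear case (the block-column condition of \Cref{thm: KGGB} is equivalent to the block-row condition since $\textbf{F}$ is symmetric).

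To prove this, write $\textbf{F}=NSN^{T}$ with $N$ a $3n\times r$ matrix of rank $r$ and $S=\diag(\pm 1)$ recording the signs of the nonzero eigenvalues, say $p$ positive and $q$ negative with $p+q=r$; such a factorization comes from the spectral theorem. Let $N_i$ denote the $i$-th block of three rows of $N$. The vanishing of the diagonal block, $\textbf{F}_{ii}=N_iSN_i^{T}=0$, says precisely that the row space $W_i\subseteq\RR^{r}$ of $N_i$ is totally isotropic for the nondegenerate form defined by $S$; since a totally isotropic subspace for a form of signature $(p,q)$ has dimension at most $\min(p,q)$, we get $\rank N_i=\dim W_i\le\min(p,q)$. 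On the other hand the $i$-th block row of $\textbf{F}$ equals $N_iSN^{T}$, and since $S$ is invertible and $N^{T}$ has rank $r$, the map $SN^{T}$ is onto $\RR^{r}$, so the image of $N_iSN^{T}$ is the column space of $N_i$ and hence $\rank N_i$ equals the rank of that block row, which is $r/2$ by hypothesis. Combining the two bounds gives $r/2\le\min(p,q)$, and together with $p+q=r$ this forces $p=q=r/2$, i.e. the eigenvalue condition holds.

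It remains to confirm that the hypotheses of \Cref{thm: redundant} do supply the block-rank condition of \Cref{thm: KGGB}, so that nothing is lost. From the convention $(F^{ij})^{T}=F^{ji}$ and $e_i^j=\ker F^{ji}$, the column space of $F^{ij}$ is the orthogonal complement of the epipole $e_i^j$ in the $i$-th image; consequently the $i$-th block row of $\textbf{F}$ has rank $3$ as soon as two of the epipoles $\{e_i^j\}_{j\ne i}$ are distinct and rank $2$ exactly when they all coincide. The hypothesis ``in each image no three epipoles lie on a line'' (which for $n\ge 4$ forces them pairwise distinct) puts every block row in the first situation, matching the non-collinear branch of \Cref{thm: KGGB}; ``all epipoles in each image coincide'' puts every block row in the second, matching the collinear branch. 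The step needing the most care is the rank identity $\rank N_i=\rank(i\text{-th block row of }\textbf{F})$ together with the isotropy bound; everything else is formal, and since the argument uses only the block-rank hypothesis already present in \Cref{thm: KGGB}, one could even phrase the conclusion without any further assumption on the epipoles.
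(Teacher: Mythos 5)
Your proposal is correct, and it takes a genuinely different route from the paper. The paper's proof is computational: it normalizes the fundamental matrices via the fundamental action, verifies in \texttt{Macaulay2} that the saturated ideal of $5\times 5$ (resp.\ $7\times 7$) minors of the $3$-view (resp.\ $4$-view) matrix coincides with the ideal cutting out the explicit compatibility conditions of \Cref{s: Kn}, and then bootstraps to general $n$ via \Cref{thm:compatible_if_each_sextuple_is_compatible}; the ``no three epipoles on a line'' hypothesis is used there to ensure that the sub-$3$- and $4$-view matrices inherit the block-rank conditions. You instead show that the eigenvalue condition is a formal consequence of the remaining hypotheses of \Cref{thm: KGGB}: writing $\mathbf{F}=NSN^{T}$ with $S$ the sign matrix of signature $(p,q)$, the vanishing diagonal blocks force the row space of each $N_i$ to be totally isotropic, hence of dimension at most $\min(p,q)$, while surjectivity of $SN^{T}$ gives $\rank N_i=\rank(N_iSN^{T})=r/2$, so $r/2\le\min(p,q)$ and $p=q=r/2$. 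Each of these steps is sound (the isotropy bound is the standard Witt-index bound, and the block-column condition is indeed equivalent to the block-row condition by symmetry). Your argument is shorter, computation-free, uniform in $n$, and -- as you observe -- never uses the epipole hypothesis, so it actually proves a strictly stronger statement than the one claimed. The trade-off is that it leans on the sufficiency direction of \Cref{thm: KGGB} as a black box, whereas the paper's computation has the side benefit of tying the rank conditions directly to the explicit polynomial conditions of \Cref{thm:compatible_forms}, \Cref{prop: K3 colin} and \Cref{thm: 4tuple-condition}.
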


\begin{proof} The structure of the proof is as follows. We prove in detail the when $n=3$ and sketch $n=4$ for Case 1. The \texttt{Macaulay2} code used in all these settings is attached. Then, we use \Cref{thm:compatible_if_each_sextuple_is_compatible} to argue that the general setting is implied by these case studies. 

We start with $n=3$ in the collinear setting. Let $F^{ij}$ be three fundamental matrices for which there exists a scaling $\lambda$ such that 
\begin{align}
    \begin{bmatrix}
        0 & F^{12} & F^{13} \\
        F^{21} & 0 & \lambda F^{23}\\
        F^{31} & \lambda F^{32} & 0
    \end{bmatrix}
\end{align}
is rank-4 and the $3\times 6$ and $6\times 3$ block rows and colums are rank-2. Note that we don't need to scale $F^{12}$ and $F^{21}$ or $F^{13}$ and $F^{31}$, because scaling each row and each column does not change the rank of the $3$-view matrix, so we may choose their scalings to be 1 without loss of generality. By the latter condition, $F^{12}$ and $F^{13}$ must have the same epipoles. We can say even more, namely that 
\begin{align}
e_{1}^{2}= e_{1}^{3}, \quad e_{2}^{1}= e_{2}^{3}, \quad e_{3}^{1}= e_{3}^{2}.
\end{align}  
As in the proof of \Cref{prop: K3 colin}, this assumption allows us to assume via fundamental action $F^{ij}$ take the form $G^{ij}$ of \Cref{eq: Gform}. We work in the polynomial ring $R=\QQ[a_{ij},b_{ij},c_{ij},d_{ij},\lambda]$, where $1\le i<j\le 3$ consider the following $3$-view matrix:
\begin{align}
    \textbf G(\lambda):=\begin{bmatrix}
        0 & G^{12} & G^{13} \\
        G^{21} & 0 & \lambda G^{23}\\
        G^{31} & \lambda G^{32} & 0
    \end{bmatrix}.
\end{align}
The rank of $\textbf G(\lambda)$ is at most 4 if and only if all $5\times 5$ minors of $\textbf G(\lambda)$ vanish and we therefore consider the ideal $I_{\mathrm{minors}}$ in $R$ defined by the $5\times 5$ minors of $\textbf G(\lambda)$. Since we don't want solutions with $\lambda=0$ or $\mathrm{rank} G^{ij}<2$, we saturate $I_{\mathrm{minors}}$ with respect to the ideals $I_\lambda=\langle \lambda\rangle$ and $ I_{ij}=\langle a_{ij}d_{ij}-b_{ij}c_{ij}\rangle$. After this is done in \texttt{Macaulay2}, we get a new ideal $I_{\mathrm{rank}}$ in $R$ with nine generators. 

Write $G^{ij'}$ for the matrices we get by removing the first row and column from $G^{ij}$. Recall that $G^{ij}$ as in \Cref{eq: Gform} are compatible if and only if they are rank-2 and up to scaling, $G^{12'}\star G^{13'}=G^{23'}$, i.e. \Cref{eq: compat aij} holds. As in the proof of \Cref{thm: Case 2}, this equality is described by vectorizing $G^{12'}\star G^{13'}$ and $G^{23'}$, putting them into a $4\times 2$ matrix and setting the rank to 1. By doing this we get 6 $2\times 2$ minors and we let $J_{\mathrm{red}}$ in $R$ be the ideal generated by these equations. 
Note that this ideal is reducible, as shown by the command \texttt{primaryDecomposition} in \texttt{Macaulay2}. One component consists of rank-deficient tuples $G^{ij'}$ and we call the other component $J_{\star}$. In particular, any tuple of rank-2 matrices $G^{ij'}$ satisfy \Cref{eq: compat aij} (up to scale) if and only if they satisfy the conditions of $J_{\star}$.

By \Cref{le: from Q to R}, if $G^{ij}$ are rank-2, on the form \Cref{eq: Gform}, and there exists $\lambda\neq 0$ with $\textbf G(\lambda)$ rank-4, then the entries of $G^{ij}$ satisfy the equations of $I_{\mathrm{rank}}$. In \texttt{Macaulay2} we see that the ideals $I_{\mathrm{rank}}$ and $J_{\star}$ are equal.  It follows that $G^{ij}$ satisfy the equations of $J_{\star}$. By the above, this implies that $G^{ij}$ are compatible, showing that the eigenvalue condition was not needed for compatibility. 

For $n=3$ in the non-collinear setting, we choose a fundamental action \begin{align}\begin{aligned}
    H_1&=\begin{bmatrix}
    e_1^2\,\, e_1^3\,\, \textbf{x}_1
    \end{bmatrix}, H_2=\begin{bmatrix}
    e_2^1\,\, e_2^3\,\, \textbf{x}_2
    \end{bmatrix}, H_3&=\begin{bmatrix}
    e_3^1\,\, e_3^2\,\, \textbf{x}_3
    \end{bmatrix},   
\end{aligned}
\end{align}
for $\textbf{x}_i$ making $H_i$ full-rank. Using this as our fundamental action, we get a new sextuple of fundamental matrices
\begin{align}G^{ij}=H_i^TF^{ij}H_j.
\end{align}
The sextuple $\Set{G^{ij}}$ is compatible if and only if $\Set{F^{ij}}$ is. Note that the epipoles of $G^{ij}$, denoted by $h_j^i$, are:
\begin{align}\begin{aligned}
    &{h}_1^2=[1,0,0], & &{h}_2^1=[1,0,0], & &{h}_3^1=[1,0,0],  \\  
    &{h}_1^3=[0,1,0], & &{h}_2^3=[0,1,0], & &{h}_3^2=[0,1,0].        
\end{aligned}
\end{align} 
The three matrices must be on the form: 
\begin{align}
\begin{aligned}\label{eq: G123 3noncol}
    &G^{12}=\left[\begin{matrix} 0&0&0\\0&x_{12}&y_{12}\\0&z_{12}&w_{12} \end{matrix}\right], &      &G^{13}=\left[\begin{matrix} 0&x_{13}&y_{13}\\0&0&0\\0&y_{13}&z_{13} \end{matrix}\right], &
    &G^{23}=\left[\begin{matrix} x_{23}&0&y_{23}\\0&0&0\\z_{23}&0&w_{23} \end{matrix}\right].
\end{aligned}
\end{align}
We work in the polynomial ring $R=\QQ[x_{ij},y_{ij},z_{ij},w_{ij},\lambda]$, where $1\le i<j\le 3$ consider the following $3$-view matrix:
\begin{align}\label{eq: 3view 76}
    \textbf G(\lambda):=\begin{bmatrix}
        0 & G^{12} & G^{13} \\
        G^{21} & 0 & \lambda G^{23}\\
        G^{31} & \lambda G^{32} & 0
    \end{bmatrix}.
\end{align}
The corresponding $I_{\mathrm{rank}}$, defined analogously to the collinear case, equals $\langle x_{12},x_{13},x_{23}\rangle$. This means $\textbf G(\lambda)$ being rank-6 for a $\lambda\neq 0$ implies $x_{12}=0,x_{13}=0,x_{23}=0$. 

As in the proof of \Cref{thm: 4tuple-condition}, if there is a solution of cameras $P_i$ with non-collinear centers to \Cref{eq: G123 3noncol}, then we may choose them to be
\begin{align}\begin{aligned}\label{eq:P123 77}
    &P_1=\left[\begin{matrix} 0&\alpha_1^1&0&*\\0&0&\alpha_1^2&*\\0&0&0&\alpha_1^3 \end{matrix}\right], 
    & &P_2=\left[\begin{matrix} \alpha_2^1&0&0&*\\0&0&\alpha_2^2&*\\0&0&0&\alpha_2^3\end{matrix}\right], 
    & &P_3=\left[\begin{matrix} \alpha_3^1&0&0&*\\0&\alpha_3^2&0&*\\0&0&0&\alpha_3^3 \end{matrix}\right],
\end{aligned}
\end{align}
where $\alpha_i^j$ are non-zero scalars, and $*$ are some other (possible zero) scalars. Computing the fundamental matrices of these four cameras, one can check that by the degrees of freedom of the cameras established in \Cref{eq:P123 77}, any triple of fundamental matrices on the form \Cref{eq: G123 3noncol} with $x_{ij}=0$ has a solution with cameras on the form \Cref{eq:P123 77}. It follows that if there is a non-zero scalar $\lambda$ for which \Cref{eq: 3view 76} is rank-6, then the triples of fundamental matrices $G^{ij}$ are compatible, which is sufficient.    

In the setting of $n=4$ in Case 1, we use the same ideas and therefore only sketch the proofs. Start with a 4-view matrix $\textbf F$ that is rank-6 and with block rows and columns of rank-3 as in \Cref{thm: KGGB}. Then take any sub 3-view matrix $\textbf F'$. It is at most rank-6. However, since the epipoles in each image are all distinct, all its block rows and columns must be rank-3. This is only possible if $\textbf F'$ is at least rank-6. Now we can apply the above to see that the three fundamental matrices of this 3-view matrix are compatible. In other words, we have triple-wise compatibility. Then we can assume the fundamental matrices to be of the form \Cref{eq: Gform c1} and look at the ideal generated by the $7\times 7$ minors given such matrices with indeterminate entries. Here we scale $G^{23},G^{24},G^{34}$ with $\lambda_1,\lambda_2,\lambda_3$, respectively. After saturation of $\lambda_i$ and rank-deficienly loci, and after elimination of $\lambda_i$, we get in each case an ideal that we call $I_{\mathrm{rank}}$. This ideal in each case describes the same conditions as the ideal generated by \Cref{eq:4_tuple_as_x_y}. This means that the rank condition implies compatibility.

Now we move on to general values of $n$. First, in the general collinear case, let $F^{ij}$ be fundamental matrices for which there are scalars $\lambda_{ij}$ such that the $n$-view matrix $\textbf F=(\lambda_{ij}F^{ij})$ is rank-4 and whose $3\times 3n$ and $3n\times 3$ block rows and columns are rank-2. By \Cref{thm:compatible_if_each_sextuple_is_compatible}, it suffices to show triple-wise compatibility. Take any 3-view submatrix $\textbf F'$. It is at most rank-4 and its block rows and columns at most rank-2. But since the fundamental matrices are rank-2, the block rows and columns must be at least rank-2 and it follows that the 3-view matrix itself is at least rank-4. Therefore triple-wise compatibility follows from an earlier step of this proof. By similar logic, if the $n$-view matrix $\textbf F$ instead is rank-6 with block rows and columns of rank 3, then this also applies for any sub 4-view matrix $\textbf F'$, since we assumed that any three epipoles in each image do not lie on a line. In particular, we are then in Case 1 and by the above, we have quadruple-wise compatibility. By \Cref{thm:compatible_if_each_sextuple_is_compatible}, this suffices.
\end{proof}

\section{The Cycle Theorem}\label{s: Can} 

Although the focus of this paper has been on complete graphs, in this section we state the cycle theorem, which holds for all graphs. We use this theorem to give an alternative derivation of necessary conditions for compatibility from \Cref{s: Kn}. We consider sets of fundamental matrices $\{F^{ij}\}$, where the index pairs $(ij)$ are a subset of all $n\choose 2$ possible ones. Let $\mathcal G=(V,E)$ denote the corresponding graph, where $V$ is the set of indices and $E$ the set of pairs of indices for which there is a fundamental matrix in our set. The definitions of compatibility and solution extend naturally to this setting. 

The theorem below gives a necessary and sufficient condition for when a set of fundamental matrices are compatible using the cycle condition for any graph $\mathcal G$. Recall that a directed cycle $C$ of a graph is a closed path, i.e. a path that starts and ends at the same vertex. Let $E(C)$ denote its directed edges. 

\begin{theorem}\label{thm:cycle} Let $\Set{F^{ij}}$ be a set of fundamental matrices with corresponding graph $\mathcal G$. $\Set{F^{ij}}$ is compatible if and only if there are matrices $H_i\in \mathrm{GL}_{3}$ and scalars $\lambda_{ij}=\lambda_{ji}\neq 0$ such that $G^{ij}:=\lambda_{ij}H_i^TF^{ij}H_j$ satisfy
\begin{align}\label{eq: cycleC}
    \sum_{(ij)\in E(C)}G^{ij}=0, \textnormal{ for each directed cycle }C \textnormal{ of } \mathcal G. 
\end{align}
In particular, any set of $3\times 3$ rank-2 matrices $G^{ij}$ satisfying the \emph{\textnormal{cycle condition}} \Cref{eq: cycleC} are the fundamental matrices of some set of cameras.
\end{theorem}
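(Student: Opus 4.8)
The plan is to reduce both implications to the normal form $P=[I\mid -c]$ for cameras, exploiting that $\psi([I\mid -a],[I\mid -b])=[a-b]_\times$ (a one-line determinant check), and that for such skew-symmetric matrices the cycle condition is exactly the statement ``a closed $\RR^3$-valued $1$-cochain on a graph is a coboundary.''

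\emph{Forward direction.} Let $P_1,\dots,P_n$ be a solution. The centers $\ker P_i$ are finitely many points of $\p3$, so there is a hyperplane avoiding all of them and an $H\in\mathrm{PGL}_4$ taking it to $\{x_4=0\}$; replacing each $P_i$ by $P_iH^{-1}$ changes the $F^{ij}$ only by a common nonzero scalar (the $\mathrm{PGL}_4$-part of \Cref{prop:fundamental_action_preserves_compibility}), so we may assume every center has nonzero last coordinate. Then $P_i=B_i[I\mid -c_i']$ for some $B_i\in\mathrm{GL}_3$, $c_i'\in\RR^3$, the $3\times3$ block being invertible because the center is not at infinity. Put $H_i:=B_i$. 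By \Cref{prop:fundamental_action_preserves_compibility} together with $\psi([I\mid -a],[I\mid -b])=[a-b]_\times$, the matrix $H_i^TF^{ij}H_j$ equals $[c_i'-c_j']_\times$ up to a nonzero scalar, and that scalar does not depend on $i,j$; absorbing its inverse into $\lambda_{ij}$ gives $G^{ij}=[c_i']_\times-[c_j']_\times$, so $\sum_{(ij)\in E(C)}G^{ij}$ telescopes to $0$ for every directed cycle $C$.

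\emph{Converse.} Suppose $H_i,\lambda_{ij}$ as in the statement exist. Applying the cycle condition to the closed walk $i\to j\to i$ gives $G^{ij}+G^{ji}=0$, and since $G^{ji}=(G^{ij})^T$ (using $\lambda_{ij}=\lambda_{ji}$ and $(F^{ij})^T=F^{ji}$), each $G^{ij}$ is skew-symmetric; hence $G^{ij}=[g_{ij}]_\times$ for a unique $g_{ij}\in\RR^3$, with $g_{ji}=-g_{ij}$ and $g_{ij}\neq 0$ since $G^{ij}$ has rank $2$. As $[\,\cdot\,]_\times$ is linear and injective, the remaining cycle conditions read $\sum_{(ij)\in E(C)}g_{ij}=0$. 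Working in each connected component separately, fix a spanning tree and define $c_i\in\RR^3$ by summing the $g$-values along the tree path to a chosen root; then $g_{ij}=c_i-c_j$ holds on tree edges by construction and on every remaining edge via the cycle condition for the fundamental cycle it closes. Now $P_i:=[I\mid -c_i]$ are cameras, with distinct centers whenever $(ij)\in E$ because $c_i\neq c_j$, and $\psi(P_i,P_j)=[c_i-c_j]_\times=G^{ij}$. Finally $P_i':=H_iP_i$ are cameras with $\psi(P_i',P_j')=H_i^{-T}G^{ij}H_j^{-1}=\lambda_{ij}F^{ij}$ by \Cref{prop:fundamental_action_preserves_compibility}, i.e.\ $F^{ij}$ up to scale, so $\{F^{ij}\}$ is compatible. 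The ``in particular'' claim is the case $H_i=I$, $\lambda_{ij}=1$.

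The main obstacle is not a deep one but a bookkeeping one: keeping the scalings straight through the fundamental action — this is precisely why the statement needs the scalars $\lambda_{ij}$ and not merely the $H_i$ — together with the observation that it is the $2$-cycle instance of the cycle condition that upgrades the rank-$2$ matrices $G^{ij}$ to skew-symmetric ones. Without skew-symmetry the converse is false, since rank-$2$ matrices that merely sum to zero around every triangle need not be compatible. Everything beyond that point is elementary: the normal-form computation of $\psi$, and the exactness of closed $\RR^3$-valued $1$-cochains on a connected graph.
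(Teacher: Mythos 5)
Your proof is correct and follows essentially the same route as the paper's: normalize the cameras to the form $[I\mid t]$ so that the fundamental matrices become differences of cross-product matrices and telescope around directed cycles, and conversely use the $2$-cycles to force skew-symmetry and a spanning tree to integrate the closed cochain $g_{ij}$ to translation vectors (the paper packages the non-tree edges as an induction via \Cref{le: graph}, while you invoke the fundamental cycle of each non-tree edge directly — a cosmetic difference). The only blemish is the parenthetical claim that the scalar relating $H_i^TF^{ij}H_j$ to $[c_i'-c_j']_\times$ is independent of $(i,j)$, which is neither true in general (it depends on the chosen affine representatives of the $F^{ij}$) nor needed, since you absorb it into the edge-dependent $\lambda_{ij}$ anyway.
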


This theorem is very similar to the result \cite[Proposition 5]{arrigoni2018bearing}, which appears in the context of parallel rigidity and is relevant for the solvability of essential matrices. Observe that the cycles of length two in \Cref{eq: cycleC} imply that $G^{ij}$ are skew-symmetric.

\begin{proof}[Proof of \Cref{thm:cycle}, direction $\Rightarrow$] Let $\Set{F^{ij}}$ be a compatible set of fundamental matrices, with a solution of cameras $P_i$. By right action of $H\in \mathrm{GL}_4$, we may assume that the centers of these cameras has a non-zero last coordinate. Then the first three vectors must be linearly independent and the cameras can be written $[H_i|v^{(i)}]$, where $H_i\in \mathrm{GL}_3$ and $v^{(i)}\in \RR^3$. By left multiplication with $H_i^{-1}$, we may further assume that all cameras are of the form $C_i=[I| t^{(i)}]$, where $t^{(i)}\in \RR^3$. Recall that definition of $[t]_\times$ for $t\in \RR^3$ from \Cref{ss: K3}. One can check that the fundamental matrix of $C_i$ and $C_j$ is
\begin{align}\label{eq: fundcan}
\begin{aligned}
    [t^{(j)}]_\times-[t^{(i)}]_\times=[t^{(j)}-t^{(i)}]_\times \in \RR^{3\times 3},
\end{aligned}
\end{align}
and we call these skew-symmetric matrices $G^{ij}$. Note that $G^{ij}$ are scalings of $H_i^TF^{ij}H_j$. If we sum $G^{ij}$ for $(ij)$ in a cycle $C$, we must get 0 by \Cref{eq: fundcan}. 
\end{proof}

For the other direction, we need a lemma.

\begin{lemma}\label{le: graph} Let $\mathcal G$ be a connected graph and $T$ any spanning tree subgraph. Then there is a sequence $T^i\subseteq \mathcal G$ such that
\begin{align}
    T=T^0\subseteq \cdots \subseteq T^k=\mathcal G,
\end{align}
where $T^{i+1}$ contains exactly one more edge than $T^i$ and this edge is part of a cycle of $T^{i+1}$.
\end{lemma}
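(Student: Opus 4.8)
The plan is to reintroduce the non-tree edges one at a time. Since $T$ is a spanning tree of the connected graph $\mathcal{G}$, it contains every vertex of $\mathcal{G}$ and has exactly $|V|-1$ edges, so the edge set $E(\mathcal{G})\setminus E(T)$ is finite; enumerate it arbitrarily as $e_1,\ldots,e_k$ and set $T^i:=T\cup\{e_1,\ldots,e_i\}$ for $0\le i\le k$. Then $T^0=T$, $T^k=\mathcal{G}$, and each $T^{i+1}$ has exactly one more edge than $T^i$, so only the cycle condition remains to be checked. (If $T=\mathcal{G}$ then $k=0$ and the one-term sequence $T^0=\mathcal{G}$ already works.)

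The key step is the standard observation that adding a chord to a spanning tree creates a cycle. Write $e_{i+1}=(uv)$. Because $T$ is connected and spanning, it contains a path $\pi$ from $u$ to $v$; this path uses only edges of $T$, hence lies inside $T^{i+1}$, and since $e_{i+1}\notin E(T)$ the edge $e_{i+1}$ does not occur in $\pi$. Therefore $\pi$ together with $e_{i+1}$ is a cycle of $T^{i+1}$ that contains the edge $e_{i+1}$; taking $\pi$ to be the unique simple $u$--$v$ path in $T$ makes this a simple cycle. This verifies the required property of the added edge and completes the construction.

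There is no real obstacle here: the result is this chord-closing fact applied repeatedly. The only points deserving a word of care are the trivial case $T=\mathcal{G}$ and the simultaneous bookkeeping that at each step we add exactly one edge and that this edge closes a cycle — both immediate from the enumeration and the path-plus-edge argument above. An equally short alternative is to induct on $|E(\mathcal{G})|-|E(T)|$, but the explicit enumeration seems cleanest, and it is the version I would write up.
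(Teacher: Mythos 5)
Your proof is correct. It takes the same basic route as the paper---interpolating between $T$ and $\mathcal{G}$ by a chain of subgraphs differing in one edge---but the key step is justified differently. The paper builds the chain in the reverse direction, deleting non-tree edges from $\mathcal{G}$ one at a time, and verifies the cycle condition by contradiction: if the removed edge were not on a cycle of $T^{i+1}$, then $T^i$ would be disconnected, hence so would the spanning subgraph $T$. You instead add the chords one at a time and exhibit the cycle explicitly as the unique $u$--$v$ path in $T$ closed up by the new edge $(uv)$. Your version is slightly more informative (it produces the cycle rather than merely asserting its existence, and the cycle can be taken simple), while the paper's is marginally shorter; both are complete, and your handling of the degenerate case $T=\mathcal{G}$ is a small point the paper leaves implicit.
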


\begin{proof} We get $T^{k-1}$ from $T^k$ by removing an edge of $T^k$ that is not in $T$. We repeat this process until we reach $T^0$. To see that this suffices, assume by contradiction that the edge removed from $T^{i+1}$ is not part of a cycle of $T^{i+1}$. Then $T^i$ would have to be disconnected. This implies that $T$ cannot be connected, which is a contradiction.
\end{proof}

\begin{proof}[Proof of \Cref{thm:cycle}, direction $\Leftarrow$] 
We find a set of cameras $C_i$ such that $\psi(C_i,C_j)$ equals $G^{ij}$ for every edge of $\mathcal G$. Since $F^{ij}$ and $G^{ij}$ are equivalent under fundamental action, this is enough. We may without restriction assume that $\mathcal G$ is connected with $n$ nodes. Since $G^{ij}$ are skew-symmetric and rank-2, there are non-zero $g^{ij}\in \RR^3$ such that $G^{ij}=[g^{ij}]_\times$. The cycle condition is then equivalent to 
\begin{align}
    \sum_{(ij)\in E(C)}g^{ij}=0, \textnormal{ for each directed cycle }C \textnormal{ of } \mathcal G.
\end{align}
Let $T$ be a spanning tree subgraph of $\mathcal G$. 

Fix $i=1$ and let $t^{(1)}=0\in \RR^3$. To any node $v$ in $T$, there is a unique path with no repeated vertices from $1$ to $v$ in $T$, since $T$ is a tree. Let $\sigma_{u,v}=\{u=i_1,i_2,\ldots,i_k=v\}$ denote the unique path between two vertices $u,v$ of $T$. For $i>1$, define
\begin{align}\label{eq: tv sum}
  t^{(v)}:=\sum_{(ij)\in \sigma_{1,v} }g^{ij}.  
\end{align}
This gives us cameras $C_i=[I|t^{(i)}]$ for each $i=1,\ldots,m$. We must check that $G^{ij}=\psi(C_i,C_j)$ for every edge of $\mathcal G$. Recall that for cameras on this form, $\psi(C_i,C_j)=[t^{(j)}-t^{(i)}]_\times$. If $(ij)$ is an edge of $T$, then $t^{(j)}-t^{(i)}=g^{ij}$ by construction, which shows $G^{ij}=\psi(C_i,C_j)$. For $(ij)$ that are not edges of $T$, we proceed as follows. Consider the sequence $T^i$ of \Cref{le: graph}. We proceed via induction to show that $G^{ij}=\psi(C_i,C_j)$ for every edge of $T^l$ for any $l$. The base case $T^0=T$ is already done. Assume that $C_i$ satisfy $G^{ij}=\psi(C_i,C_j)$ for all edges of $T^l$. In $T^{l+1}$, there is precisely one new edge $(ij)$ and that edge is part of a cycle $C$ of $T^{l+1}$. Using \Cref{eq: tv sum}, we get after some cancellation for some vertex $u$ of the cycle that 
\begin{align}
  \psi(C_i,C_j)&=[t^{(j)}-t^{(i)}]_\times \\
  &=\sum_{(st)\in \sigma_{u,j}} [g^{st}]_\times-\sum_{(st)\in \sigma_{u,i}} [g^{st}]_\times.  
\end{align}
Since $G^{ij}$ are skew-symmetric by the conditions of the 2-cycles, $g^{ji}=-g^{ij}$. Therefore we get 
\begin{align}
\psi(C_i,C_j)=\sum_{(st)\in \sigma_{i,j}} [g^{st}]_\times.  
\end{align}
However, by the cycle condition for the cycle $C$, this equals $[g^{ij}]_\times$, which shows $G^{ij}=\psi(C_i,C_j)$ for every edge in $T^{l+1}$ and completes the induction.
\end{proof}


For the rest of this section, we apply the cycle theorem to find conditions that must hold for compatible fundamental matrices. For instance, let $G^{12},G^{13}$ and $G^{23}$ be fundamental matrices satisfying the cycle condition. By the 2-cycles, we can write $G^{ij}=[g^{ij}]_\times$ for some $g^{ij}\in \RR^3$. Letting $\{h_j^i\}$ be the epipoles of $\Set{G^{ij}}$ defined as
\begin{align}\label{eq: Gepip}
    h_j^i:=(g_1^{ij},g_2^{ij}, g_3^{ij})^T,
\end{align}
one can check that 
\begin{align}    (h_2^1)^TG^{23}h_3^1=\det [g^{12}\;g^{23}\;g^{31}].
\end{align}
Therefore, $g^{12}+g^{23}+g^{31}=0$ implies $(h_2^1)^TG^{23}h_3^1=0$. Now if $F^{12},F^{13}$ and $F^{23}$ are compatible fundamental matrices, then by the cycle theorem there is a scaling and fundamental action such that $G^{ij}=\lambda_{ij}H_i^TF^{ij}H_j$ satisfy the cycle condition. This means that $F^{ij}$ must satisfy $e_2^1F^{23}e_3^1=0$, hence giving us \Cref{eq_compatible}.

We next sketch an argument for why the $n$-view fundamental matrix $\mathbf{F}$ (see \Cref{s: Kn}) for compatible $\Set{F^{ij}}$ is at most rank 6 given appropiate scalings. For the sake of simplicity assume $n=4$, but note that the below principle directly extends to any $n$. Let $\Set{G^{ij}}$ be six fundamental matrices satisfying the cycle condition. Consider the $4$-view matrix $\textbf{G}=(G^{ij})_{ij}$. Subtracting the first row of $\textbf{G}$ from the other rows, we have 
\begin{align}
    \textbf{G}=&\left[\begin{matrix}0 & G^{12} & G^{13} & G^{14}\\  
G^{21} & 0 & G^{23} & G^{24}\\ 
G^{31} & G^{32} & 0 & G^{34}\\ 
G^{41} & G^{42} & G^{43} & 0\end{matrix}\right]
\sim \left[\begin{matrix}0 & G^{12} & G^{13} & G^{14}\\  
-G^{12} & -G^{12} & -G^{12} & -G^{12}\\ 
-G^{13} & -G^{13} & -G^{13} & -G^{13}\\ 
-G^{14} & -G^{14} & -G^{14} & -G^{14} \end{matrix}\right],\label{eq: Gauss}
\end{align}
where $\sim$ denotes equivalence under Gaussian eliminiation. The rank of the first three rows of \Cref{eq: Gauss} is at most 3, and the rank of the last nine rows is the rank of the first three columns of \Cref{eq: Gauss}, which is at most 3. In total, the matrix is of rank at most 6. Now if $\Set{F^{ij}}$ is a set of compatible fundamental matrices, there is a scaling and fundamental action such that $G^{ij}=\lambda_{ij}H_i^TF^{ij}H_j$ satisfy the cycle condition. Define the $n$-view fundamental matrix $\textbf F=(\lambda_{ij}F^{ij})_{ij}$. Since the rank of a matrix is invariant under conjugation, the above shows that $\mathrm{rank}\;\textbf F\le 6$.

Finally, we use the cycle theorem to give alternative proof that \Cref{eq: 4-tuple} is necessary to ensure compatibility. Let $\Set{G^{ij}}$ be 6 skew-symmetric matrices. Again, write $G^{ij}=[g^{ij}]_\times$ and let $\lambda_{ij}=\lambda_{ji}\neq 0$ be scalars such that $\lambda_{ij}G^{ij}$ satisfy the cycle condition. Then 
\begin{align}
    \lambda_{kl}g^{kl}=-\lambda_{jk}g^{jk}-\lambda_{ij}g^{ij}-\lambda_{li}g^{li},
\end{align}
for all indices $i,j,k,l\in \{1,2,3,4\}$ and it follows that 
\begin{align}
\begin{aligned}
    &\det [\lambda_{ij}g^{ij}\;\lambda_{jk}g^{jk}\;\lambda_{kl}g^{kl}]\\
    =&\det [\lambda_{ij}g^{ij}\;\lambda_{jk}g^{jk}\;-\lambda_{li}g^{li}]\\
    =&-\det[\lambda_{li}g^{li}\;\lambda_{ij}g^{ij}\;\lambda_{jk}g^{jk}].\label{eq: det3x3}
\end{aligned}
\end{align}
Factoring out the constants, and with $h_j^i$ defined as in \Cref{eq: Gepip}, we get
\begin{align}
    \lambda_{ij}\lambda_{jk}\lambda_{ki}(h_j^i)^TG^{jk}h_k^l=-\lambda_{li}\lambda_{ij}\lambda_{jk}(h_i^l)^TG^{ij}h_j^k.
\end{align}
Assuming that all epipolar numbers $ (h_j^i)^TG^{jk}h_k^l$ are non-zero, and recalling that $\lambda_{ij}$ are non-zero, we find 
\begin{align}\label{eq: epip ratio}
\frac{(h_j^i)^TG^{jk}h_k^l}{(h_i^l)^TG^{ij}h_j^k}=-\frac{\lambda_{li}}{\lambda_{ki}}.
\end{align}
Further, using $\lambda_{ij}=\lambda_{ji}$,
\begin{align}\label{eq: lam prod}
\frac{\lambda_{31}}{\lambda_{21}}\frac{\lambda_{12}}{\lambda_{32}}\frac{\lambda_{23}}{\lambda_{43}}\frac{\lambda_{34}}{\lambda_{24}}\frac{\lambda_{24}}{\lambda_{14}}\frac{\lambda_{41}}{\lambda_{31}}=1.
\end{align}
Combining \Cref{eq: epip ratio,eq: lam prod}, we get \Cref{eq: 4-tuple} for $\Set{\lambda_{ij}G^{ij}}$. Now if we start with a set of six compatible fundamental matrices $\Set{F^{ij}}$, then by \Cref{thm:cycle}, there is a fundamental action such that $G^{ij}=H_i^TF^{ij}H_j$ are skew-symmetric and there are scalars $\lambda_{ij}$ making the cycle condition hold for $\Set{\lambda_{ij}G^{ij}}$. Then \Cref{eq: 4-tuple} holds for $\Set{G^{ij}}$ and by the invariance of the epipolar numbers under fundamental action, we get \Cref{eq: 4-tuple} for $\Set{F^{ij}}$.


\section{Image of the Fundamental Map}\label{s: Image} Related to the study of the constraints satisfied by compatible fundamental matrices, is the image of the \emph{fundamental map} given a viewing graph $\mathcal{G}=(V,E)$:
\begin{align}
    \Psi_{\mathcal{G}}:(\PP^{3\times 4})^m&\dashrightarrow (\PP^{3\times 3})^{E},\\
    (P_1,\ldots,P_m)&\mapsto (\psi(P_i,P_j))_{(ij)\in E}.
\end{align}
The fundamental map sends real projective camera matrices to a set of corresponding fundamental matrices. We define the \emph{viewing graph variety} $\mathcal{V}_\mathcal{G}$ to be the \emph{Zariski closure} of the image $\mathrm{Im}\Psi_\mathcal{G}$. By Chevalley's theorem, in this case the Zariski closure is equal to the Euclidean closure \cite[Theorem 4.19]{michalek2021invitation}. 

A natural question from the algebraic geometry point of view is if this variety is described by the constraints we proposed in \Cref{s: Kn}, in the complete graph case. We prove that that is not the case, and leave it is as an open problem to describe the viewing graph variety precisely.

\begin{proposition} The viewing graph variety of $K_n$ for $n\ge 3$ is a proper subset of the variety in $(\PP^{3\times 3})^{n\choose 2}$ defined by the $3{n\choose 3}$ triple-wise constraints and the ${n\choose 4}$ quadruple-wise constraints of \Cref{thm: 4tuple-condition}.
\end{proposition}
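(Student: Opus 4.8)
The plan is to exhibit, for each $n \ge 3$, a point in the variety $W_n \subseteq (\PP^{3\times 3})^{\binom n2}$ cut out by the triple-wise constraints and the quadruple-wise constraints of \Cref{thm: 4tuple-condition} that does not lie in the viewing graph variety $\mathcal V_{K_n}$. Since $\mathcal V_{K_n} \subseteq W_n$ by \Cref{thm:compatible_forms} and \Cref{thm: 4tuple-condition} (the image consists of compatible tuples, which satisfy these equations, and the variety is the Zariski closure of that image), it suffices to show the inclusion is strict. The most economical route is to handle $n=3$ and $n=4$ directly and then bootstrap to all larger $n$.

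First I would treat $n=4$. Here $W_4$ is defined by the six triple-wise conditions together with the single equation \Cref{eq: 4-tuple}. By \Cref{thm: 4tuple-condition}, \Cref{thm: Case 2}, \Cref{thm: Case 3} and \Cref{thm: Case 4}, a sextuple $\Set{F^{ij}}$ is compatible if and only if it satisfies the triple-wise conditions \emph{and} the extra conditions appropriate to the configuration of its epipoles — and in Case 2 those extra conditions include \Cref{eq: ijk F} and \Cref{eq:long_equation}, which are genuinely stronger than \Cref{eq: 4-tuple} alone. So the strategy is: produce a sextuple whose epipoles are in the Case 2 configuration (distinct and collinear in each image), that satisfies all triple-wise conditions and \Cref{eq: 4-tuple}, but \emph{violates} one of the Case 2 equations, e.g. \Cref{eq: ijk F}. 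Concretely, one can work in the normalized form \Cref{eq: Gform c2} with parameters $x_{ij},y_{ij},z_{ij}$, impose \Cref{eq:condition1} and the triple-wise constraints, check that \Cref{eq: 4-tuple} is automatically satisfied in this normalization (it reduces to a consequence of the $x,y$ relations), and then choose the $z_{ij}$ so that \Cref{eq:condition2} fails. Such a point lies in $W_4 \setminus \mathcal V_{K_4}$: it is not compatible, and since $\mathcal V_{K_4}$ is closed and is the closure of the compatible locus, a single incompatible point in $W_4$ that is not a limit of compatible tuples suffices — but one must be slightly careful and argue it is not in the closure. The cleanest way around the closure subtlety is to note that $W_4$ is irreducible of the same dimension as $\mathcal V_{K_4}$ only if they coincide; alternatively, exhibit a whole subvariety of $W_4$ (a positive-dimensional family obtained by varying the $z_{ij}$ away from the compatibility locus) disjoint from $\mathcal V_{K_4}$, which cannot be contained in the closure of the compatible tuples since those already form a subvariety of the same ambient Case-2 stratum of strictly smaller dimension.

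For $n=3$, the variety $W_3$ is defined purely by the triple-wise conditions \Cref{eq_compatible} (there are no quadruple constraints), so I would exhibit a triple $\Set{F^{12},F^{13},F^{23}}$ with all epipoles coinciding in each image — the collinear stratum — satisfying \Cref{eq_compatible} but failing the condition $(F^{12})^T[e_1^2]_\times F^{13} = F^{23}$ of \Cref{prop: K3 colin}; \Cref{ex:counterexample1} already provides exactly such a triple. Again one argues it is not in $\mathcal V_{K_3}$ by a dimension count on the collinear stratum: the equations of \Cref{eq_compatible} do not cut out the compatible collinear locus, and a generic incompatible collinear triple satisfying \Cref{eq_compatible} forms a family of dimension exceeding that of the compatible collinear triples, hence cannot lie in the closure of the latter (compatible non-collinear triples lie in a different stratum and their closure meets the collinear stratum only in a lower-dimensional set). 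Finally, for general $n > 4$, I would embed the $n=4$ (or $n=3$) counterexample: take the bad sextuple on indices $\{1,2,3,4\}$ and extend to a full $\binom n2$-tuple by completing with fundamental matrices of some fixed generic cameras $P_5,\dots,P_n$ together with compatible choices of $F^{i j}$ for $i \le 4 < j$ — one checks the completed tuple still satisfies all triple-wise and quadruple-wise constraints (these only involve $\le 4$ indices at a time, and the $\{1,2,3,4\}$ block was arranged to satisfy them) while remaining incompatible, because compatibility of the whole forces compatibility of the $\{1,2,3,4\}$ sub-sextuple by \Cref{thm:compatible_if_each_sextuple_is_compatible}.

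The main obstacle I anticipate is the closure argument: producing a point of $W_n \setminus \mathcal V_{K_n}$ is easy given the explicit theorems, but showing that point (or family) is genuinely outside the Zariski closure of the compatible locus — rather than merely outside the compatible locus itself — requires a careful dimension comparison on the relevant epipole-configuration stratum, or equivalently identifying an irreducible component of $W_n$, not entirely contained in $\mathcal V_{K_n}$, on which a Case-2-type obstruction is not identically zero. Getting this stratification bookkeeping right, and in particular verifying that \Cref{eq: 4-tuple} really is implied on the Case 2 stratum by the triple-wise relations plus \Cref{eq:condition1} (so that our witness lies in $W_n$), is where the real work is.
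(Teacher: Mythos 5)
Your overall strategy is necessarily the same as the paper's -- exhibit a point satisfying the constraints that is not in the viewing graph variety -- but your choice of witness and, crucially, your method for certifying that the witness lies outside the \emph{closure} of the compatible locus both differ from the paper's, and the latter is where your argument breaks down. You propose all-rank-2 witnesses sitting in degenerate epipole strata (the collinear triple of \Cref{ex:counterexample1} for $n=3$, a Case-2 violation of \Cref{eq:long_equation} for $n=4$) and then argue they escape $\mathcal V_{K_n}$ by comparing dimensions \emph{within that stratum}: the incompatible family is bigger than the compatible family of the same stratum, hence cannot lie in the closure of the latter. But $\mathcal V_{K_n}$ is the closure of \emph{all} compatible tuples, dominated by the generic (Case 1) locus; for $n=3$ that locus is $18$-dimensional, so its boundary can have dimension up to $17$, which is ample room to contain the entire $15$-dimensional collinear stratum of $W_3$ -- including your witness. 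Comparing against the $12$-dimensional collinear-compatible family proves nothing about containment in the closure of the $18$-dimensional generic family. The same objection applies verbatim to your Case-2 argument for $n=4$. So it is not even established that \Cref{ex:counterexample1} lies outside $\mathcal V_{K_3}$; deciding that would itself require a limit analysis that your proposal does not supply. You flag the closure issue as ``where the real work is,'' but the fix you sketch does not do that work.

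The paper sidesteps this entirely with a different kind of witness and a local argument. It takes $F^{12},F^{13}$ of rank $2$ with \emph{distinct} epipoles in the first image and a rank-$1$ matrix $S^{23}$ in the third slot; since the epipole of a rank-$1$ matrix is taken to be $0$, the triple-wise constraints hold trivially, so the point lies in $W_3$. Because the first two matrices have distinct epipoles, they pin down $P_1,P_2,P_3$ up to an explicit four-parameter family, so the entire fiber of admissible third matrices over a neighborhood of $(F^{12},F^{13})$ can be written down explicitly (\Cref{eq: F23}); every matrix in that fiber, and every small perturbation of it, has a zero pattern that $S^{23}$ violates. Since the Zariski closure of the image equals its Euclidean closure (Chevalley), this local separation finishes the proof, with no stratum-by-stratum dimension bookkeeping. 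If you want to salvage your route, you would need to replace the dimension count by an actual obstruction -- e.g.\ a function vanishing on all of $\mathcal V_{K_3}$ but not at your collinear witness, or a direct analysis of all limits of generic compatible triples into the collinear stratum -- neither of which is in your proposal.
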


Note that strictly speaking, $e_j^i$ is not a polynomial in the entries of $F^{ij}$, because there is no way to write a generator of
the left kernel of a matrix $X$ as a polynomial expression that works for every $3\times 3$ matrix of rank-2. However, one can for instance turn the expression $(e_i^s)^TF^{ij}e_j^s=0$ into a polynomial system in $F^{si},F^{ij}$ and $F^{sj}$ by defining the epipoles on affine patches of the fundamental matrices, which we don't explain here in further detail. In any case, for a rank-1 $3\times 3$ matrices, the epipoles are understood as the $0$ vector.

\begin{proof} We do the proof for $K_3$, but note that our counterexample below can be directly extended to any $K_n$. 

The Euclidean closure of the set of three camera matrices $(P_1,P_2,P_3)\in (\PP^{3\times 4})^3$ of different centers is all of $(\PP^{3\times 4})^3$. Then since $\mathcal{V}_{K_3}$ is the Euclidean closure of $\mathrm{Im}\Psi_{K_3}$, any of its elements can be arbitrarily approximated by the image of full-rank cameras. We give an example showing that the triple-wise constraints are not enough to describe $\mathcal{V}_{K_3}$ by finding an element that cannot be approximated in the way described above. Consider the following example:
    \begin{align}
        F^{12}=\begin{bmatrix}
            0 & 1 & 0 \\
            -1 & 0 & 0\\
            0 & 0 & 0
        \end{bmatrix},  F^{13}=\begin{bmatrix}
            0 & 0 & 1 \\
            0 & 0 & 0\\
            -1 & 0 & 0
        \end{bmatrix}.
    \end{align}
    We can assume that $P_1=[I|0]$ and $P_2=[I|(0;0;-1)]$. Then the following are the only options for $P_3$:
    \begin{align}
        P_3=\begin{bmatrix}
            1 & 0 & 0 & 0 \\
        a & 1+b & c & d\\
            0 & 0 & 1 & 0 
         \end{bmatrix},
    \end{align}
    for any $a,b,c,d$ such that $d\neq 0$. We get that     \begin{align}\label{eq: F23}
        F^{23}=\psi(P_2,P_3)=\begin{bmatrix}
            a & -1 & c+d \\
            b+1 & 0 & 0\\
            -d & 0 & 0
        \end{bmatrix}.
    \end{align}
    This matrix is rank-2 if and only if $d\neq 0$ or $b\neq -1$. Any such choice gives a triplet satisfying the triple-wise conditions. Also $F^{12},F^{13},S^{23}$ satisfy the triple-wise constraints, where
     \begin{align}
        S^{23}=\begin{bmatrix}
            0 & 0 & 0 \\
            0 & 0 & 0\\
            0 & 0 & 1
        \end{bmatrix},
    \end{align}
    because the epipole of a rank 1 matrix is 0.
    
    Now any arbitrarily small perturbance of $F^{12}$ and $F^{13}$ leads to an arbitrarily small change in the choice of $F^{23}$ from \Cref{eq: F23}. But no small perturbance of \Cref{eq: F23} equals $S^{23}$, which shows that $F^{12},F^{13},S^{23}$ does not lie in $\mathcal{V}_{K_3}$ and we are done. \end{proof}

\section{Conclusion} This paper provided explicit polynomial contraints as necessary and sufficient conditions for $n\choose 2$ fundamental matrices to be compatible. These polynomials were expressed in terms of the fundamental matrices and their epipoles, and are projectively well-defined, i.e. homogeneous. As a consequence of our work, the previously established necessary and sufficient condition \cite{kasten2019gpsfm} can be simplified by dropping the eigenvalue condition in certain cases. Our main tool was to define and use the fundamental action of sets of fundamental matrices. We gave a necessary and sufficient condition for compatibility that applied not only to complete graphs, but to any viewing graph. We used it to give an alternative derivation of necessary conditions for compatibility. In the final section, we introduced the viewing graph variety and gave a first result in the case of complete graphs.

{\small
\bibliographystyle{ieee_fullname}
\bibliography{VisionBib}

\begin{thebibliography}{10}\itemsep=-1pt

\bibitem{agarwal2019ideals}
Sameer Agarwal, Andrew Pryhuber, and Rekha~R Thomas.
\newblock Ideals of the multiview variety.
\newblock {\em IEEE transactions on pattern analysis and machine intelligence},
  2019.

\bibitem{aigner1999proofs}
Martin Aigner and G{\"u}nter~M Ziegler.
\newblock Proofs from the book.
\newblock {\em Berlin. Germany}, 1:2, 1999.

\bibitem{arrigoni2018bearing}
Federica Arrigoni and Andrea Fusiello.
\newblock Bearing-based network localizability: A unifying view.
\newblock {\em IEEE transactions on pattern analysis and machine intelligence},
  41(9):2049--2069, 2018.

\bibitem{arrigoni2021viewing}
Federica Arrigoni, Andrea Fusiello, Elisa Ricci, and Tomas Pajdla.
\newblock Viewing graph solvability via cycle consistency.
\newblock In {\em Proceedings of the IEEE/CVF International Conference on
  Computer Vision}, pages 5540--5549, 2021.

\bibitem{arrigoni2022revisiting}
Federica Arrigoni, Andrea Fusiello, Romeo Rizzi, Elisa Ricci, and Tomas Pajdla.
\newblock Revisiting viewing graph solvability: an effective approach based on
  cycle consistency.
\newblock {\em IEEE Transactions on Pattern Analysis and Machine Intelligence},
  2022.

\bibitem{braatelund2021threeviews}
Martin {Br{\r{a}}telund}.
\newblock {Critical configurations for three projective views}.
\newblock {\em arXiv e-prints}, page arXiv:2112.05478, Dec. 2021.

\bibitem{braatelund2021critical}
Martin Bråtelund.
\newblock Critical configurations for two projective views, a new approach.
\newblock {\em Journal of Symbolic Computation}, 120:102226, 2024.

\bibitem{cox1994ideals}
David Cox, John Little, Donal O'Shea, and Moss Sweedler.
\newblock Ideals, varieties, and algorithms.
\newblock {\em American Mathematical Monthly}, 101(6):582--586, 1994.

\bibitem{duff2019plmp}
Timothy Duff, Kathlen Kohn, Anton Leykin, and Tomas Pajdla.
\newblock Plmp-point-line minimal problems in complete multi-view visibility.
\newblock In {\em Proceedings of the IEEE/CVF International Conference on
  Computer Vision}, pages 1675--1684, 2019.

\bibitem{Gathmann}
Andreas Gathmann.
\newblock Algebraic geometry, 2019/20.
\newblock Class Notes TU Kaiserslautern. Available at
  \url{https://www.mathematik.uni-kl.de/~gathmann/de/alggeom.php}.

\bibitem{geifman2020averaging}
Amnon Geifman, Yoni Kasten, Meirav Galun, and Ronen Basri.
\newblock Averaging essential and fundamental matrices in collinear camera
  settings.
\newblock In {\em Proceedings of the IEEE/CVF Conference on Computer Vision and
  Pattern Recognition}, pages 6021--6030, 2020.

\bibitem{M2}
Daniel~R. Grayson and Michael~E. Stillman.
\newblock Macaulay2, a software system for research in algebraic geometry.
\newblock Available at \url{http://www.math.uiuc.edu/Macaulay2/}, 2020.

\bibitem{HK}
Richard Hartley and Fredrik Kahl.
\newblock Critical configurations for projective reconstruction from multiple
  views.
\newblock {\em International Journal of Computer Vision}, 71(1):5 -- 47, 01
  2007.

\bibitem{Hartley2004}
Richard~I. Hartley and Andrew Zisserman.
\newblock {\em Multiple View Geometry in Computer Vision}.
\newblock Cambridge University Press, ISBN: 0521540518, second edition, 2004.

\bibitem{heyden1997algebraic}
Anders Heyden and Kalle {\AA}str{\"o}m.
\newblock Algebraic properties of multilinear constraints.
\newblock {\em Mathematical Methods in the Applied Sciences},
  20(13):1135--1162, 1997.

\bibitem{kasten2019algebraic}
Yoni Kasten, Amnon Geifman, Meirav Galun, and Ronen Basri.
\newblock Algebraic characterization of essential matrices and their averaging
  in multiview settings.
\newblock In {\em Proceedings of the IEEE/CVF International Conference on
  Computer Vision}, pages 5895--5903, 2019.

\bibitem{kasten2019gpsfm}
Yoni Kasten, Amnon Geifman, Meirav Galun, and Ronen Basri.
\newblock Gpsfm: Global projective sfm using algebraic constraints on
  multi-view fundamental matrices.
\newblock In {\em Proceedings of the IEEE/CVF Conference on Computer Vision and
  Pattern Recognition}, pages 3264--3272, 2019.

\bibitem{kileel2022snapshot}
Joe Kileel and Kathl{\'e}n Kohn.
\newblock Snapshot of algebraic vision.
\newblock {\em arXiv preprint arXiv:2210.11443}, 2022.

\bibitem{levi2003viewing}
Noam Levi and Michael Werman.
\newblock The viewing graph.
\newblock In {\em 2003 IEEE Computer Society Conference on Computer Vision and
  Pattern Recognition, 2003. Proceedings.}, volume~1, pages I--I. IEEE, 2003.

\bibitem{martyushev2020necessary}
Evgeniy~V Martyushev.
\newblock Necessary and sufficient polynomial constraints on compatible
  triplets of essential matrices.
\newblock {\em International Journal of Computer Vision}, 128(12):2781--2793,
  2020.

\bibitem{michalek2021invitation}
Mateusz Micha{\l}ek and Bernd Sturmfels.
\newblock {\em Invitation to nonlinear algebra}, volume 211.
\newblock American Mathematical Soc., 2021.

\bibitem{nardi2011augmented}
Antonella Nardi, Dario Comanducci, and Carlo Colombo.
\newblock Augmented vision: Seeing beyond field of view and occlusions via
  uncalibrated visual transfer from multiple viewpoints.
\newblock In {\em 2011 Irish Machine Vision and Image Processing Conference},
  pages 38--44. IEEE, 2011.

\bibitem{ozyesil2015robust}
Onur Ozyesil and Amit Singer.
\newblock Robust camera location estimation by convex programming.
\newblock In {\em Proceedings of the IEEE Conference on Computer Vision and
  Pattern Recognition}, pages 2674--2683, 2015.

\bibitem{ranftl2018deep}
Ren{\'e} Ranftl and Vladlen Koltun.
\newblock Deep fundamental matrix estimation.
\newblock In {\em Proceedings of the European conference on computer vision
  (ECCV)}, pages 284--299, 2018.

\bibitem{rudi2011linear}
Alessandro Rudi, Matia Pizzoli, and Fiora Pirri.
\newblock Linear solvability in the viewing graph.
\newblock In Ron Kimmel, Reinhard Klette, and Akihiro Sugimoto, editors, {\em
  Computer Vision--ACCV 2010: 10th Asian Conference on Computer Vision,
  Queenstown, New Zealand, November 8-12, 2010, Revised Selected Papers, Part
  III 10}, pages 369--381, Berlin, Heidelberg, 2011. Springer Berlin
  Heidelberg.

\bibitem{sattler2016efficient}
Torsten Sattler, Bastian Leibe, and Leif Kobbelt.
\newblock Efficient \& effective prioritized matching for large-scale
  image-based localization.
\newblock {\em IEEE transactions on pattern analysis and machine intelligence},
  39(9):1744--1756, 2016.

\bibitem{falseclaim2}
Chris Sweeney, Torsten Sattler, Tobias Hollerer, Matthew Turk, and Marc
  Pollefeys.
\newblock Optimizing the viewing graph for structure-from-motion.
\newblock In {\em Proceedings of the IEEE International Conference on Computer
  Vision (ICCV)}, December 2015.

\bibitem{torr1997development}
Philip~HS Torr and David~William Murray.
\newblock The development and comparison of robust methods for estimating the
  fundamental matrix.
\newblock {\em International journal of computer vision}, 24:271--300, 1997.

\bibitem{torr1995motion}
Philip Hilaire~Sean Torr.
\newblock {\em Motion segmentation and outlier detection}.
\newblock PhD thesis, University of Oxford England, 1995.

\bibitem{trager2015joint}
Matthew Trager, Martial Hebert, and Jean Ponce.
\newblock The joint image handbook.
\newblock In {\em Proceedings of the IEEE international conference on computer
  vision}, pages 909--917, 2015.

\bibitem{trager2018solvability}
Matthew Trager, Brian Osserman, and Jean Ponce.
\newblock On the solvability of viewing graphs.
\newblock In {\em Proceedings of the European Conference on Computer Vision
  (ECCV)}, pages 321--335, 2018.

\bibitem{xu1996epipolar}
Gang Xu and Zhengyou Zhang.
\newblock {\em Epipolar geometry in stereo, motion and object recognition: a
  unified approach}, volume~6.
\newblock Springer Science \& Business Media, 1996.

\end{thebibliography}
}
\end{document}